\theoremstyle{plain}
\newtheorem{thm}{Theorem}[subsection]
\newtheorem{prop}[thm]{Proposition}
\newtheorem{lem}[thm]{Lemma}
\theoremstyle{definition}
\newtheorem{rem} [thm] {Remark}
\title{On the Cauchy problem for the  reaction-diffusion system with point-interaction in $\mathbb R^2$}
\date{}
\author[1]{Daniele Barbera}
\author[2,3,4]{Vladimir Georgiev}
\author[2]{Mario Rastrelli}
\affil[1]{Dipartimento di Scienze Matematiche
"Giuseppe Luigi Lagrange" \\
Politecnico di Torino \\
Corso Duca degli Abruzzi, 24
10129 Torino, Italy}  
\affil[2]{Dipartimento di Matematica  \\ Universit\`a di Pisa \\ Largo B. Pontecorvo 5, 56100 Pisa, Italy}
\affil[3]{Faculty of Science and Engineering \\ Waseda University \\
 3-4-1, Okubo, Shinjuku-ku, Tokyo 169-8555 \\
Japan}
\affil[4]{Institute of Mathematics and Informatics, Bulgarian Academy of Sciences, Acad.
Georgi Bonchev Str., Block 8, 1113 Sofia, Bulgaria}
\begin{document}

\maketitle

\abstract{The paper studies the existence of solutions for the reaction-diffusion equation in $\mathbb R^2$ with point-interaction laplacian $\Delta_\alpha$ with $\alpha\in(-\infty,+\infty]$, assuming the functions to remain on the absolute continuous projection space. By semigroup estimates, we get the existence and uniqueness of solutions on 
$$ L^\infty\left((0,T);H^1_\alpha\left(\mathbb R^2\right)\right)\cap L^r\left((0,T);H^{s+1}_\alpha\left(\mathbb R^2\right)\right), $$
with $r>2$, $s<\frac{2}{r}$ for the Cauchy problem with small $T>0$ or small initial conditions on $H^1_\alpha(\mathbb R^2)$. Finally, we prove decay in time of the functions.}

\vspace{2mm}

\textbf{Keywords:} Point-interaction laplacian, Reaction-diffusion equation, semigroup estimates

\vspace{2mm}

\textbf{2020 Mathematics Subject Classification: } 35K57, 35K45, 35A21

\section{Introduction}

\subsection{Physical and Mathematical Background}

Let us consider the system:
\begin{equation}\label{m.sys.0}
\left\{\begin{array}{ll}
     (\partial_t-\Delta_\alpha)u = a\cdot\nabla (|u|^\gamma) & (0,T)\times \mathbb R^2 \\
     u(0)=u_0 & \mathbb R^2,
\end{array}\right. 
\end{equation}
for some $T>0$, $a\in\mathbb R^2$ and $\gamma>1$. Here $u\colon (0,T)\times\mathbb R^2\to \mathbb R$, $u_0\colon \mathbb R^2\to \mathbb R$ and $\Delta_\alpha$ is the one-parameter family of self-adjoint extensions of the operator $\Delta_{|C^\infty_c(\mathbb R^2\setminus\{0\})}$ depending on $\alpha\in(-\infty,+\infty]$ (see Subsection \ref{subsec.Delta-alpha} for more details).

The standard convection-diffusion model, namely
\begin{equation}\label{cdm1}
\left\{\begin{array}{ll}
     (\partial_t-\Delta)u = a\cdot\nabla (|u|^\gamma) & (0,T)\times \mathbb R^2 \\
     u(0)=u_0 & \mathbb R^2,
\end{array}\right. 
\end{equation}
with $T>0$, $a\in\mathbb R^2$ and $\gamma>1$, is largely study to describe the morphogenesis of living beings, i.e. the process of shape formation of cells and tissues (see \cite{T52}, \cite{KM10}, \cite{BVE11}, \cite{MR12}). Moreover, it has been recently noticed the development of reaction-diffusion processes at nano-scales (see \cite{E16}). 

The goal of the present paper is to study diffusion models with point interactions. Point interactions are used to model the behaviour of electrons in quantum dots, which are semiconductor particles that confine electrons in three dimensions. These models help to understand the electronic properties and energy levels of quantum dots, which are crucial for applications in quantum computing and optoelectronics (see \cite{K20}). Moreover, point interactions can describe the diffusion of atoms or vacancies in a crystal lattice, particularly near defects or impurities. This is important for understanding the mechanical and thermal properties of materials, as well as for processes like doping in semiconductors (see \cite{Sir11}).

Rigorous mathematical introduction of point interaction was started with pioneering works \cite{AH81}, \cite{ABD95}, \cite{AGKH05}, where point interactions are modelled (in dimension $N=1,2,3$) as appropriate self - adjoint extensions of the Laplace operator defined on $C_c^\infty(\mathbb{R}^N\setminus \{0\})$. Recently, the main contributions in this field have been related with Schr\"odinger equations. This is, we have
\begin{enumerate}
    \item[a)] Study of perturbed Sobolev spaces $H^s_\alpha$ associated with the Laplace operator $\Delta_\alpha$ with point interaction (see \cite{GMR18}, \cite{GMS24}),
    \item[b)] Proof of existence of wave operators associated with linear Schr\"odinger group $e^{i\Delta_\alpha t},$ (see \cite{CMJ20}, \cite{CMJ20ER}, \cite{DMSY18}, \cite{CFN23}). Existence of local and global solutions to corresponding non-linear Schr\"odinger problem with local or non-local interaction (\cite{CFN23}, \cite{FGI22}, \cite{GR25}) . Blow up phenomena are studied in \cite{FN23}, \cite{Forcella2024},
    \item[c)] Analysis of ground states associated with non-linear Schr\"odinger equations (see \cite{FGI22}, \cite{GMS22}, \cite{GMS24}),
    \item[e)] Study of perturbed Sobolev spaces $H^{s,p}_\alpha$ associated with the Laplace operator $\Delta_\alpha$ with point interaction (see \cite{GR25}, \cite{GeorgievRastrelli2023}).
\end{enumerate}
In the study of point-type interactions for diffusion models, we have to take into account the fact that the operator $\Delta_\alpha$ has always point spectrum in dimension 2. 

In the case of the pure Laplace operator, the standard convection - diffusion equation \eqref{cdm1} is studied in \cite{EZ91}, \cite{Zuazua1993}, \cite{Zuazua2020}, \cite{Ku24}. The existence of Fujita-type exponents and asymptotic behaviour of the solution for large tine is studied in \cite{Zuazua1993}, \cite{Zuazua2020}, \cite{Ku24}. However, the presence of $\Delta_\alpha$ in the system \eqref{m.sys.0} makes some strategies of the cited papers hard to apply in our case. As an example, the global existence result is obtained in \cite{EZ91} by exploiting a maximum principle, which does not hold for the corresponding parabolic semigroup $e^{\Delta_\alpha t}$. Note that, even in the case
\begin{equation*}
    (\partial_t-\Delta+V(x))u = a\cdot\nabla \left(|u|^\gamma\right) ,
\end{equation*}
there is no deep understanding how the small data global existence results for the case $V=0$, can be extended to the case when
$-\Delta +V(x)$ has simple point spectrum. These observations show that much less is known about the convection diffusion equation associated with $\Delta_\alpha$ in dimension 2. It turns out to be a challenging problem that involves some new phenomena and results that are expected.

\subsection{Laplace with point interaction}\label{subsec.Delta-alpha}

Before stating the main theorems of the paper, we devote this subsection to explain the basic properties of the operators $\Delta_\alpha$, with $\alpha\in(-\infty,\infty]$ in dimension $N=2$ following the work of \cite{AGKH05}.

If we consider the operator $\Delta_{|C^\infty_c(\mathbb R^2\setminus\{0\})}$, then we can parametrize with $\alpha\in \mathbb{R}$ its non trivial self-adjoint extensions $\Delta_\alpha$. When $\alpha=\infty$, $\Delta_\alpha$ corresponds to the standard Laplacian, otherwise  the structure of the domain and the action are well-known.
If we fix $\alpha\in \mathbb{R}$ and consider $\lambda>0$ sufficiently big, then we can characterize the domain
\begin{equation}\label{dom.Lap.alpha}
    \mathcal{D}(\Delta_\alpha)=\left\{u\in L^2\left(\mathbb{R}^2\right)\:\Bigg|\:u=\phi_\lambda+\frac{\phi_\lambda(0)}{\alpha+c(\lambda)}G_\lambda, \ \phi_\lambda\in H^2\left(\mathbb{R}^2\right)\right\}
\end{equation}
and the action
\begin{equation}\label{eq.Domain}
    (\lambda-\Delta_\alpha)u=(\lambda-\Delta)\phi_\lambda.
\end{equation}
Here, $G_\lambda$ is the $L^2$-solution of  the Helmholtz equation with Dirac delta $(\lambda-\Delta)G_\lambda=\delta_0$, i.e.
\begin{equation}\label{eq.action}
    G_\lambda(x)=\frac{1}{2\pi}K_0\left(\sqrt{\lambda}|x|\right),
\end{equation}
where $K_0$ is the modified Bessel function of the second kind of order zero, and $c(\lambda)$ is a constant that represents its behaviour near zero, i.e.
    \begin{equation}
        c(\lambda)= \frac{\gamma-\ln{2}}{2\pi}+\frac{1}{2\pi}\ln{\sqrt{\lambda}},
    \end{equation}
with $\gamma\simeq0.577$ the Euler-Mascheroni constant. If we call with 
\begin{equation}\label{eigenvalue}
    E_\alpha= 4 e^{-4\pi \alpha -2\gamma}
\end{equation}
the solution of $\alpha+c(E_\alpha)=0,$ it can be proven that the domain \eqref{eq.Domain} and the action \eqref{eq.action} are independent on $\lambda\neq E_\alpha$. 
Otherwise $E_\alpha$ is the unique positive eigenvalue of $\Delta_\alpha$, with normalized eigenfunction
\begin{equation}\label{eigenfuntion}
    \psi_\alpha=\frac{G_{E_\alpha}}{\|G_{E_\alpha}\|_{L^2(\mathbb{R}^2)}}.
\end{equation}
Finally, the structure of the spectrum $\sigma(\Delta_\alpha)$ and also the resolvent formula are well-known: we have 
\begin{equation}
\begin{aligned}
    &\sigma(\Delta_\alpha)=\sigma_{ess}(\Delta_\alpha)\cup\sigma_p(\Delta_\alpha),&\begin{aligned}
        \sigma_{ess}(\Delta_\alpha)=(-\infty,0],\\
        \sigma_p(\Delta_\alpha)=\{E_\alpha\},
    \end{aligned}
\end{aligned}
\end{equation}
and, by the Krein's approach, 
\begin{equation}\label{eq.resolvent formula}
    (\lambda-\Delta_\alpha)^{-1}g=(\lambda-\Delta)^{-1}g+\frac{\langle g,G_\lambda\rangle}{\alpha+c(\lambda)}G_\lambda, 
\end{equation}
for every $\lambda\in \mathbb {R}^+\backslash\{E_\alpha\}$. Moreover, we can decompose $L^2$ with the two orthogonal projections 
\begin{equation}\label{proj.def.}
    \begin{aligned}
        &\begin{aligned}
            P_d:L^2(\mathbb{R}^N)&\to\operatorname{Span}\{\psi_\alpha\}\\
            \varphi&\mapsto\langle\varphi,\psi_\alpha\rangle_{L^2}\psi_\alpha,
        \end{aligned}
        &\begin{aligned}
            P_{ac}:L^2(\mathbb{R}^N)&\to\operatorname{Span}\{\psi_\alpha\}^{\bot}\\
            P_{ac}&=I-P_d.
        \end{aligned}
    \end{aligned}
\end{equation}
By Cauchy-Schwarz inequality, it is immediate to see that they are bounded. Moreover, $P_d$ is symmetric, so it is self-adjoint, and obviously commutes with the identity $I$, so $P_{ac}$ is self-adjoint too. 

Thanks to H\"older inequality and density argument, these projections can be defined on $L^p(\mathbb R^2)$ for every $p\in(1,\infty)$. So, for any $g\in L^p(\mathbb R^2)$, orthogonality gives  
$$ \|P_dg\|_{L^p(\mathbb R^2)}\le \|g\|_{L^p(\mathbb R^2)}, $$
$$ \|P_{ac}g\|_{L^p(\mathbb R^2)}\le \|g\|_{L^p(\mathbb R^2)}. $$
\begin{rem}\label{rem.Lap-alpha.dom.}
In \cite{GMS24}, fractional domains are studied. We write 
$$ H^s_\alpha\left(\mathbb R^N\right)=D\left(\left(\lambda-\Delta_\alpha\right)^{s/2}\right), $$ for $s\in[0,2]$ and $\lambda>E_\alpha$, with corresponding norm
\begin{equation}
    \|u\|_{H^s_\alpha(\mathbb{R}^2)}=\left\|(\lambda-\Delta_\alpha)^{s/2}u\right\|_{L^2(\mathbb{R}^2)}.
\end{equation}. More precisely, it can be seen the following characterization
    \begin{itemize}
        \item When $0\leq s<1$, then $H^s_\alpha(\mathbb R^2)=H^s(\mathbb R^2)$ for any $\alpha\in\mathbb R$ and the corresponding norms are equivalent;
        \item When  $s=1$, then
        $$ H^1_\alpha\left(\mathbb R^2\right)= \left\{u\in L^2\left(\mathbb R^2\right)\mid u=\phi_\lambda + cG_\lambda,\ \  \phi_\lambda\in H^1\left(\mathbb{R}^2\right), \:\:c\in\mathbb{C}\right\} $$
       and $$\|\phi_\lambda+cG_\lambda\|_{H^1_\alpha(\mathbb{R}^2)}^2\approx\|\phi_\lambda\|_{H^1(\mathbb{R}^2)}^2+|c|^2. $$
        \item When $1<s\leq2$, then
        $$ H^s_\alpha\left(\mathbb R^2\right)= \left\{u\in L^2\left(\mathbb R^2\right)\:\Bigg|\: u=\phi_\lambda + \frac{\phi_\lambda(0)}{\alpha+c(\lambda)}G_\lambda,\ \  \phi_\lambda\in H^s\left(\mathbb{R}^2\right)\right\}. $$
    \end{itemize}   

\end{rem}

For $N=3$ the definitions are similar: the domain 
\begin{equation}
    \mathcal{D}(\Delta_\alpha)=\left\{u\in L^2\left(\mathbb{R}^3\right)\:\Bigg|\:u=\phi_\lambda+\frac{\phi_\lambda(0)}{\alpha+c(\lambda)}G_\lambda, \ \phi_\lambda\in H^2\left(\mathbb{R}^3\right)\right\}
\end{equation}
and the action
\begin{equation}
    (\lambda-\Delta_\alpha)u=(\lambda-\Delta)\phi_\lambda
\end{equation}
are identical. However, in this case 
\begin{equation}
    G_\lambda(x)=\frac{e^{-\sqrt{\lambda}|x|}}{4\pi|x|}
\end{equation}
and 
\begin{equation}
    c(\lambda)=\frac{\sqrt{\lambda}}{4\pi}.
\end{equation}
The eigenvalue exists if and only if $\alpha<0$, it is positive and equal to 
\begin{equation}
    E_\alpha=(4\pi\alpha)^2.
\end{equation}
\begin{rem}\label{rem.Sob.emb.}
We notice that, due to Lemma 2.2 of \cite{GMS24}, the standard Sobolev embedding for $H^1(\mathbb R^2)$ still holds for $H^1_\alpha(\mathbb R^2)$, that is:
$$ H^1_\alpha\left(\mathbb R^2\right)\hookrightarrow L^q\left(\mathbb R^2\right) \quad \forall q\in[2,\infty). $$
\end{rem}

\subsection{Main results}

The aim of the paper is to find local and global solutions for the system 
\begin{equation}\label{m.sys.}
\left\{\begin{array}{ll}
     (\partial_t-\Delta_\alpha)u = a\cdot\nabla (|u|^\gamma) & (0,T)\times \mathbb R^2 \\
     u(0)=u_0 & \mathbb R^2,
\end{array}\right. 
\end{equation}
for some $T>0$, $a\in\mathbb R^2$ and $\gamma>1$. In particular, we want to find solutions in the energy space as in \cite{BG24} and \cite{BG25}, that is 
$$ L^\infty\left((0,T); H^1_\alpha\left(\mathbb R^2\right)\right)\cap L^2\left((0,T);H^2_\alpha\left(\mathbb R^2\right)\right),  $$
where we defined the spaces $H^s_\alpha$ in Remark \ref{rem.Lap-alpha.dom.}. Let us state the local existence result:
\begin{thm}\label{t.ex.loc.main}
Let $\alpha\in\mathbb R$, $a\in\mathbb R^2$, $\gamma >1$, let $u_0\in H^1_\alpha(\mathbb R^2)$, then there is $T>0$ such that the system \eqref{m.sys.} admits a unique solution $u$ such that
$$ u\in L^\infty\left((0,T); H^1_\alpha\left(\mathbb R^2\right)\right)\cap L^r\left((0,T);H^{s+1}_\alpha\left(\mathbb R^2\right)\right)\cap L^p\left((0,T);L^q\left(\mathbb R^2\right)\right) $$
for any  $r>2,$ $s\in\left(0,\frac{2}{r}\right)$, $p\ge 1$ and $q\in[2,\infty)$.
\end{thm}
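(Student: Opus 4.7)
The plan is to run a Banach fixed-point argument on the Duhamel formulation
\begin{equation*}
\Phi(u)(t) = e^{t\Delta_\alpha}u_0 + \int_0^t e^{(t-s)\Delta_\alpha}\,a\cdot\nabla\bigl(|u(s)|^\gamma\bigr)\,ds,
\end{equation*}
restricted to a closed ball $B_R$ of the ambient space
\begin{equation*}
X_T = L^\infty\bigl((0,T); H^1_\alpha(\mathbb R^2)\bigr) \cap L^r\bigl((0,T); H^{s+1}_\alpha(\mathbb R^2)\bigr),
\end{equation*}
with $R$ comparable to $\|u_0\|_{H^1_\alpha}$ and $T$ to be shrunk at the end. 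The $L^p((0,T);L^q)$ component in the statement is automatic once the $X_T$-norm is under control, thanks to the Sobolev embedding $H^1_\alpha\hookrightarrow L^q$ of Remark~\ref{rem.Sob.emb.} together with the finiteness of $T$.

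For the linear term $e^{t\Delta_\alpha}u_0$ I would exploit the orthogonal splitting $I = P_d + P_{ac}$ from \eqref{proj.def.}. On $\mathrm{Span}\{\psi_\alpha\}$ the semigroup acts as multiplication by $e^{tE_\alpha}$, which is harmless on $[0,T]$; on $P_{ac}L^2$ the generator is non-positive, so $e^{t\Delta_\alpha}$ is of heat type and enjoys fractional smoothing
\begin{equation*}
\|(\lambda-\Delta_\alpha)^{s/2} e^{t\Delta_\alpha}P_{ac}f\|_{L^2} \lesssim t^{-s/2}\|f\|_{L^2},\qquad s\in[0,2].
\end{equation*}
Since $t^{-s/2}\in L^r(0,T)$ precisely when $s<2/r$, which is the hypothesis of the theorem, this gives $\|e^{t\Delta_\alpha}u_0\|_{X_T}\lesssim \|u_0\|_{H^1_\alpha}$ with a constant bounded as $T\to 0^+$.

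The core of the argument is the Duhamel integral. Writing $F(u)=|u|^\gamma$ and using the elementary bound $\bigl||x|^\gamma-|y|^\gamma\bigr|\lesssim (|x|^{\gamma-1}+|y|^{\gamma-1})|x-y|$ together with Hölder's inequality and Remark~\ref{rem.Sob.emb.}, one obtains
\begin{equation*}
\|F(u)\|_{L^{p_0}}\lesssim \|u\|_{H^1_\alpha}^\gamma, \qquad \|F(u)-F(v)\|_{L^{p_0}}\lesssim \bigl(\|u\|_{H^1_\alpha}^{\gamma-1}+\|v\|_{H^1_\alpha}^{\gamma-1}\bigr)\|u-v\|_{H^1_\alpha},
\end{equation*}
for any $p_0\in(1,2]$. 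Combining this with the smoothing estimate $\|e^{t\Delta_\alpha}(a\cdot\nabla\phi)\|_{H^{s+1}_\alpha}\lesssim t^{-\beta}\|\phi\|_{L^{p_0}}$, where $\beta$ is dictated by the two-dimensional heat scaling and is kept strictly below $1$ by taking $p_0$ close to $1$, a time convolution via Young or Hardy-Littlewood-Sobolev yields
\begin{equation*}
\|\Phi(u)\|_{X_T}\le C\|u_0\|_{H^1_\alpha}+CT^{\theta}\|u\|_{X_T}^{\gamma},\qquad \|\Phi(u)-\Phi(v)\|_{X_T}\le CT^{\theta}\bigl(\|u\|_{X_T}^{\gamma-1}+\|v\|_{X_T}^{\gamma-1}\bigr)\|u-v\|_{X_T},
\end{equation*}
for some $\theta>0$. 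Setting $R=2C\|u_0\|_{H^1_\alpha}$ and choosing $T$ with $CT^{\theta}R^{\gamma-1}<1/2$ closes both self-mapping and contraction, producing the unique fixed point.

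The main obstacle I anticipate is precisely this nonlinear balancing. A naive estimate that puts $F(u)\in L^2$ and invokes two derivatives of semigroup smoothing produces the non-integrable kernel $(t-s)^{-1}$, so one has to genuinely exploit the divergence structure (keeping the gradient with the semigroup rather than with $F$) and sacrifice a sliver of Lebesgue integrability to recover an Abel kernel with exponent strictly less than one. The condition $s<2/r$ is used twice, once in the linear smoothing and once as the margin in the scale $H^{s+1}_\alpha$ that lets the time convolution close. Finally, the discrete eigencomponent associated with $E_\alpha$ is inoffensive on a finite interval, but would need to be isolated carefully if one aimed at a global-in-time statement.
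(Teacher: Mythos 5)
Your overall skeleton (Duhamel formula, contraction in $L^\infty_T H^1_\alpha\cap L^r_T H^{s+1}_\alpha$, linear term by spectral calculus within $P_{ac}$, the $L^p_TL^q$ component recovered from Remark \ref{rem.Sob.emb.} and the finiteness of $T$) is the same as the paper's, but the way you close the Duhamel term has a genuine gap. Your key estimate $\|e^{t\Delta_\alpha}(a\cdot\nabla\phi)\|_{H^{s+1}_\alpha}\lesssim t^{-\beta}\|\phi\|_{L^{p_0}}$ with ``$\beta<1$ by taking $p_0$ close to $1$'' has the exponents going the wrong way: two-dimensional heat scaling gives $\beta=\frac{s+1}{2}+\frac{1}{p_0}$ (one derivative under the semigroup, $s+1$ orders of smoothing to reach $H^{s+1}_\alpha$, and the $L^{p_0}\to L^2$ decay), so for every $p_0\in(1,2]$ you get $\beta\ge 1+\frac{s}{2}>1$; even the $H^1_\alpha$ component alone costs $\beta=\frac12+\frac1{p_0}\ge 1$. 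Since your nonlinear input is only bounded in $L^\infty_T L^{p_0}$, the time convolution against $(t-\tau)^{-\beta}$ with $\beta\ge1$ diverges, and shrinking $p_0$ towards $1$ makes the singularity worse, not better. To make $\beta<1$ you would need $p_0>\frac{2}{1-s}>2$ (which is still compatible with $\||u|^\gamma\|_{L^{p_0}}\lesssim\|u\|_{H^1_\alpha}^\gamma$), so the numerology is repairable, but not as written.

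The deeper, $\Delta_\alpha$-specific problem is that the asserted smoothing estimate with the derivative kept under the semigroup and measured in an $L^2$-based space is not among the paper's linear estimates and actually fails for $p_0\le 2$. By self-adjointness of $e^{t\Delta_\alpha}$, the bound $e^{t\Delta_\alpha}(a\cdot\nabla)\colon L^{p_0}\to L^2$ is dual to $\nabla e^{t\Delta_\alpha}\colon L^2\to L^{p_0'}$; but $e^{t\Delta_\alpha}\psi$ generically carries a $G_\lambda$-component whose gradient behaves like $|x|^{-1}$ near the origin, so $\nabla e^{t\Delta_\alpha}\psi$ lies in $L^h$ only for $h<2$, forcing $p_0'<2$, i.e.\ $p_0>2$ — and even in that range the needed dual gradient bound is not in the paper (Theorem \ref{t.sem-grad.es.} covers only $1<q<p<2$, never an $L^2$ target) and would have to be proved. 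This is precisely the obstruction the paper's proof is built around: it does \emph{not} move the derivative onto the semigroup, but differentiates the nonlinearity, splits $\nabla v=\nabla\phi_\lambda+q\nabla G_\lambda$ using Remark \ref{rem.Lap-alpha.dom.}, places the source in $L^r_TL^{\frac{2}{1+\varepsilon}}$ (spatial exponent slightly below $2$, exactly so that $\nabla G_\lambda$ is admissible via H\"older), and then the Duhamel kernel is $(t-\tau)^{-\frac{1+s+\varepsilon}{2}}$, integrable because $s<\frac2r<1$ (Lemmas \ref{l.lin.es.loc.ex.1}--\ref{l.lin.es.loc.ex.2} and Proposition \ref{p.lin.loc.es.}). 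Your argument needs either this alternative bookkeeping or a new perturbed estimate $e^{t\Delta_\alpha}(a\cdot\nabla)\colon L^{p_0}\to L^2$ for large $p_0$, which you have asserted but not established.
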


Here below we present some difficulties and new tools used in establishing this result.

\begin{enumerate}
    \item[i)] We note that the lack of maximum principle for the linear  equation
    \begin{equation}
        (\partial_t - P_{ac}\Delta_\alpha)u =0
    \end{equation}
    is an essential obstacle to obtain a local existence result with initial data in space of type $L^\infty.$ In fact, we can not apply the maximum principle argument as in \cite{EZ91}. Note   that the inclusion $H^s_\alpha(\mathbb{R}^2) \subseteq L^\infty(\mathbb{R}^2)$ for $s \in (1,2]$ is not valid in the perturbed Sobolev space. However, the possibility to apply appropriate decay estimates for Laplace operator with point interaction is an important new tool to overcome these difficulties.
    \item[ii)]  A standard application of the energy method for \eqref{m.sys.} meets some difficulties, due to the decomposition in \eqref{dom.Lap.alpha}. For this reason, we choose a solution space in a form weaker than the classical one for the  free Laplacian:
$$ L^r\left(\mathbb R_+;H^{s_r+1}_\alpha\left(\mathbb R^2\right)\right)\subseteq L^\infty\left(\mathbb R_+;H^1_\alpha\left(\mathbb R^2\right)\right)\cap L^2\left(\mathbb R_+;H^2_\alpha\left(\mathbb R^2\right)\right) \ \ \forall r>2, $$
where $s_r=\frac{2}{r}$. In fact, we use semigroup estimates to bound directly the terms of the Duhamel Formula corresponding to the solution for \eqref{m.sys.}, loosing the endpoint estimate in $L^2((0,T);H^2_\alpha(\mathbb R^2))$. 
\end{enumerate}

Next,  we turn to the global existence problem. This is even harder: the operator $\Delta_\alpha$ admits a positive eigenvalue so the semigroup $e^{\Delta_\alpha t}u_0$ has an exponential growth in time. For this reason, we consider the system 
\begin{equation}\label{m.sys.proj.}
    \left\{\begin{array}{ll}
    (\partial_t- P_{ac}\Delta_\alpha)u= P_{ac}(a\cdot\nabla)(|u|^\gamma) & \mathbb R_+\times\mathbb R^2 \\
    u(0)=P_{ac}u_0 & \mathbb R^2,
\end{array}\right. 
\end{equation}
where the projection $P_{ac}$ is defined in \eqref{proj.def.}. 
\begin{thm}\label{t.ex.gl.m.}
Let $\alpha\in\mathbb R$, $a\in\mathbb R^2$ and $\gamma >1$, then there is $\varepsilon_0>0$ such that, for any $\varepsilon\le \varepsilon_0$ and $u_0\in H^1_\alpha(\mathbb R^2)\cap L^1(\mathbb R^2)$ with
$$ \|u_0\|_{H^1_\alpha\cap L^1(\mathbb R^2)}\le \varepsilon, $$
the system \eqref{m.sys.proj.} admits a unique solution $u$ such that
$$ u\in L^\infty\left(\mathbb R_+; H^1_\alpha\left(\mathbb R^2\right)\right)\cap L^r\left(\mathbb R_+;H^{s+1}_\alpha\left(\mathbb R^2\right)\right)\cap L^p\left(\mathbb R_+;L^q\left(\mathbb R^2\right)\right) $$
for any  $r>2,$ $s\in\left(0,\frac{2}{r}\right)$, and for any $p\ge 1$ and $q\in[2,\infty)$ with $\frac{1}{p}+\frac{1}{q}<1$.
\end{thm}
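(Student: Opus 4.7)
The plan is to pass to the Duhamel formulation
$$ u(t) = e^{tP_{ac}\Delta_\alpha} P_{ac} u_0 + \int_0^t e^{(t-s)P_{ac}\Delta_\alpha} P_{ac}(a\cdot\nabla)(|u(s)|^\gamma)\, ds $$
and run a contraction-mapping argument globally in time on a small ball of a Banach space of the form
$$ X = L^\infty\!\left(\mathbb R_+; H^1_\alpha\right)\cap L^r\!\left(\mathbb R_+; H^{s+1}_\alpha\right)\cap \bigcap_{(p,q)} L^p\!\left(\mathbb R_+; L^q\right), $$
where the intersection runs over the admissible pairs prescribed in the statement. The crucial structural input is that $P_{ac}$ removes the positive eigenvalue $E_\alpha$, so on its range the operator $-P_{ac}\Delta_\alpha$ has spectrum in $[0,\infty)$ and the semigroup $e^{tP_{ac}\Delta_\alpha}P_{ac}$ enjoys heat-type decay uniformly for all $t>0$, in contrast with the full $e^{t\Delta_\alpha}$.

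The first step is to assemble the needed linear estimates for the projected semigroup. Using the resolvent formula \eqref{eq.resolvent formula} and the spectral decomposition of $\Delta_\alpha$ on the absolutely continuous part, I would establish bounds of heat-kernel type, namely
$$ \bigl\|e^{tP_{ac}\Delta_\alpha}P_{ac}f\bigr\|_{L^q(\mathbb R^2)} \lesssim t^{-(1/p-1/q)}\|f\|_{L^p(\mathbb R^2)}, \qquad 1\leq p\leq q\leq\infty,\ t>0, $$
together with the analogous gain of fractional $H^{s+1}_\alpha$-regularity. Combined with Remark \ref{rem.Sob.emb.}, these estimates control the free term $e^{tP_{ac}\Delta_\alpha}P_{ac}u_0$ in the $X$-norm by $\|u_0\|_{H^1_\alpha\cap L^1}$: the $L^p(L^q)$ components require integrability of $t^{-p(1-1/q)}$ near infinity, which is precisely the condition $\tfrac{1}{p}+\tfrac{1}{q}<1$ appearing in the statement, and here the assumption $u_0\in L^1$ is used.

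Next I would estimate the nonlinear Duhamel term. Moving one derivative onto the semigroup kernel, the task becomes bounding $\||u|^\gamma\|$ in suitable space-time Lebesgue norms. Splitting $|u|^\gamma = |u|\cdot|u|^{\gamma-1}$ by Hölder, the first factor is absorbed by $L^\infty_t L^{q_1}_x$ via Sobolev embedding of $H^1_\alpha$, and the remaining factor is paired with the $L^p_t L^q_x$ norms available in $X$. Choosing the exponents so that the resulting temporal decay matches the targets in $X$ yields a quadratic (more precisely, $\gamma$-th order) closed estimate
$$ \|\Phi(u)\|_X \leq C_1\|u_0\|_{H^1_\alpha\cap L^1} + C_2\|u\|_X^\gamma, \qquad \|\Phi(u)-\Phi(v)\|_X \leq C_3\bigl(\|u\|_X^{\gamma-1}+\|v\|_X^{\gamma-1}\bigr)\|u-v\|_X, $$
where $\Phi$ is the Duhamel map. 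Taking $\varepsilon_0$ so small that $C_2(2C_1\varepsilon_0)^{\gamma-1}<\tfrac{1}{2}$ then produces a unique fixed point in the ball $\{\|u\|_X\leq 2C_1\varepsilon\}$, which is the required global solution.

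The main obstacle, in my view, is twofold. First, the uniform-in-$t$ heat-type decay of $e^{tP_{ac}\Delta_\alpha}P_{ac}$ is not a direct consequence of the classical Gauss kernel bounds, because the domain of $\Delta_\alpha$ contains the singular profile $G_\lambda$ and the action \eqref{eq.action} mixes regular and singular parts; one must genuinely use the subtraction of the eigenprojection together with the Krein-type resolvent identity to write the semigroup as a spectral integral over $(-\infty,0]$ and extract the polynomial decay. Second, the balance between the admissibility condition $\tfrac{1}{p}+\tfrac{1}{q}<1$, the scaling of the nonlinearity $|u|^\gamma$ with $\gamma>1$ arbitrary, and the need to land back in every component of the $X$-norm is delicate: it may be necessary to first close the contraction on a restricted sub-range of admissible pairs and then upgrade to the full range by persistence of integrability, using once more the smoothing of the projected semigroup.
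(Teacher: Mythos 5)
Your overall strategy (Duhamel formula for the projected semigroup plus a global small-data contraction in a mixed space--time norm) is the same as the paper's, but two steps as written would fail. The first is the linear estimate you assert, $\|e^{tP_{ac}\Delta_\alpha}P_{ac}f\|_{L^q(\mathbb R^2)}\lesssim t^{-(1/p-1/q)}\|f\|_{L^p(\mathbb R^2)}$ for all $1\le p\le q\le\infty$: the endpoints are not available here. The decay is obtained (Theorem \ref{t.expDelta alpha}) from the Krein resolvent formula \eqref{eq.resolvent formula}, and the correction term involves the pairing $\langle f,G_\lambda\rangle\,G_\lambda$ with $G_\lambda\sim-\log|x|$ near the origin; since $G_\lambda\notin L^\infty(\mathbb R^2)$, neither the source exponent $1$ nor the target exponent $\infty$ can be reached, and the estimate also fails on the diagonal $p=q$ uniformly in time (Remark \ref{rem.L2.sem.es.}). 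The paper works instead with the near-endpoint exponents $L^{1/(1-\varepsilon)}$ and $L^{2/(1+\varepsilon)}$ together with the norm $\|u_0\|_{L^1\cap H^1_\alpha}$ (Lemmas \ref{l.lin.es.gl.ex.1} and \ref{l.lin.es.gl.ex.2}, Proposition \ref{p.gl.lin.es.}); your argument can be repaired in the same way, so this gap is fixable but must be addressed explicitly, since the condition $\tfrac1p+\tfrac1q<1$ only buys the time integrability once the $\varepsilon$-loss is accounted for.

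The second gap is more serious: your plan to ``move one derivative onto the semigroup kernel'' so that only $\||u|^\gamma\|$ needs to be bounded cannot close the $L^\infty H^1_\alpha$ and $L^r H^{s+1}_\alpha$ components of the norm. The gradient bounds for the perturbed semigroup (Theorem \ref{t.sem-grad.es.}) hold only with target exponent strictly below $2$, because $\nabla G_\lambda\in L^p(\mathbb R^2)$ if and only if $p<2$; dualizing, operators of the form $e^{t\Delta_\alpha}P_{ac}\,\partial_j$ map $L^m\to L^n$ only for $2<m<n<\infty$, so they can never land in the $L^2$-based spaces $H^1_\alpha$, $H^{s+1}_\alpha$ that your space $X$ requires. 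If, alternatively, you keep the derivative on the nonlinearity, you must confront the decomposition $\nabla v=\nabla\phi_\lambda+q\,\nabla G_\lambda$ of Remark \ref{rem.Lap-alpha.dom.}, with $\nabla G_\lambda\notin L^2(\mathbb R^2)$, which is precisely why the paper measures the source in $L^rL^{2/(1+\varepsilon)}\cap L^1L^{1/(1-\varepsilon)}$ and why the auxiliary pairs $(p_1,q_1)$, $(p_2,q_2)$ in the contraction space are chosen as explicit functions of $\gamma$, $r$ and $\varepsilon$ (with an attendant restriction on the admissible $r$ for small $\gamma$). This choice, and the verification that the resulting Hölder exponents are compatible, is the heart of the paper's proof of Theorem \ref{t.ex.gl.m.}; the heat-kernel-plus-Hölder bookkeeping you sketch does not substitute for it.
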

If we want to consider the problem \eqref{m.sys.proj.} without the projection $P_{ac}$, we can introduce a function $\rho\colon\mathbb R_+\to\mathbb R$ and study the system 
\begin{equation}\label{m.sys.proj.2}
    \left\{\begin{array}{ll}
    (\partial_t- \Delta_\alpha)u + \rho(t)\psi_\alpha=a\cdot\nabla(|u|^\gamma) & \mathbb R_+\times\mathbb R^2 \\
    u(0)=P_{ac}u_0 & \mathbb R^2,
\end{array}\right. 
\end{equation}
where $\psi_\alpha$ is defined in \eqref{eigenfuntion}. Then we have the following result:

\begin{thm}\label{t.ex.gl.m.2}
Let $\alpha\in\mathbb R$, $a\in\mathbb R^2$ and $\gamma >1$, then there is $\varepsilon_0>0$ such that, for any $\varepsilon\le \varepsilon_0$ and $u_0\in H^1_\alpha(\mathbb R^2)\cap L^1(\mathbb R^2)$ with
$$ \|u_0\|_{H^1_\alpha\cap L^1(\mathbb R^2)}\le \varepsilon, $$
the system \eqref{m.sys.proj.2} admits a unique solution $(u,\rho)$ such that
$$ u\in L^\infty\left(\mathbb R_+; H^1_\alpha\left(\mathbb R^2\right)\right)\cap L^r\left(\mathbb R_+;H^{s+1}_\alpha\left(\mathbb R^2\right)\right)\cap L^p\left(\mathbb R_+;L^q\left(\mathbb R^2\right)\right),\quad \rho\in L^1\cap L^r\left(\mathbb R_+\right), $$
for any $r>2$, $s\in\left(0,\frac{2}{r}\right)$, and for any $p\ge 1$ and $q\in[2,\infty)$ with $\frac{1}{p}+\frac{1}{q}<1$. In particular,
$$ \rho(t)=\left<a\cdot \nabla(|u|^\gamma),\psi_\alpha\right>_{L^2(\mathbb R^2)}. $$
\end{thm}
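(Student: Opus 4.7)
The strategy is to deduce Theorem \ref{t.ex.gl.m.2} directly from Theorem \ref{t.ex.gl.m.} by exploiting the orthogonal decomposition $L^2(\mathbb R^2) = P_d(L^2) \oplus P_{ac}(L^2)$. The key observation is the following: if one applies the projector $P_{ac}$ to equation \eqref{m.sys.proj.2} and uses $P_{ac}\psi_\alpha = 0$ together with the fact that $P_{ac}$ commutes with $\Delta_\alpha$ on the relevant domain (both spectral projectors associated with the self-adjoint $\Delta_\alpha$), one recovers exactly \eqref{m.sys.proj.}. Hence a solution $(u,\rho)$ of \eqref{m.sys.proj.2} with $u(0)=P_{ac}u_0$ forces $u$ to solve \eqref{m.sys.proj.}, and conversely a solution $u$ of \eqref{m.sys.proj.} determines $\rho$ through the eigenfunction projection of the nonlinearity.

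Concretely, I would first invoke Theorem \ref{t.ex.gl.m.} with initial datum $P_{ac}u_0$ (which still satisfies the smallness in $H^1_\alpha\cap L^1$ since $P_{ac}$ is bounded on both spaces). This yields a unique $u$ in the required function spaces solving \eqref{m.sys.proj.}. Since the flow preserves the range of $P_{ac}$, we have $\langle u(t),\psi_\alpha\rangle_{L^2}=0$ for all $t$, so $\Delta_\alpha u(t)=P_{ac}\Delta_\alpha u(t)$. Then I would define
$$\rho(t) \Def \bigl\langle a\cdot\nabla\bigl(|u|^\gamma\bigr),\psi_\alpha\bigr\rangle_{L^2(\mathbb R^2)},$$
so that $\rho(t)\psi_\alpha = P_d\bigl(a\cdot\nabla(|u|^\gamma)\bigr) = a\cdot\nabla(|u|^\gamma) - P_{ac}\bigl(a\cdot\nabla(|u|^\gamma)\bigr)$. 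Substituting into \eqref{m.sys.proj.} shows that $(u,\rho)$ satisfies \eqref{m.sys.proj.2}.

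It remains to establish $\rho \in L^1\cap L^r(\mathbb R_+)$ and to prove uniqueness. For the integrability, by H\"older and the chain rule $\nabla(|u|^\gamma)=\gamma|u|^{\gamma-1}\mathrm{sgn}(u)\nabla u$ one obtains pointwise in time estimates of the form
$$|\rho(t)| \lesssim \|\psi_\alpha\|_{L^{q_1}}\,\|u(t)\|_{L^{q_2}}^{\gamma-1}\,\|\nabla u(t)\|_{L^{q_3}},$$
with $1/q_1+(\gamma-1)/q_2+1/q_3 = 1$. Choosing $q_3=2$ and balancing $q_1,q_2$ using Remark \ref{rem.Sob.emb.} (and the fact that $\psi_\alpha\in L^p$ for all $p\in(1,\infty)$) one bounds $|\rho(t)|\lesssim \|u(t)\|_{H^1_\alpha}\,\|u(t)\|_{L^{q_2}}^{\gamma-1}$, and then the $L^r$ and $L^1$ integrability in time follow by combining the $L^\infty(H^1_\alpha)$ bound with the $L^p(L^q)$ bounds of $u$ in the range $1/p+1/q<1$ granted by Theorem \ref{t.ex.gl.m.}. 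Uniqueness follows from the same reduction: given two solutions $(u_1,\rho_1),(u_2,\rho_2)$ of \eqref{m.sys.proj.2}, applying $P_{ac}$ annihilates the $\rho_i\psi_\alpha$ terms, so both $u_i$ solve \eqref{m.sys.proj.} and coincide by the uniqueness part of Theorem \ref{t.ex.gl.m.}; then the explicit formula forces $\rho_1=\rho_2$.

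The main obstacle is to calibrate the H\"older exponents so that \emph{both} the $L^r$ and the $L^1$ integrability of $\rho$ fit within the admissible range $1/p+1/q<1$ provided by Theorem \ref{t.ex.gl.m.}. The $L^1$ condition in time is the delicate one: it requires enough global-in-time integrability of $u$, which is precisely what the additional smallness hypothesis $u_0\in L^1$ was designed to provide via the improved decay of the projected semigroup $e^{tP_{ac}\Delta_\alpha}$.
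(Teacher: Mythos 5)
Your overall reduction is the paper's own route: existence for \eqref{m.sys.proj.} by Theorem \ref{t.ex.gl.m.}, then $\rho(t)=\langle a\cdot\nabla(|u|^\gamma),\psi_\alpha\rangle_{L^2}$ via the Lagrange-multiplier equivalence (Proposition \ref{p.ex.Lagr.sys.} with $f=a\cdot\nabla(|u|^\gamma)$). The genuine gap is in the quantitative step $\rho\in L^1\cap L^r(\mathbb R_+)$. Your H\"older bound takes $q_3=2$, i.e.\ it uses $\|\nabla u(t)\|_{L^2(\mathbb R^2)}$ and then controls it by $\|u(t)\|_{H^1_\alpha}$; this is false in the perturbed scale. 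Indeed $u(t)=\phi_\lambda(t)+q(t)G_\lambda$ with $\nabla G_\lambda\sim|x|^{-1}$ near the origin, so $\nabla G_\lambda\in L^h(\mathbb R^2)$ only for $h\in[1,2)$, and $\|\nabla u(t)\|_{L^2}=\infty$ whenever $q(t)\neq0$ — this is precisely the obstruction the paper highlights (it is why the linear theory is built for forcings in $L^rL^{\frac{2}{1+\varepsilon}}\cap L^1L^{\frac{1}{1-\varepsilon}}$ rather than $L^2$-based spaces). The repair requires splitting $\nabla u=\nabla\phi_\lambda+q(t)\nabla G_\lambda$, placing the singular part in $L^h$ with $h<2$, and controlling $|q(t)|$ and $\|\nabla\phi_\lambda(t)\|_{L^2}$ in time through the $L^rH^{s+1}_\alpha$ norm; this is exactly what estimates \eqref{gl.es.1}--\eqref{gl.es.2} accomplish, giving $\|\rho\|_{L^1\cap L^r}\lesssim\|a\cdot\nabla(|u|^\gamma)\|_{L^1L^{\frac{1}{1-\varepsilon}}\cap L^rL^{\frac{2}{1+\varepsilon}}}\lesssim\|u\|_Y^\gamma$. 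Moreover the $L^1$-in-time calibration you flag as "the main obstacle" really cannot be closed from the $L^\infty H^1_\alpha$ and $L^pL^q$ ($\frac1p+\frac1q<1$) bounds alone: for instance with $\gamma=2$ your bound would require $u\in L^1L^{q_2}$, which lies outside that admissible range. The time integrability must come from the intermediate-regularity space, i.e.\ from $\|\nabla\phi_\lambda\|_{L^rL^2}\lesssim\|u\|_{L^rH^{s+1}_\alpha}$, not from the $L^pL^q$ family.

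A second, smaller gap is in uniqueness. Applying $P_{ac}$ to \eqref{m.sys.proj.2} yields an equation for $P_{ac}u_i$ whose nonlinearity is still evaluated at the full $u_i$; to conclude that $u_i$ solves \eqref{m.sys.proj.} you must first know $P_du_i\equiv0$. For an arbitrary pair $(u_i,\rho_i)$ in the stated class this is not automatic: the discrete component $y(t)=\langle u_i(t),\psi_\alpha\rangle$ satisfies $y'=E_\alpha y+\langle a\cdot\nabla(|u_i|^\gamma),\psi_\alpha\rangle-\rho_i$ with $y(0)=0$, which need not vanish unless $\rho_i$ already equals the bracket. The paper sidesteps this by imposing the constraint $P_{ac}u=u$ in the system it actually solves (see \eqref{sys.gl.2} and the uniqueness part of Proposition \ref{p.ex.Lagr.sys.}); you should either impose that constraint as well or add an argument excluding nonzero discrete components within the stated solution class.
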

The presence of $\rho=\rho(u)$ can be seen also from another point of view: let $u$ be a solution of \eqref{m.sys.proj.2}, then we show that the orthogonality condition
$$ \langle u(t), \psi_\alpha  \rangle_{L^2} $$
is preserved under the nonlinear flow defined in 
$$ (\partial_t- \Delta_\alpha)u + \rho(u)(t)\psi_\alpha=a\cdot\nabla(|u|^\gamma)  \mathbb R_+\times\mathbb R^2. $$
Therefore, $(u,\rho)$ solves
$$ \left\{\begin{array}{ll}
    (\partial_t- \Delta_\alpha)u + \rho(t)\psi_\alpha=a\cdot\nabla(|u|^\gamma) & \mathbb R_+\times\mathbb R^2 \\
    P_{ac}u=u & \mathbb R_+\times\mathbb R^2 \\
    u(0)=P_{ac}u_0 & \mathbb R^2.
\end{array}\right. $$
Therefore, $\rho(t)$ is the Lagrange multiplier associated with the constraint $P_{ac}u=u$. Similarly, any solution $u$ of \eqref{m.sys.proj.} satisfies the property $P_{ac}u=u$. So, to proved the global existence for our problem, we consider the systems \eqref{m.sys.proj.} and \eqref{m.sys.proj.2} to neglect the part of the solution associated with the positive eigenvalue of $\Delta_\alpha$. 

To conclude, we can prove a polynomial decay in time of the solutions:
\begin{thm}\label{t.decay.m.}
Let $(u,\rho)$ be the solution from Theorem \ref{t.ex.gl.m.2}, then for any $h_1\in(1,\infty)$ and $h_2\in(1,2)$ for which there are $\theta_1,\theta_2\in\left(\frac{1}{2},1\right)$ with
$$ \theta_1+\theta_2>\frac{3}{2},\quad \max\left\{\frac{1}{h_1},\frac{1}{h_2}\right\}<\frac{\theta_1}{h_1}+\frac{\theta_2}{h_2}+\frac{1-\theta_2}{2}<1, $$
there is $\delta_0>0$ such that, for any $t\ge 1$ and for any $\delta\le \delta_0$, it holds
$$ \|u(t)\|_{L^{h_1}(\mathbb R^2)}\lesssim t^{-1+\frac{1}{h_1}+\delta}, $$
$$ \|\nabla u(t)\|_{L^{h_2}(\mathbb R^2)}\lesssim t^{-\frac{3}{2}+\frac{1}{h_2}+\delta}, $$
$$ |\rho(t)|\lesssim t^{-1-\delta}. $$
\end{thm}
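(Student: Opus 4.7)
The plan is to close a bootstrap argument on the Duhamel representation of $u$, using heat-type decay estimates for the restricted semigroup $e^{tP_{ac}\Delta_\alpha}$, and then to deduce the bound on $\rho$ directly from its explicit expression as an $L^2$ pairing with $\psi_\alpha$.

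Since $P_{ac}\psi_\alpha=0$ and $P_{ac}u=u$ along any solution of \eqref{m.sys.proj.2}, I would project the equation onto the absolutely continuous subspace, obtaining
$$ u(t)=e^{tP_{ac}\Delta_\alpha}P_{ac}u_0+\int_0^t e^{(t-s)P_{ac}\Delta_\alpha}P_{ac}\bigl(a\cdot\nabla(|u|^\gamma)\bigr)\,ds. $$
The key analytic input is a family of heat-kernel type estimates of the form
$$ \|e^{tP_{ac}\Delta_\alpha}f\|_{L^q}\lesssim t^{-(1/p-1/q)}\|f\|_{L^p},\qquad \|\nabla e^{tP_{ac}\Delta_\alpha}f\|_{L^q}\lesssim t^{-1/2-(1/p-1/q)}\|f\|_{L^p}, $$
valid globally in $t$ because the positive eigenvalue of $\Delta_\alpha$ has been removed on the range of $P_{ac}$. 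These should be established (or adapted) using the resolvent identity \eqref{eq.resolvent formula}; any residual logarithmic loss from the point interaction is to be absorbed in the arbitrarily small parameter $\delta$ appearing in the statement.

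For the bootstrap, set
$$ A(T):=\sup_{1\le t\le T}\Bigl(t^{1-1/h_1-\delta}\|u(t)\|_{L^{h_1}}+t^{3/2-1/h_2-\delta}\|\nabla u(t)\|_{L^{h_2}}\Bigr), $$
assume inductively $A(T)\le 2C\varepsilon$, and plug this back into the Duhamel formula after splitting $\nabla(|u|^\gamma)=\gamma|u|^{\gamma-1}\nabla u$. Using Hölder together with Gagliardo--Nirenberg type interpolations of the form
$$ \|u\|_{L^{r_j}}\lesssim \|u\|_{H^1_\alpha}^{1-\theta_j}\|u\|_{L^{h_j}}^{\theta_j},\qquad j=1,2, $$
with $\theta_j\in(1/2,1)$ --- which is exactly where the parameters $\theta_1,\theta_2$ enter --- reduces the Duhamel integral to controlling an expression of the shape $\int_0^t(t-s)^{-a}s^{-b}\,ds$, with $a,b$ determined by the combined scaling of $|u|^{\gamma-1}\nabla u$ across the two interpolations. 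The two conditions $\theta_1+\theta_2>3/2$ and $\max\{1/h_1,1/h_2\}<\theta_1/h_1+\theta_2/h_2+(1-\theta_2)/2<1$ are precisely what guarantees $a,b<1$ and that the resulting $t^{1-a-b}$ matches the prescribed rates $t^{-1+1/h_1+\delta}$ and $t^{-3/2+1/h_2+\delta}$, so the integral closes. Smallness of $\varepsilon$ (equivalently, $\delta_0$) then forces $A(T)\le C\varepsilon$ uniformly in $T$.

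The bound on $\rho$ follows from an integration by parts against $\psi_\alpha$,
$$ \rho(t)=\langle a\cdot\nabla(|u|^\gamma),\psi_\alpha\rangle_{L^2}=-\int_{\mathbb R^2}|u(t)|^\gamma\,a\cdot\nabla\psi_\alpha\,dx. $$
Since $\psi_\alpha\propto K_0(\sqrt{E_\alpha}|\cdot|)$, one has $\nabla\psi_\alpha\in L^{p'}(\mathbb R^2)$ for every $p'\in[1,2)$, thanks to the $1/|x|$ singularity of $K_1$ at the origin and its exponential decay at infinity; Hölder with the conjugate $p>2$ gives $|\rho(t)|\lesssim\|u(t)\|_{L^{p\gamma}}^\gamma\sim t^{-\gamma+1/p+O(\delta)}$ by the first inequality already proven, and for any $\gamma>1$ one then chooses $p$ sufficiently large and $\delta_0$ sufficiently small so that the exponent is at most $-1-\delta$. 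The main obstacle is the simultaneous calibration of $\theta_1,\theta_2,h_1,h_2$ and the Hölder exponents so that the Duhamel time integrals converge, the resulting powers of $t$ match the target decay, and the algebraic constraints in the statement are genuinely sufficient; a secondary but non-trivial point is establishing the two-parameter semigroup estimates for $e^{tP_{ac}\Delta_\alpha}$ above, which do not reduce to the free Gaussian kernel and require dedicated analysis through \eqref{eq.resolvent formula}.
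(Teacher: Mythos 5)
Your argument for the decay of $u$ and $\nabla u$ is essentially the paper's: you project with $P_{ac}$, write the Duhamel formula for $e^{tP_{ac}\Delta_\alpha}$, invoke global-in-time $L^p$--$L^q$ and gradient decay estimates for the projected semigroup (these are exactly Theorems \ref{t.expDelta alpha} and \ref{t.sem-grad.es.}, already proved via the Dunford/resolvent representation, with no logarithmic loss but with the restriction that the target exponent be below $2$ for the gradient estimate --- harmless since $h_2<2$), and close a smallness argument in the weighted-in-time norm, splitting $|u|^{\gamma-1}\nabla u$ by H\"older so that the decaying norms enter with powers $\theta_1,\theta_2$ and the energy norms carry the rest; the paper does the same thing, except that it builds the weighted seminorm $|\cdot|_{D_{h_1,h_2,\delta}}$ into the fixed-point space (Lemmas \ref{l.dec.es.1}, \ref{l.dec.es.2} and the technical convolution Lemma \ref{l.tec.}) rather than running a continuity/bootstrap argument, a purely cosmetic difference. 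The genuine divergence is the bound on $\rho$: you integrate by parts, $\rho(t)=-\int |u|^\gamma\, a\cdot\nabla\psi_\alpha\,dx$, use $\nabla\psi_\alpha\in L^{p'}(\mathbb R^2)$ for $p'<2$ (correct, from the $K_1$-type singularity of $\nabla G_{E_\alpha}$), and conclude from the decay of $\|u(t)\|_{L^{p\gamma}}$ alone; the paper instead bounds $|\rho(t)|\le\|a\cdot\nabla(|u|^\gamma)\|_{L^{1/(1-\varepsilon)}}$ and reuses the same $\theta_1,\theta_2$ splitting, so that $\theta_1+\theta_2>\tfrac32$ delivers $t^{-1-\delta}$ directly. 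Your route is arguably cleaner (it needs neither the gradient decay nor the $\theta$-conditions for this step), but note two points you should make explicit: first, it applies the theorem's first conclusion at the exponent $p\gamma$, which in general differs from the given $h_1$, so you must check that $p\gamma$ is paired with some admissible $h_2'$ (true by the remark following the theorem that any $h_1\in(1,\infty)$, $h_2\in(1,2)$ with $\frac1{h_1}+\frac1{h_2}<1$ is admissible, but it is an extra run of the bootstrap, not a consequence of the fixed pair $(h_1,h_2)$); second, the resulting exponent $-\gamma+\frac1p+\gamma\delta\le-1-\delta$ forces $p>\frac{1}{\gamma-1}$ and $\delta$ small, so the constant and $\delta_0$ acquire a dependence on $\gamma$ near $1$ that the paper's route does not have.
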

We notice that the set of exponents that satisfy the previous statement is not empty: if $h_1,h_2$ are such that
$$ \frac{1}{h_1}+\frac{1}{h_2}<1, \quad h_1\in(1,\infty),\quad h_2\in(1,2), $$
then they are admissible for Theorem \ref{t.decay.m.}.
\begin{rem}
In system \eqref{m.sys.}, we considered a function $u\colon (0,T)\times\mathbb R^2\to\mathbb R$. Applying the same argument, it is possible to study the equation
$$ (\partial_t-\Delta_\alpha)u=A\cdot\nabla\left(|u|^\gamma\right) \quad  (0,T)\times\mathbb R^2, $$
for $u\colon (0,T)\times\mathbb R^2\to\mathbb R^m$ and $A\in L^\infty((0,T)\times\mathbb R^2; \mathbb R^m\times\mathbb R^2)$, with $m=1,2$.
\end{rem}

The structure of the paper is the following: in Section 2, we focus on the a priori estimates corresponding to the linearization of the main system \eqref{m.sys.}. In Section 3, we focus on the local existence and on the proof of Theorem \ref{t.ex.loc.main}. Finally, in Section 4 we study the global existence and subsequently the decay in time of such a solutions. 

\vspace{2mm}

\textbf{Notation:} In the following, we denote for simplicity
$$ L^p_TV\coloneqq L^p((0,T);V) $$
and
$$ L^pV\coloneqq L^p(\mathbb R_+;V) $$
for $p\in[1,\infty]$, and $V=H^s_\alpha(\mathbb R^2), H^s(\mathbb R^2)$ with $s\ge 0$.

\section{Linear Estimates}
\subsection{Semigroup Estimates}

The aim of this subsection is to extend the classical power-type decay 
 \begin{equation}\label{eq.classic time decay}
 \|e^{\Delta t}g\|_{L^p(\mathbb R^N)}\lesssim t^{-\frac{N}{2}\left(\frac{1}{q}-\frac{1}{p}\right)}\|g\|_{L^q(\mathbb R^N)} 
\end{equation}
for some $g$ to the operator $e^{\Delta_\alpha t}$ with $N=2,3$. It is immediate to see that the existence of the eigenfunction \eqref{eigenfuntion} does not allow an estimate like \eqref{eq.classic time decay} in all $L^2(\mathbb{R}^N)$, because
$$e^{\Delta_\alpha t}\psi_\alpha=e^{E_\alpha t}\psi_\alpha,$$
so we can have an exponential growth. 
\begin{prop}\label{p.proj.comm.} 
The operator $\Delta_\alpha$ commutes with the projections $P_d$ and $P_{ac}$ defined in \eqref{proj.def.}:
\begin{equation}
    \begin{aligned}
        &\Delta_\alpha P_{d}=P_{d}\Delta_\alpha, &  &\Delta_\alpha P_{ac}=P_{ac}\Delta_\alpha.
    \end{aligned}
\end{equation}
\end{prop}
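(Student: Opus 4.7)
The plan is to exploit the spectral characterization of $\psi_\alpha$ as the eigenfunction associated with $E_\alpha$, combined with the self-adjointness of $\Delta_\alpha$. Concretely, the projection $P_d$ can be written as $P_d u = \langle u, \psi_\alpha\rangle_{L^2}\psi_\alpha$, so the entire computation reduces to understanding how $\Delta_\alpha$ interacts with a one-dimensional subspace spanned by an eigenfunction.

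First I would verify the domain issue: since $\psi_\alpha \in \mathcal{D}(\Delta_\alpha)$, the range of $P_d$ is contained in $\mathcal{D}(\Delta_\alpha)$, and because $\mathcal{D}(\Delta_\alpha)$ is a linear subspace of $L^2(\mathbb R^2)$, the identity $P_{ac} = I - P_d$ guarantees that $P_{ac}$ also preserves $\mathcal{D}(\Delta_\alpha)$. Hence both sides of each of the claimed equalities are well defined on all of $\mathcal{D}(\Delta_\alpha)$.

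Next, for any $u\in\mathcal{D}(\Delta_\alpha)$, I would compute
\begin{equation}
\Delta_\alpha P_d u = \langle u,\psi_\alpha\rangle_{L^2}\, \Delta_\alpha\psi_\alpha = E_\alpha\langle u,\psi_\alpha\rangle_{L^2}\, \psi_\alpha,
\end{equation}
while self-adjointness of $\Delta_\alpha$ (together with $\psi_\alpha\in\mathcal{D}(\Delta_\alpha)$) gives
\begin{equation}
P_d \Delta_\alpha u = \langle \Delta_\alpha u,\psi_\alpha\rangle_{L^2}\,\psi_\alpha = \langle u,\Delta_\alpha\psi_\alpha\rangle_{L^2}\,\psi_\alpha = E_\alpha \langle u,\psi_\alpha\rangle_{L^2}\,\psi_\alpha.
\end{equation}
The two sides match, which proves $\Delta_\alpha P_d = P_d \Delta_\alpha$ on $\mathcal{D}(\Delta_\alpha)$. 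The statement for $P_{ac}$ is then automatic: using $P_{ac} = I - P_d$ and the commutativity just obtained,
\begin{equation}
\Delta_\alpha P_{ac} = \Delta_\alpha - \Delta_\alpha P_d = \Delta_\alpha - P_d \Delta_\alpha = (I - P_d)\Delta_\alpha = P_{ac}\Delta_\alpha.
\end{equation}

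There is no real obstacle here; the only point to be careful about is to phrase the identities at the level of the correct domain (as operator identities on $\mathcal{D}(\Delta_\alpha)$) rather than as identities of bounded operators on $L^2(\mathbb R^2)$, since $\Delta_\alpha$ is unbounded. If one later needs to apply the proposition on the scale $H^s_\alpha$ described in Remark \ref{rem.Lap-alpha.dom.}, the same argument works verbatim, because $\psi_\alpha$ belongs to every $H^s_\alpha$ and self-adjointness is inherited by the fractional powers $(\lambda-\Delta_\alpha)^{s/2}$.
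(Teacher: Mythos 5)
Your proof is correct and follows essentially the same route as the paper: both reduce the claim to the identity $P_d\Delta_\alpha u=\langle u,\Delta_\alpha\psi_\alpha\rangle_{L^2}\psi_\alpha=E_\alpha\langle u,\psi_\alpha\rangle_{L^2}\psi_\alpha$ via self-adjointness of $\Delta_\alpha$ and the eigenvalue equation for $\psi_\alpha$, and then obtain the $P_{ac}$ identity from $P_{ac}=I-P_d$. Your explicit remarks on the domain and on the level at which the identities hold are a welcome (if minor) addition to the paper's argument.
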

\begin{proof}
    It is sufficient to prove the first equality.
    For every $\varphi\in D(\Delta_\alpha)$, we have
    \begin{equation}
        \begin{aligned}
            \Delta_\alpha P_{d}\varphi&=\Delta_\alpha\langle\varphi,\psi_\alpha\rangle_{L^2}\psi_\alpha\\
            &=\langle\varphi,\psi_\alpha\rangle_{L^2}E_\alpha\psi_\alpha\\
            &=\langle\varphi,E_\alpha\psi_\alpha\rangle_{L^2}\psi_\alpha\\
            &=\langle\varphi,\Delta_\alpha\psi_\alpha\rangle_{L^2}\psi_\alpha\\
            &=\langle\Delta_\alpha \varphi,\psi_\alpha\rangle_{L^2}\psi_\alpha=P_d\Delta_\alpha\varphi,\\ 
        \end{aligned}
    \end{equation}
    where the last line follows from self-adjointness.
\end{proof}
\begin{rem}
    The projections are self-adjoint and commute with $\Delta_\alpha$ which is also self-adjoint, so they are simultaneously diagonalizable. 
\end{rem}
We see that these two projections are very useful to avoid the eigenvalue. For this reason, we cite two very classical results (see for example \cite{RS78}, section XII)
\begin{thm}
   Let $A$ self-adjoint operator with domain $D(A)$. Let $M\subseteq D(A)$ a closed subset such that $A(M)\subseteq M$, then
   \begin{enumerate}
       \item $D(A)\cap M^\bot$ is dense in $M^\bot$;
       \item $h\in D(A)\cap M^\bot\implies Ah\in M^\bot $;
       \item $AP_M:M\to M$ is bounded on $M$ and 
       $AP_{M^\bot}:D(A)\cap M^\bot\to M^\bot$ is self-adjoint on $M^\bot$.
   \end{enumerate}
\end{thm}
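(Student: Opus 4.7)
The plan is to exploit the hypothesis $M\subseteq D(A)$ heavily: since $M$ is a closed subspace contained in $D(A)$, the orthogonal projection $P_M$ takes all of $H$ into $M\subseteq D(A)$, so both $P_M$ and $P_{M^\perp}=I-P_M$ preserve $D(A)$. Part (1) then follows essentially for free: given any $v\in M^\perp$, I would pick a sequence $v_n\in D(A)$ with $v_n\to v$ (by density of $D(A)$) and replace it by $w_n:=P_{M^\perp}v_n\in D(A)\cap M^\perp$; continuity of $P_{M^\perp}$ gives $w_n\to P_{M^\perp}v=v$, so $D(A)\cap M^\perp$ is dense in $M^\perp$.

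For part (2), I would just compute: if $h\in D(A)\cap M^\perp$ and $m\in M\subseteq D(A)$, self-adjointness of $A$ gives $\langle Ah,m\rangle=\langle h,Am\rangle$, and the invariance hypothesis $A(M)\subseteq M$ together with $h\perp M$ kills the right-hand side. Hence $Ah\perp M$.

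For part (3), boundedness of $AP_M$ on $M$ is a closed-graph argument. The restriction $A|_M\colon M\to H$ has full domain $M$ (because $M\subseteq D(A)$), maps into $M$ by the invariance hypothesis, and is closed as a restriction of the closed operator $A$ to the closed subspace $M$; the closed graph theorem then gives boundedness. For the self-adjointness of $B:=A|_{D(A)\cap M^\perp}$ as an operator on $M^\perp$, symmetry is immediate from the self-adjointness of $A$. To upgrade symmetry to self-adjointness I would take $z\in D(B^*)$ with $B^*z=w$, so $z,w\in M^\perp$ and $\langle Ax,z\rangle=\langle x,w\rangle$ for all $x\in D(A)\cap M^\perp$. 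Then for an arbitrary $y\in D(A)$, I would decompose $y=P_My+P_{M^\perp}y$, noting that both pieces lie in $D(A)$, use $AP_My\in M\perp z$ to kill one term, and apply the adjoint relation to the other, together with $P_My\perp w$, to conclude $\langle Ay,z\rangle=\langle y,w\rangle$ on all of $D(A)$. Self-adjointness of $A$ then forces $z\in D(A)$ and $Az=w$, so $z\in D(B)$ with $Bz=w$.

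The main obstacle I expect is the last step: one has to track carefully that the adjoint $B^*$ on $M^\perp$ really coincides with $A$ restricted appropriately, and this relies on the interplay between the decomposition $H=M\oplus M^\perp$ preserving $D(A)$ and the invariance on both sides. All the other items are either density or a one-line computation, but the self-adjointness of the reduced operator is the part where the hypotheses $M\subseteq D(A)$ and $A(M)\subseteq M$ both have to be used simultaneously.
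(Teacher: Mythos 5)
Your proof is correct. Note, however, that the paper does not actually prove this statement: it is quoted as a classical result on reducing subspaces of self-adjoint operators with a reference to Reed--Simon, so there is no argument in the paper to compare yours against. Your proposal is a sound self-contained proof along the standard lines: the key observation that $M\subseteq D(A)$ closed forces $P_M$ (hence $P_{M^\perp}$) to preserve $D(A)$ gives (1) immediately; (2) is the one-line duality computation you give; boundedness of $A|_M$ follows from the closed graph theorem exactly as you say; and your upgrade from symmetry to self-adjointness of the reduced operator on $M^\perp$ --- splitting an arbitrary $y\in D(A)$ as $P_My+P_{M^\perp}y$, killing the $M$-component with invariance and orthogonality, and then invoking $A=A^*$ to conclude $z\in D(A)$, $Az=w$ --- is precisely the right mechanism and uses both hypotheses where they are needed.
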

In our case $\Delta_\alpha$ $M=Ker(E_\alpha-\Delta_\alpha)$,  $P_M=P_d$, $P_{M^\bot}=P_{ac}$, and we can characterize the spectrum of $\Delta_\alpha P_{ac}$.
\begin{thm}
    Let $A$ self-adjoint operator and $\lambda_0$ eigenvalue. Then $\lambda$ is an isolated point of $\sigma(A)$
   if and only if
    $$\lambda_0\notin\sigma\left(AP_{Ker(\lambda_0-A)^\bot}\right),$$
    or, equivalently ,
    $$\rho(A)=\rho\left(AP_{Ker(\lambda_0-A)^\bot}\right)\backslash\{\lambda_0\}.$$
\end{thm}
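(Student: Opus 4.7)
The plan is to reduce the problem to a direct-sum decomposition along the eigenspace and its orthogonal complement. Set $M=Ker(\lambda_0-A)$. Since $A$ is self-adjoint and $M$ is a closed $A$-invariant subspace, the orthogonal complement $M^\bot$ is also $A$-invariant: this is exactly the content of the theorem cited just above. One can therefore write $\mathcal{H}=M\oplus M^\bot$ and, with respect to this splitting, $A=\lambda_0 P_M\oplus A|_{M^\bot}$, where $A|_{M^\bot}=AP_{M^\bot}$ is viewed as a self-adjoint operator on $M^\bot$. A block-by-block check of bijectivity of $\lambda-A$ on $\mathcal{H}$ versus on each summand then yields the spectral decomposition $\sigma(A)=\{\lambda_0\}\cup\sigma(AP_{M^\bot})$, which will drive both implications.

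The implication ``$\lambda_0\notin\sigma(AP_{M^\bot})\Rightarrow\lambda_0$ is isolated in $\sigma(A)$'' is immediate: since $\sigma(AP_{M^\bot})$ is closed and misses $\lambda_0$, there is an open neighbourhood of $\lambda_0$ disjoint from it, and by the decomposition this neighbourhood meets $\sigma(A)$ only at $\lambda_0$.

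For the reverse implication I would appeal to the spectral theorem for self-adjoint operators. Assume $\lambda_0$ is isolated in $\sigma(A)$ and pick $\varepsilon>0$ with $(\lambda_0-\varepsilon,\lambda_0+\varepsilon)\cap\sigma(A)=\{\lambda_0\}$. The associated spectral projection $E_A((\lambda_0-\varepsilon,\lambda_0+\varepsilon))$ collapses to $E_A(\{\lambda_0\})$, and the crucial structural fact is that at an isolated point of the spectrum of a self-adjoint operator the range of this spectral projection coincides with the geometric eigenspace: $E_A(\{\lambda_0\})\mathcal{H}=Ker(\lambda_0-A)=M$. Hence $E_A(\{\lambda_0\})=P_M=P_d$. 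Restricting $A$ to $M^\bot=P_{ac}\mathcal{H}$, the functional calculus shows that the spectrum of $A|_{M^\bot}=AP_{M^\bot}$ is exactly $\sigma(A)\setminus\{\lambda_0\}$, which does not contain $\lambda_0$. Taking complements in $\mathbb{C}$ gives the equivalent formulation $\rho(A)=\rho(AP_{M^\bot})\backslash\{\lambda_0\}$.

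The main obstacle will be precisely this identification, at an isolated spectral point, of the range of the spectral projection with the geometric eigenspace. Without self-adjointness, $\lambda_0$ could a priori sit in the continuous or residual spectrum of $A|_{M^\bot}$ even after quotienting out $M$; the spectral theorem (absence of residual spectrum together with the functional calculus of indicator functions) is what rules this out and makes the two-way equivalence clean.
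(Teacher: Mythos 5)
Your proposal is correct, but there is nothing in the paper to compare it with: the paper states this theorem without proof, quoting it (together with the preceding invariance theorem) as a classical result from \cite{RS78}, Section XII. What you give is essentially the standard textbook argument, and it works. Two points in your sketch are worth making explicit. First, for unbounded $A$ the block decomposition $A=\lambda_0 P_M\oplus A|_{M^\bot}$ needs $D(A)=M\oplus\bigl(D(A)\cap M^{\bot}\bigr)$; this holds because $M=\operatorname{Ker}(\lambda_0-A)\subseteq D(A)$, so for $x\in D(A)$ one has $P_{M^\bot}x=x-P_Mx\in D(A)\cap M^{\bot}$, and the block-by-block bijectivity check (using also that the resolvent of $A$, when it exists, preserves $M$ and hence $M^{\bot}$) gives $\sigma(A)=\{\lambda_0\}\cup\sigma\bigl(AP_{M^\bot}\bigr)$, i.e. $\rho(A)=\rho\bigl(AP_{M^\bot}\bigr)\setminus\{\lambda_0\}$, unconditionally. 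Second, the ``crucial structural fact'' you invoke, $\operatorname{Ran}E_A(\{\lambda_0\})=\operatorname{Ker}(\lambda_0-A)$, is true for every self-adjoint operator and every $\lambda_0$, isolated or not, since $\|(A-\lambda_0)x\|^{2}=\int_{\sigma(A)}|\lambda-\lambda_0|^{2}\,d\|E_\lambda x\|^{2}$ vanishes precisely when the spectral measure of $x$ is concentrated at $\lambda_0$; combined with the isolation of $\lambda_0$ this puts $\sigma\bigl(A|_{M^\bot}\bigr)$ inside $\sigma(A)\setminus(\lambda_0-\varepsilon,\lambda_0+\varepsilon)$, which is all the reverse implication requires. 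With these details spelled out, both directions go through exactly as you describe.
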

These results provide the following characterization
$$\sigma(\Delta_\alpha P_{ac})=\sigma_{ess}(\Delta_\alpha )=(-\infty,0].$$
We can also describe the exponential and the resolvent of $\Delta_\alpha P_{ac}$ in terms of $\Delta_\alpha$.
\begin{prop}\label{prop.eq res=}
    We have the following identities
    \begin{equation}\label{eq.exp=}
        e^{t\Delta_\alpha P_{ac}}P_{ac}g=e^{t\Delta_\alpha }P_{ac}g,
    \end{equation}
    \begin{equation}\label{proj.id.1}
     P_{ac} e^{P_{ac}\Delta_\alpha t}g=e^{P_{ac}\Delta_\alpha t}P_{ac} g,
\end{equation}
    and
    \begin{equation}\label{eq.res=}
        (\lambda-\Delta_\alpha P_{ac})^{-1}P_{ac}g=(\lambda-\Delta_\alpha)^{-1}P_{ac}g
    \end{equation}
    for every $g\in D(\Delta_\alpha)$ and $\lambda\in \mathbb{C}\backslash(-\infty,0]$.
\end{prop}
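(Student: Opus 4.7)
The plan is to exploit that $\mathrm{Range}(P_{ac})=\{\psi_\alpha\}^\perp$ is invariant under $\Delta_\alpha$ (by Proposition \ref{p.proj.comm.}) and that on this subspace $\Delta_\alpha$ and $\Delta_\alpha P_{ac}$ coincide, since $P_{ac}h=h$ for every $h\in\mathrm{Range}(P_{ac})$. All three identities then follow from uniqueness principles for the associated abstract Cauchy problem and resolvent equation, together with bounded-operator commutation arguments.

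I would first establish \eqref{proj.id.1}. From Proposition \ref{p.proj.comm.} we have $[\Delta_\alpha,P_{ac}]=0$, and since $P_{ac}^2=P_{ac}$ it follows that $P_{ac}$ commutes also with the generator $P_{ac}\Delta_\alpha$:
\begin{equation}
(P_{ac}\Delta_\alpha)P_{ac}=P_{ac}(P_{ac}\Delta_\alpha)=P_{ac}\Delta_\alpha.
\end{equation}
Because $P_{ac}$ is bounded and commutes with the generator, it commutes with the associated $C_0$-semigroup by the standard fact that $e^{tA}B=Be^{tA}$ whenever $B$ is bounded and $BA\subseteq AB$. This yields \eqref{proj.id.1} for every $g\in L^2(\mathbb R^2)$.

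For \eqref{eq.exp=}, set $h=P_{ac}g$. Both $t\mapsto e^{t\Delta_\alpha P_{ac}}h$ and $t\mapsto e^{t\Delta_\alpha}h$ are strongly continuous curves with initial value $h$. Using \eqref{proj.id.1} and the invariance of $\mathrm{Range}(P_{ac})$, each curve stays inside $\mathrm{Range}(P_{ac})$ for all $t\ge 0$; on that subspace the two generators agree because $\Delta_\alpha P_{ac}h=\Delta_\alpha h$. Hence both curves solve the same linear Cauchy problem $\partial_t v=\Delta_\alpha v$, $v(0)=h$, on $\mathrm{Range}(P_{ac})$, and uniqueness for the semigroup generated by a self-adjoint operator gives the identity.

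For \eqref{eq.res=} I would run the analogous argument at the resolvent level. Fix $\lambda\in\mathbb C\setminus(-\infty,0]$ with $\lambda\neq E_\alpha$, set $h=P_{ac}g$ and $u=(\lambda-\Delta_\alpha)^{-1}h$. Applying $P_{ac}$ and using Proposition \ref{p.proj.comm.}:
\begin{equation}
(\lambda-\Delta_\alpha)P_{ac}u=P_{ac}(\lambda-\Delta_\alpha)u=P_{ac}h=h=(\lambda-\Delta_\alpha)u,
\end{equation}
so by invertibility $P_{ac}u=u$, i.e. $u\in\mathrm{Range}(P_{ac})$. On that subspace $(\lambda-\Delta_\alpha)u=(\lambda-\Delta_\alpha P_{ac})u$, giving $u=(\lambda-\Delta_\alpha P_{ac})^{-1}h$, which is \eqref{eq.res=}. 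The one delicate point, and the main obstacle, is that the value $\lambda=E_\alpha$ belongs to $\mathbb C\setminus(-\infty,0]$ but not to $\rho(\Delta_\alpha)$, so the right-hand side of \eqref{eq.res=} has to be interpreted carefully there. This is handled by the Krein formula \eqref{eq.resolvent formula}: for $h=P_{ac}g$ we have $\langle h,G_{E_\alpha}\rangle\propto\langle P_{ac}g,\psi_\alpha\rangle=0$, so the singular term $\frac{\langle h,G_\lambda\rangle}{\alpha+c(\lambda)}G_\lambda$ extends continuously across $\lambda=E_\alpha$, and \eqref{eq.res=} persists on all of $\mathbb C\setminus(-\infty,0]=\rho(\Delta_\alpha P_{ac})$ by analytic continuation.
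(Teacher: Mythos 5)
Your proof is correct and follows essentially the same route as the paper: commutation of $P_{ac}$ with $\Delta_\alpha$ (Proposition \ref{p.proj.comm.}), invariance of the absolutely continuous subspace, and uniqueness for the associated Cauchy problem and resolvent equation. The only substantive addition is your explicit handling of the point $\lambda=E_\alpha$ via the Krein formula and analytic continuation, which the paper's argument passes over silently; this is a refinement of the same approach rather than a different one.
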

\begin{proof}
    First of all, we note that $u(t)=e^{t\Delta_\alpha P_{ac}}P_{ac}g$ is well defined for every $g\in D(\Delta_\alpha)$, because the exponent is a self-adjoint operator. Moreover, $u$ is defined as the unique solution of the following Cauchy problem
    \begin{equation}\label{eq.Cauchy1}
        \left\{\begin{aligned}
            &\partial_t u-\Delta_\alpha P_{ac}u=0, \\
            &u(0)=P_{ac}g.
        \end{aligned}\right.
    \end{equation}
    Applying $P_d$ in both equations, we obtain $P_d u(t)\equiv0$, because $P_d$ commutes with $\Delta_\alpha$ and the linear operator $\partial_t$. This means that $P_{ac}u(t)=u(t)$.
    So, if $u$ is a solution of \eqref{eq.Cauchy1}, it solves also 
    \begin{equation}\label{eq.Cauchy2}
        \left\{\begin{aligned}
            &\partial_t u-\Delta_\alpha u=0, \\
            &u(0)=P_{ac}g.
        \end{aligned}\right.
    \end{equation}
    Since \eqref{eq.Cauchy2} has a unique solution, this implies \eqref{eq.exp=}.
    Similarly, \eqref{proj.id.1} can be proven. This is because $P_{ac}e^{P_ac\Delta_\alpha t}g=P_{ac}f$, 
    with $f$ solution of
    \begin{equation}\label{eq.Cauchy3}
        \left\{\begin{aligned}
            &\partial_t f-P_{ac}\Delta_\alpha f=0, \\
            &f(0)=g.
        \end{aligned}\right.
    \end{equation}
    Applying $P_{ac}$ to both parts in \eqref{eq.Cauchy3}, we see that $P_{ac}$ solves \eqref{eq.Cauchy2}.
    To prove \eqref{eq.res=}, the procedure is very similar. 
    We consider $u=(\lambda-\Delta_\alpha P_{ac})^{-1}P_{ac}g$, that is equivalent to
   \begin{equation}\label{eq.rs}
       (\lambda-\Delta_\alpha P_{ac})u=P_{ac}g
   \end{equation}
    and we apply $P_d$ to both terms. This provides $\lambda P_du=0$, with $\lambda\neq0$. We have again $P_{ac}u=u$, that allows us to obtain \eqref{eq.res=}, thanks to \eqref{eq.rs}. 
\end{proof}

\begin{thm}\label{t.expDelta alpha}  Let $N=2,3$ and $1<q < p <\infty$ for $N=2$ or $\frac{3}{2}<q < p <3$ for $N=3$. There exists a constant $C$, independent of $t$, such that the following inequality holds for every $t>0$
    \begin{equation}\label{eq.exp timedecay1}
    \left\|e^{\Delta_\alpha t}P_{ac}g\right\|_{L^p(\mathbb R^N)}\leq C t^{-\frac{N}{2}\left(\frac{1}{q}-\frac{1}{p}\right)}\left\|P_{ac}g\right\|_{L^q(\mathbb R^N)}.
    \end{equation}
\end{thm}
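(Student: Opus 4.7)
The strategy is to combine the resolvent identity \eqref{eq.resolvent formula} with an integral representation of the semigroup restricted to $P_{ac}$, and to exploit the scaling properties of the Green's function $G_\lambda$.

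By \eqref{eq.exp=}, it suffices to study $e^{\Delta_\alpha P_{ac}t}P_{ac}g$. The spectral identification $\sigma(\Delta_\alpha P_{ac})=(-\infty,0]$ recorded before the statement means that $\Delta_\alpha P_{ac}$ generates a bounded analytic semigroup, and a Dunford/Laplace inversion formula provides, after deformation of the line of integration onto a Hankel contour $\Gamma$ wrapping $(-\infty,0]$,
$$
e^{\Delta_\alpha t}P_{ac}g \;=\; \frac{1}{2\pi i}\int_\Gamma e^{\lambda t}(\lambda-\Delta_\alpha)^{-1}P_{ac}g\,d\lambda.
$$
Inserting \eqref{eq.resolvent formula} splits $e^{\Delta_\alpha t}P_{ac}g=e^{\Delta t}P_{ac}g+\mathcal{I}(t)$, where $\mathcal{I}(t)$ is the contour integral involving the rank-one correction $\frac{\langle P_{ac}g,G_\lambda\rangle}{\alpha+c(\lambda)}G_\lambda$. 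The free-heat term is handled immediately by \eqref{eq.classic time decay} combined with the $L^q$-boundedness of $P_{ac}$ recorded after \eqref{proj.def.}, giving directly the bound \eqref{eq.exp timedecay1} for that piece.

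The substantive step is the correction $\mathcal{I}(t)$. I would collapse $\Gamma$ onto the cut; the simple pole of $(\alpha+c(\lambda))^{-1}$ at $\lambda=E_\alpha$ produces only a removable singularity, because $\psi_\alpha$ is a multiple of $G_{E_\alpha}$ by \eqref{eigenfuntion} and $\langle P_{ac}g,\psi_\alpha\rangle=0$ kills the residue. Minkowski and H\"older applied to the resulting real integral over $(-\infty,0]$ then give
$$
\|\mathcal{I}(t)\|_{L^p(\mathbb R^N)} \;\lesssim\; \|P_{ac}g\|_{L^q}\int_0^{\infty}e^{-\mu t}\,|\omega(\mu)|\,\|G_\mu\|_{L^{q'}}\|G_\mu\|_{L^p}\,d\mu,
$$
where $|\omega(\mu)|$ is the (uniformly bounded after cancellation) weight coming from the jump of $(\alpha+c(\lambda))^{-1}$ across the cut. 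The decisive computation is the scaling $\|G_\mu\|_{L^r(\mathbb R^N)}\sim \mu^{(N-2)/2-N/(2r)}$, obtained via the substitution $y=\sqrt\mu\,x$ in \eqref{eq.action} (and its $N=3$ analogue). This is finite exactly when $r\in[1,\infty)$ for $N=2$ or $r\in[1,3)$ for $N=3$, and applied to $r=q'$ and $r=p$ it yields precisely the range of admissible pairs $(p,q)$ in the statement. A change of variable $\tau=\mu t$ then extracts the factor $t^{-\frac{N}{2}(1/q-1/p)}$ up to a convergent $\tau$-integral.

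The main obstacle I anticipate is the logarithmic behaviour of $c(\lambda)$ in $N=2$: one must track $\log\sqrt\lambda$ through the jump across the cut, combine it with a Taylor expansion of $\langle P_{ac}g,G_\lambda\rangle$ near $\lambda=E_\alpha$ in order to certify that the apparent singularity is truly removable (quantitatively, not just at the level of residues), and check that the logarithm at $\lambda\to\infty$ is absorbed by the exponential $e^{-\mu t}$ so as not to destroy the polynomial gain in $t$. Once these technical checks are in place, combining the free-heat and correction contributions yields \eqref{eq.exp timedecay1}.
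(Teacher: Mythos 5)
Your overall skeleton is the same as the paper's: Dunford representation of the semigroup, the Krein resolvent formula to split off the free heat part (handled by the classical decay and the $L^q$-boundedness of $P_{ac}$), and then scaling of $G_\lambda$ plus H\"older and a change of variables to extract the power $t^{-\frac N2(\frac1q-\frac1p)}$ from the rank-one correction. The genuine problem is your decision to collapse the contour onto the cut $(-\infty,0]$. On the cut the spectral parameter is $\lambda=-\mu\pm i0$ with $\mu>0$, so $\sqrt\lambda$ is purely imaginary and the kernel loses its exponential decay: in $N=2$, $K_0$ at imaginary argument is (up to a constant) a Hankel function with modulus $\sim|x|^{-1/2}$ at infinity, so the boundary values $G_{-\mu\pm i0}$ belong to $L^r(\mathbb R^2)$ only for $r>4$; in $N=3$ the boundary kernel is $e^{\mp i\sqrt\mu|x|}/(4\pi|x|)$, which lies in no global $L^r(\mathbb R^3)$ at all. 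Consequently the quantity $\|G_\mu\|_{L^{q'}}\|G_\mu\|_{L^p}$ in your bound for $\mathcal I(t)$ is infinite for the exponent ranges in the statement, and the scaling $\|G_\mu\|_{L^r}\sim\mu^{(N-2)/2-N/(2r)}$ that you call decisive is only valid for the resolvent evaluated off the spectrum (where the kernel decays exponentially); it does not survive the passage to the cut. So the step ``Minkowski and H\"older applied to the resulting real integral over $(-\infty,0]$'' cannot be carried out as written, and the $t$-power extraction collapses with it.

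The paper avoids this entirely by not collapsing the contour: it takes $\Gamma=\Gamma_1\cup\Gamma_2\cup\Gamma_3$ with two horizontal half-lines at heights $\mp\varepsilon$ over the negative axis and a small arc of radius $\varepsilon<E_\alpha$ around the origin. On this contour the resolvent kernel retains decay, the estimates \eqref{eq.clambda} give $|\alpha+c(\lambda)|\gtrsim1$ (resp. $\gtrsim|\lambda|^{1/2}$ for $N=3$) simply because $\Gamma$ stays away from $E_\alpha$, and the integrals $I_1,I_2,I_3$ produce the factor $t^{-\frac N2(\frac1q-\frac1p)}$ after the substitution $ts=\sigma$. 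Note also that in this setup your worry about the pole at $\lambda=E_\alpha$ is moot: the contour only surrounds $\sigma(\Delta_\alpha P_{ac})=(-\infty,0]$ and never encloses or crosses $E_\alpha$, so no residue cancellation via $\langle P_{ac}g,\psi_\alpha\rangle=0$ is needed. If you replace your Hankel-contour collapse by the paper's $\varepsilon$-separated contour, the remaining computations in your proposal go through essentially as in the paper.
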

\begin{proof}
    We remember that, if $A$ is a sectorial operator, then we have a representation of the semigroup through the Laplace transform. In particular, the following formula holds
\begin{equation}\label{eq.Dunford integral}
    e^{tA}=\frac{1}{2\pi i}\int _{\Gamma} e^{t\lambda}(\lambda-A)^{-1}d\lambda, \ \ t>0,
\end{equation}
    for every $\Gamma$ curve that surrounds the spectrum of $A$.

 If we consider the resolvent formula \eqref{eq.resolvent formula} for $\Delta_\alpha$, it can be extended to the whole set of resolvents $\rho(\Delta_\alpha)$, considering the principal value of the complex logarithm $\operatorname{Log}\sqrt{\lambda}=\ln{\sqrt{|\lambda|}}+\frac{i}{2}\operatorname{arg}(\lambda)$.

With \eqref{eq.Dunford integral} and Proposition \ref{prop.eq res=} wee see that
\begin{equation}\label{eq.exponentialDeltaalpha1}
    e^{\Delta_\alpha t}P_{ac}g=\frac{1}{2\pi i}\int_{\Gamma}e^{\lambda t}(\lambda-\Delta_\alpha)^{-1}P_{ac}gd\lambda,
\end{equation}
for every $\Gamma$ that surrounds $\sigma(\Delta_\alpha P_{ac})=(-\infty,0]$.
With \eqref{eq.resolvent formula}, we reduce \eqref{eq.exponentialDeltaalpha1} to
\begin{equation}\label{eq.exponentialDeltaalpha2}
    e^{t\Delta_\alpha}P_{ac}g=e^{t\Delta}P_{ac}g+\frac{1}{2\pi i}\int _{\Gamma} e^{t\lambda} \frac{\langle P_{ac}g,G_\lambda\rangle}{\alpha+c(\lambda)}G_\lambda d\lambda
\end{equation}
because $\sigma(\Delta)= \sigma (\Delta_\alpha P_{ac})$. The formula \eqref{eq.exponentialDeltaalpha2} is independent from $\Gamma$, so  we choose the curve $\Gamma=\Gamma_1\cup \Gamma_2\cup \Gamma_3$, with
\begin{equation}
    \begin{aligned}
       &\Gamma_1=\{s-\varepsilon i|s\leq 0\},\\
        &\Gamma_2=\{\varepsilon e^{is}|s\in[-\pi/2,\pi/2]\},\\
        &\Gamma_3=\{-s+\varepsilon i|s\geq0\},
    \end{aligned}   
\end{equation} 
with $\varepsilon<E_\alpha$.
We have the following rescaling property
\begin{equation}\label{eq.rescaling G}
    G_\lambda(|x|)= \lambda^{\frac{N}{2}-1}G_1(\sqrt{\lambda}|x|),
\end{equation}
that implies
\begin{equation}
    \|G_\lambda\|_{L^p(\mathbb R^N)}=|\lambda|^{\frac{N}{2}-1-\frac{N}{2p}}\|G_1\|_{L^p(\mathbb R^N)}
\end{equation}
and, thanks to Holder inequality, 
\begin{equation}\label{eq.Holder G}
    |\langle P_{ac}g,G_\lambda\rangle|\leq |\lambda|^{\frac{N}{2q}-1}\|P_{ac}g\|_{L^q(\mathbb R^N)}\|G_1\|_{L^{q^\prime}(\mathbb R^N)}.
\end{equation}
Here we used the hypothesis $p\neq\infty$ and $q\neq 1$ for $N=2$ and $p<3$ and $q>\frac{3}{2}$ for $N=3$. Thanks to the fact that $\Gamma$  is far from $E_\alpha$, we have 
\begin{equation}\label{eq.clambda}
    \begin{aligned}
       & \left|\alpha+{\frac{\gamma-\ln{2}}{2\pi}}+\frac{1}{2\pi}\ln\sqrt{|\lambda|}+\frac{i}{2}\operatorname{arg}(\lambda)\right|\gtrsim 1,\\
        &\left|\alpha+\frac{\sqrt{\lambda}}{4\pi}\right|\gtrsim |\lambda|^{\frac{1}{2}},
    \end{aligned}
\end{equation}
so we can estimate the difference 
\begin{equation}\label{eq.exp diff}
\begin{aligned}
\|e^{t\Delta_\alpha}P_{ac}g-e^{t\Delta}P_{ac}g\|_{L^p(\mathbb R^N)}&\lesssim\int _{\Gamma} |e^{t\lambda}|{|\lambda|^{\frac{N}{2q}-1-\frac{N}{2p}}}|d\lambda| \|P_{ac}g\|_{L^q(\mathbb R^N)}\\
&=(I_1(t)+I_2(t)+I_3(t))\|P_{ac}g\|_{L^q(\mathbb R^N)},
\end{aligned}
\end{equation}
where we have denoted with
\begin{equation}
    I_j(t)=\int _{\Gamma_j} |e^{t\lambda}|{|\lambda|^{\frac{N}{2q}-1-\frac{N}{2p}}}|d\lambda|.
\end{equation}
For the first integral, we compute
\begin{equation}
    I_1(t)=\int_{-\infty}^{0}e^{ts}|s-\varepsilon i|^{\frac{N}{2q}-1-\frac{N}{2p}} ds= \int_{-\infty}^{0}e^{ts}(s^2+\varepsilon^2)^{\frac{1}{2}\Big(\frac{N}{2}(\frac{1}{q}-\frac{1}{p})-1\Big)}ds,
\end{equation}
 and with the change of variable $ts=\sigma$ we obtain
\begin{equation}\label{eq.I1}
    I_1(t)= \int_{-\infty}^{0}e^\sigma |\sigma^2+t^2\varepsilon^2|^{\frac{1}{2}\left(\frac{N}{2}(\frac{1}{q}-\frac{1}{p})-1\right)}d\sigma \ t^{-\frac{N}{2}\left(\frac{1}{q}-\frac{1}{p}\right)}.
\end{equation}
Because $p>q>\frac{N}{2}$, then $-1<\frac{N}{2}\left(\frac{1}{q}-\frac{1}{p}\right)-1<0$, so we can estimate
\begin{equation}
    I_1(t)\leq t^{-\frac{N}{2}\left(\frac{1}{q}-\frac{1}{p}\right)}\int_{-\infty}^{0}e^\sigma |\sigma|^{\frac{N}{2q}-\frac{N}{2p}-1}d\sigma\leq C t^{-\frac{N}{2}\left(\frac{1}{q}-\frac{1}{p}\right)}. 
\end{equation}

The computation of $I_3(t)$ is very similar:
\begin{equation}\label{eq.I3}
    I_3(t)\leq\int_0^\infty e^{-ts}|\varepsilon i-s|^{\frac{N}{2q}-1-\frac{N}{2p}}ds\leq C t^{-\frac{N}{2}\left(\frac{1}{q}-\frac{1}{p}\right)}.
\end{equation}
We see that $I_2(t)$, thanks to symmetry, is zero
\begin{equation}\label{eq.I2}
    I_2(t)=\int_{-\frac{\pi}{2}}^{\frac{\pi}{2}}e^{t\varepsilon \sin{s}}\varepsilon^2 sds\leq \varepsilon^2e^{t\varepsilon}\int_{-\frac{\pi}{2}}^\frac{\pi}{2} sds=0.
\end{equation}
Combining \eqref{eq.I1}, \eqref{eq.I2}, \eqref{eq.I3} and \eqref{eq.classic time decay} in \eqref{eq.exp diff}, we have the thesis.
\end{proof}

The projection $P_{ac}$ on the absolute continuous space can be removed from \eqref{eq.exp timedecay1} considering $t\in(0,T)$ for some $T>0$.
\begin{thm}\label{t.loc.sem.es.}\hfill\\
Let $N=2,3$, $T>0$ and $1<q < p <\infty$ for $N=2$ or $\frac{3}{2}<q < p <3$ for $N=3$. There exists a constant $C(T)$, continuous for $T\ge0$, such that the following inequality holds for every $t\in[0,T]$
    $$ \left\|e^{\Delta_\alpha t}g\right\|_{L^p(\mathbb R^N)}\leq C(T) t^{-\frac{N}{2}\left(\frac{1}{q}-\frac{1}{p}\right)}\|g\|_{L^q(\mathbb R^N)}. $$
\end{thm}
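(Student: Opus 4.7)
The plan is to decompose $g = P_{ac}g + P_d g$ via the orthogonal projections of \eqref{proj.def.} and estimate the two pieces separately. The absolutely continuous piece is handled directly by Theorem \ref{t.expDelta alpha}; combining it with the $L^q$-boundedness of $P_{ac}$ (noted just before Remark \ref{rem.Lap-alpha.dom.}) yields
\begin{equation*}
\left\|e^{\Delta_\alpha t} P_{ac} g\right\|_{L^p(\mathbb R^N)} \leq C_1\, t^{-\frac{N}{2}\left(\frac{1}{q}-\frac{1}{p}\right)} \|g\|_{L^q(\mathbb R^N)},
\end{equation*}
with $C_1$ independent of $T$. For the discrete part, the eigenfunction identity $\Delta_\alpha \psi_\alpha = E_\alpha \psi_\alpha$ together with the rank-one form of $P_d$ gives $e^{\Delta_\alpha t} P_d g = e^{E_\alpha t} \langle g,\psi_\alpha\rangle_{L^2} \psi_\alpha$, so H\"older's inequality in the inner product produces
\begin{equation*}
\left\|e^{\Delta_\alpha t} P_d g\right\|_{L^p(\mathbb R^N)} \leq e^{E_\alpha t} \|\psi_\alpha\|_{L^{q'}(\mathbb R^N)} \|\psi_\alpha\|_{L^p(\mathbb R^N)} \|g\|_{L^q(\mathbb R^N)}.
\end{equation*}

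The only analytic ingredient to check is the $L^r$-integrability of $\psi_\alpha = G_{E_\alpha}/\|G_{E_\alpha}\|_{L^2}$ for $r \in \{p, q'\}$. In dimension 2 the function $K_0(\sqrt{E_\alpha}|x|)$ is logarithmic at the origin and decays exponentially at infinity, so $\psi_\alpha \in L^r(\mathbb R^2)$ for all $r \in [1,\infty)$, which covers the hypothesis $1 < q < p < \infty$. In dimension 3 the singularity $G_{E_\alpha}(x) \sim |x|^{-1}$ at the origin forces $r < 3$, but the constraint $\tfrac{3}{2} < q < p < 3$ (and hence $q' < 3$) is tailored exactly to this. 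When $\alpha \geq 0$ in dimension 3 no eigenvalue exists and $P_d = 0$, so this branch is trivial.

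Finally, to fold $e^{E_\alpha t}$ into the prescribed negative power of $t$ on $(0,T]$, I use that the exponent $\frac{N}{2}(\frac{1}{q}-\frac{1}{p})$ is strictly positive, so
\begin{equation*}
e^{E_\alpha t} \leq e^{E_\alpha T}\, T^{\frac{N}{2}\left(\frac{1}{q}-\frac{1}{p}\right)}\, t^{-\frac{N}{2}\left(\frac{1}{q}-\frac{1}{p}\right)} \quad \text{for } t \in (0,T].
\end{equation*}
Setting
\begin{equation*}
C(T) = C_1 + e^{E_\alpha T}\, \|\psi_\alpha\|_{L^{q'}}\, \|\psi_\alpha\|_{L^p}\, T^{\frac{N}{2}\left(\frac{1}{q}-\frac{1}{p}\right)}
\end{equation*}
gives a constant that is continuous on $[0,\infty)$; continuity at $T=0$ is automatic because the second summand vanishes there, thanks to the positive exponent. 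No step here is technically demanding; the main conceptual point is simply that on bounded time intervals the positive eigenvalue is harmless, provided the corresponding eigenfunction has enough integrability, which the explicit formulas of Subsection \ref{subsec.Delta-alpha} readily supply.
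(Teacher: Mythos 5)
Your argument is correct, but it follows a genuinely different route from the paper. The paper proves this theorem by re-running the Dunford-integral argument of Theorem \ref{t.expDelta alpha} with a contour that surrounds the \emph{entire} spectrum $(-\infty,0]\cup\{E_\alpha\}$, so the eigenvalue's contribution is absorbed into the contour estimates and produces the factor $e^{TE_\alpha}$ directly in the bounds for $I_1,I_3$. You instead split $g=P_{ac}g+P_dg$, quote Theorem \ref{t.expDelta alpha} verbatim for the absolutely continuous part, and treat the discrete part explicitly as the rank-one term $e^{E_\alpha t}\langle g,\psi_\alpha\rangle\psi_\alpha$, absorbing $e^{E_\alpha t}$ into a $T$-dependent constant via the strictly positive exponent $\frac{N}{2}(\frac1q-\frac1p)$. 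Your route buys several things: it avoids redoing the contour computation (and the delicate lower bound on $\alpha+c(\lambda)$ near the shifted contour), it makes completely transparent why the ranges $1<q<p<\infty$ ($N=2$) and $\frac32<q<p<3$ ($N=3$) appear — they are exactly the integrability ranges of $\psi_\alpha$, resp.\ of $G_{E_\alpha}$ — and it disposes of the case $N=3$, $\alpha\ge0$ (no eigenvalue) for free. What the paper's route buys is uniformity of presentation: everything is phrased through the single representation $e^{t\Delta_\alpha}g=e^{t\Delta}g+\frac{1}{2\pi i}\int_\Gamma e^{t\lambda}\frac{\langle g,G_\lambda\rangle}{\alpha+c(\lambda)}G_\lambda\,d\lambda$, without invoking the boundedness of $P_{ac},P_d$ on $L^q$. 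On that last point, note the paper only records $L^q$-boundedness of the projections for $N=2$; in your proof for $N=3$ you should say explicitly that the same H\"older argument with $\psi_\alpha\in L^r(\mathbb R^3)$ for $r<3$ gives boundedness of $P_d$ (hence of $P_{ac}$) on $L^q$ precisely for $q\in(\frac32,3)$ — which you essentially do when checking the integrability of $\psi_\alpha$, so this is a presentational remark rather than a gap.
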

\begin{proof}
     The proof of this theorem is very similar to the one of Theorem \ref{t.expDelta alpha}, so we will explain only the main points. We use again the Dunford integral defined in \eqref{eq.Dunford integral} for $\Delta_\alpha$, but this time the curve $\Gamma$ surrounds the whole spectrum $\sigma(\Delta_\alpha)=(-\infty,0]\cup\{E_\alpha\}$.
We choose $\Gamma=\Gamma_1\cup \Gamma_2\cup \Gamma_3$, with
\begin{equation}
    \begin{aligned}
       &\Gamma_1=\{s- i|s\leq E_\alpha\},\\
        &\Gamma_2=\{E_\alpha+ e^{is}|s\in[-\pi/2,\pi/2]\},\\
        &\Gamma_3=\{-s+ i|s\geq E_\alpha\}.
    \end{aligned}   
\end{equation} 
We obtain again
$$ e^{t\Delta_\alpha}g=e^{t\Delta}g+\frac{1}{2\pi i}\int _{\Gamma} e^{t\lambda} \frac{\langle g,G_\lambda\rangle}{\alpha+c(\lambda)}G_\lambda d\lambda, $$
also because $\sigma(\Delta)\subseteq \sigma(\Delta_\alpha)$, so $\Gamma$ surrounds also the spectrum of the free Laplacian.
With \eqref{eq.rescaling G}, \eqref{eq.Holder G} and \eqref{eq.clambda}, the following estimate can be easily obtained

\begin{equation}
\begin{aligned}
\|e^{t\Delta_\alpha}g-e^{t\Delta}g\|_{L^p(\mathbb R^N)}&\lesssim(I_1(t)+I_2(t)+I_3(t))\|g\|_{L^q(\mathbb R^N)},
\end{aligned}
\end{equation}
where
\begin{equation}
    I_j(t)=\int _{\Gamma_j} |e^{t\lambda}|{|\lambda|^{\frac{N}{2q}-1-\frac{N}{2p}}}|d\lambda|.
\end{equation}
It is easy to compute:
\begin{equation}
\begin{aligned}
     &I_j(t)\leq \int_{-\infty}^{tE_\alpha}e^\sigma |\sigma|^{\frac{N}{2q}-1-\frac{N}{2p}}d\sigma \ t^{-\frac{N}{2}\left(\frac{1}{q}-\frac{1}{p}\right)}\leq Ce^{TE_\alpha}t^{-\frac{N}{2}\left(\frac{1}{q}-\frac{1}{p}\right)}, &j=1,3
\end{aligned}
\end{equation}
and
\begin{equation}
    I_2(t)\leq e^{tE_\alpha}\int_{-\frac{\pi}{2}}^\frac{\pi}{2}e^{t\sin{s}}sds\leq0.
\end{equation}

\end{proof}

\begin{rem}\label{rem.L2.sem.es.}
    To be noticed that the previous estimate does not work for $p=q$. Anyway, when $p=q=2$, it follows by standard functional analysis argument that 
    $$ \left\|e^{\Delta_\alpha t}g\right\|_{L^2(\mathbb R^N)}\le C(T)\|g\|_{L^2(\mathbb R^N)}\quad \forall t\in(0,T), $$
    for some $C(T)>0$ continuous for $T\ge 0$.
\end{rem}

For $N=2$, we can also extend these results to the gradient. This is not possible in $N=3$, because 
$$\nabla G_\lambda\in L^p(\mathbb{R}^3)\iff p<\frac{3}{2},$$
but 
$$ G_\lambda\in L^{q^\prime}(\mathbb{R}^3)\iff q>\frac{3}{2}. $$
So, it is not possible to have a range where $q<p$ and the condition for $p $ and $q$ above are satisfied.
\begin{thm}\label{t.sem-grad.es.}\hfill
    Let $N=2$ and $1<q<p<2$. There exists a constant $C$ independent of $t$, such that the following inequality holds for every $t\geq0$
    $$ \left\|\nabla e^{\Delta_\alpha t}P_{ac}g\right\|_{L^p(\mathbb R^2)}\leq C t^{-\frac{1}{2}-\left(\frac{1}{q}-\frac{1}{p}\right)}\|P_{ac}g\|_{L^q(\mathbb R^2)}. $$
\end{thm}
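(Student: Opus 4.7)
The plan is to follow the Dunford-integral strategy of Theorem \ref{t.expDelta alpha} and simply pull the gradient inside the representation formula. Combining \eqref{eq.resolvent formula} with Proposition \ref{prop.eq res=} and the same contour $\Gamma=\Gamma_1\cup\Gamma_2\cup\Gamma_3$ (with $\varepsilon<E_\alpha$) as in the proof of Theorem \ref{t.expDelta alpha}, one gets
$$ \nabla e^{t\Delta_\alpha}P_{ac}g = \nabla e^{t\Delta}P_{ac}g + \frac{1}{2\pi i}\int_\Gamma e^{t\lambda}\frac{\langle P_{ac}g,G_\lambda\rangle}{\alpha+c(\lambda)}\nabla G_\lambda\, d\lambda, $$
with the interchange of $\nabla$ and the contour integral justified by the $L^p$-estimates on $\nabla G_\lambda$ derived below.

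For the free part $\nabla e^{t\Delta}P_{ac}g$, the classical heat-kernel bound
$$ \|\nabla e^{t\Delta}h\|_{L^p(\mathbb R^2)}\lesssim t^{-\frac{1}{2}-\left(\frac{1}{q}-\frac{1}{p}\right)}\|h\|_{L^q(\mathbb R^2)} $$
together with the $L^q$-boundedness of $P_{ac}$ gives the desired bound (the condition $1<q<p<2$ is compatible with the range of this Gaussian estimate). For the corrector, the new ingredient is the $L^p$-norm of $\nabla G_\lambda$: applying the rescaling \eqref{eq.rescaling G} with $N=2$ (i.e.\ $G_\lambda(x)=G_1(\sqrt{\lambda}|x|)$) and a change of variables yields
$$ \|\nabla G_\lambda\|_{L^p(\mathbb R^2)}=|\lambda|^{\frac{1}{2}-\frac{1}{p}}\|\nabla G_1\|_{L^p(\mathbb R^2)}, $$
and the assumption $p<2$ is used exactly here, since $K_0(r)\sim -\ln r$ near $0$ forces $\nabla G_1(x)\sim |x|^{-1}$, hence $\nabla G_1\in L^p(\mathbb R^2)$ if and only if $p<2$. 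Coupling this with the H\"older estimate \eqref{eq.Holder G} (valid because $q>1$) and the lower bound \eqref{eq.clambda} on $|\alpha+c(\lambda)|$ along $\Gamma$, the corrector is bounded in $L^p$ by
$$ \|P_{ac}g\|_{L^q(\mathbb R^2)}\int_\Gamma |e^{t\lambda}|\,|\lambda|^{-\frac{1}{2}+\frac{1}{q}-\frac{1}{p}}\,|d\lambda|. $$

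Splitting this integral into pieces along $\Gamma_1,\Gamma_2,\Gamma_3$ and applying the rescaling $ts=\sigma$ as in Theorem \ref{t.expDelta alpha}, the horizontal pieces each produce the factor $t^{-\frac{1}{2}-(\frac{1}{q}-\frac{1}{p})}$, while the small arc $\Gamma_2$ contributes zero by the same $s\mapsto -s$ symmetry argument used in \eqref{eq.I2}; integrability near $\sigma=0$ reduces to $-\frac{1}{2}+\frac{1}{q}-\frac{1}{p}>-1$, which holds since $p>q$. The main obstacle is exactly the restriction $p<2$: in $N=2$ one cannot push the estimate beyond $p=2$ because of the $|x|^{-1}$ singularity of $\nabla G_1$ at the origin, and it is precisely this mechanism that prevents the statement from being extended to $N=3$, as noted in the paragraph preceding the theorem.
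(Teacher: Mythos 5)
Your proposal is correct and follows essentially the same route as the paper: the Dunford-integral representation on the same contour $\Gamma$, the rescaling identity $\|\nabla G_\lambda\|_{L^p(\mathbb R^2)}=|\lambda|^{\frac12-\frac1p}\|\nabla G_1\|_{L^p(\mathbb R^2)}$, the H\"older bound \eqref{eq.Holder G} with \eqref{eq.clambda}, and the same splitting $I_1,I_2,I_3$ with the change of variables $ts=\sigma$. The only cosmetic difference is that you spell out the free heat-kernel gradient estimate and the $|x|^{-1}$ singularity of $\nabla G_1$ explaining $p<2$, which the paper leaves implicit.
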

\begin{proof}
    From \eqref{eq.exponentialDeltaalpha2}, we can write 
    \begin{equation}
        \nabla e^{\Delta_\alpha t}P_{ac}g- \nabla e^{\Delta t}P_{ac}g=\frac{1}{2\pi i}\int _\Gamma e^{t\lambda}\frac{\langle P_{ac}g,G_\lambda\rangle}{\alpha+c(\lambda)}\nabla G_\lambda d\lambda,
    \end{equation}
for every curve $\Gamma$ that surrounds $\sigma(\Delta_\alpha P_{ac})=(-\infty,0]$.
As done in the proof of Theorem \ref{t.loc.sem.es.}, we consider the curve $\Gamma=\Gamma_1\cup \Gamma_2\cup \Gamma_3$, with
\begin{equation}
    \begin{aligned}
       &\Gamma_1=\{s-\varepsilon i|s\leq 0\},\\
        &\Gamma_2=\{\varepsilon e^{is}|s\in[-\pi/2,\pi/2]\},\\
        &\Gamma_3=\{-s+\varepsilon i|s\geq0\},
    \end{aligned}   
\end{equation} 
with $\varepsilon<E_\alpha$.
The rescaling property \eqref{eq.rescaling G} can be extended to the gradient 
\begin{equation}
    \nabla G_\lambda(|x|)=\lambda^\frac{1}{2}\nabla G_1(\sqrt{\lambda}|x|),
\end{equation}
that gives
\begin{equation}\label{eq.rescaling nabla G}
    \|\nabla G_\lambda\|_{L^p}(\mathbb{R}^2)=|\lambda|^{\frac{1}{2}-\frac{1}{p}}\|\nabla G_1\|_{L^p(\mathbb{R}^2)}.
\end{equation}
With \eqref{eq.rescaling nabla G}, \eqref{eq.Holder G} and \eqref{eq.clambda}, we can estimate:
\begin{equation}
    \begin{aligned}
        \|\nabla e^{\Delta_\alpha t}P_{ac}g- \nabla e^{\Delta t}P_{ac}g\|_{L^p(\mathbb{R}^2)}&\lesssim \int_\Gamma|e^{t\lambda}| |\lambda|^{\frac{1}{q}-1}|\lambda|^{\frac{1}{2}-\frac{1}{p}}|d\lambda|\|P_{ac}g\|_{L^q(\mathbb R^2)}\\
        &\lesssim\int_\Gamma|e^{t\lambda}| |\lambda|^{\frac{1}{q}-\frac{1}{2}-\frac{1}{p}}|d\lambda|\|P_{ac}g\|_{L^q(\mathbb R^2)}\\
        &=(I_1(t)+I_2(t)+I_3(t))\|P_{ac}g\|_{L^q(\mathbb R^2)},
    \end{aligned}
\end{equation}
with 

   \begin{equation}
    I_j(t)=\int _{\Gamma_j} |e^{t\lambda}|{|\lambda|^{\frac{1}{q}-\frac{1}{2}-\frac{1}{p}}}|d\lambda|.
\end{equation} 
We note that if $1<q<p<2$ then $-\frac{1}{2}<\frac{1}{q}-\frac{1}{p}-\frac{1}{2}<0$, so we
can repeat the computation done in the proof of Theorem \ref{t.loc.sem.es.}. In particular
$$ I_1(t)= t^{-\frac{1}{2}-\left(\frac{1}{q}-\frac{1}{p}\right)}\int_{-\infty}^0 e^\sigma |\sigma^2+t^2\varepsilon^2|^{-\frac{1}{4}+\frac{1}{2q}-\frac{1}{2p}}d\sigma $$
$$ \leq t^{-\frac{1}{2}-\left(\frac{1}{q}-\frac{1}{p}\right)}\int_{-\infty}^0 e^\sigma |\sigma|^{-\frac{1}{2}+\frac{1}{q}-\frac{1}{p}}d\sigma\leq C t^{-\frac{1}{2}-\left(\frac{1}{q}-\frac{1}{p}\right)} $$
and the computation for $I_3$ is similar. For $I_2$, we can follow \eqref{eq.I2}.

\end{proof}

\subsection{Energy estimates}\label{subsec.en.es.}

Let $T>0$, then we want to prove the existence of a solution for the linear system
\begin{equation}\label{loc.ex.sys.lin.}
\left\{\begin{array}{ll}
     (\partial_t-\Delta_\alpha)u=f & (0,T)\times \mathbb R^2 \\
     u(0)=u_0 & \mathbb R^2,
\end{array}\right. 
\end{equation}
which is the linearized system corresponding to \eqref{m.sys.}. We can use the Duhamel Formula to write the weak definition for the solution $u$:
\begin{equation}\label{Duh.for.lin.}
    u(t)=e^{\Delta_\alpha t}u_0 + \int_0^t e^{\Delta_\alpha (t-\tau)}f(\tau)d\tau.
\end{equation}
Let $u_0\in H^1_\alpha(\mathbb R^2)$ and $f\in L^2_TL^2$. Following the estimates the authors got in \cite{BG25}, by the energy method we expect to prove the solution to satisfy 
$$ \|u\|_{L^\infty_TH^1_\alpha} + \|u\|_{L^2_TH^2_\alpha}\lesssim \|u_0\|_{H^1_\alpha} + \|f\|_{L^2_TL^2}. $$
When $T<\infty$, it is possible to prove such an estimate even for $\Delta_\alpha$. However, it is not possible to prove the local well-posedness for our problem in this way. In particular, we need an estimate with $\|f\|_{L^2_TL^h}$ and $h\in[1,2)$. As we will see, the reason comes from the decomposition of $u$ in \eqref{dom.Lap.alpha}:
$$ \nabla u=\nabla \phi_\lambda + q\nabla G_\lambda, $$
for some $\lambda\in\mathbb R$ and where $G_\lambda$ is defined in \eqref{eq.action}. In particular, $\nabla G_\lambda\in L^h(\mathbb R^2)$ if and only if $h\in[1,2)$. Since we need to estimate the gradient of our solution due to the structure of our nonlinearity in \eqref{m.sys.}, we can not expect $\nabla(|u|^\gamma)\in L^2(\mathbb R^2)$ for any $\gamma>1$. Consequently, we can not use the standard argument for the energy method: we look for a bound for the single terms of the Duhamel Formula \eqref{Duh.for.lin.}:
\begin{lem}\label{l.lin.es.loc.ex.1}
    Let $T>0$, $r>2$, $s\in\left(0,\frac{2}{r}\right)$, $p\ge 1$ and $q\in[2,\infty)$, let $u_0\in H^1_\alpha(\mathbb R^2)$, then
    $$ \left\|e^{\Delta_\alpha t} u_0\right\|_{L^\infty_T H^1_\alpha} + \left\|e^{\Delta_\alpha t} u_0\right\|_{L^r_T H^{s+1}_\alpha} + \left\|e^{\Delta_\alpha t}u_0\right\|_{L^p_TL^q}\le C(T) \|u_0\|_{H^1_\alpha(\mathbb R^2)}, $$
    for $C(T)=C(T,r,s,p)>0$ continuous for $T\ge0$.
\end{lem}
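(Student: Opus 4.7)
The plan is to split $u_0 = P_d u_0 + P_{ac} u_0$ via the spectral projections of \eqref{proj.def.}. Since $P_d$ and $P_{ac}$ commute with $\Delta_\alpha$ by Proposition \ref{p.proj.comm.}, they commute with the semigroup and with every fractional power $(\lambda - \Delta_\alpha)^{\sigma/2}$, so each of the three norms will be estimated separately on the two pieces and then combined.

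On the range of $P_d = \operatorname{Span}\{\psi_\alpha\}$ the semigroup acts as multiplication by the scalar $e^{E_\alpha t}$, and $(\lambda-\Delta_\alpha)^{\sigma/2}$ as multiplication by $(\lambda - E_\alpha)^{\sigma/2}$. Hence for any $\sigma \in [0,2]$ and $t \in [0,T]$,
$$\|e^{\Delta_\alpha t} P_d u_0\|_{H^\sigma_\alpha} \le e^{E_\alpha T}(\lambda-E_\alpha)^{(\sigma-1)/2}\|u_0\|_{H^1_\alpha},$$
which is continuous in $T$ and suffices for the $P_d$ contribution to all three quantities once one integrates over $(0,T)$ in time.

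For the absolutely continuous part, the functional calculus for the self-adjoint operator $\Delta_\alpha|_{\operatorname{Ran} P_{ac}}$, whose spectrum is $(-\infty,0]$ by the discussion following Proposition \ref{prop.eq res=}, yields
$$\|e^{\Delta_\alpha t} P_{ac} u_0\|_{H^\sigma_\alpha}^2 = \int_{-\infty}^{0}(\lambda-\xi)^{\sigma}\, e^{2t\xi}\, d\mu_{P_{ac}u_0}(\xi).$$
Taking $\sigma = 1$ and using $e^{2t\xi}\le 1$ on $\xi\le 0$ gives $\|e^{\Delta_\alpha t} P_{ac} u_0\|_{H^1_\alpha} \le \|P_{ac}u_0\|_{H^1_\alpha}$ uniformly in $t$, which covers the $L^\infty_T H^1_\alpha$ bound. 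For $\sigma = s+1$ I would factor $(\lambda-\xi)^{s+1} = (\lambda-\xi)(\lambda-\xi)^s$ and use the elementary calculus bound
$$\sup_{\xi\le 0}(\lambda-\xi)^s e^{2t\xi} \le C(s,\lambda,T)\, t^{-s}, \quad t\in(0,T],$$
obtained by maximising $u\mapsto (\lambda+u)^s e^{-2tu}$ on $u\ge 0$ (the interior critical point $\lambda + u = s/(2t)$ produces exactly a $t^{-s}$ factor, while for large $t$ the bound is harmless). Pulling this $L^\infty$ multiplier out of the spectral integral gives the smoothing estimate $\|e^{\Delta_\alpha t} P_{ac} u_0\|_{H^{s+1}_\alpha} \le C(T)\, t^{-s/2}\|u_0\|_{H^1_\alpha}$, whose $r$-th power is integrable on $(0,T)$ precisely when $rs/2 < 1$, i.e.\ when $s < 2/r$, with a constant continuous in $T \ge 0$.

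The $L^p_T L^q$ bound then follows by chaining the $L^\infty_T H^1_\alpha$ estimate with the Sobolev embedding $H^1_\alpha(\mathbb R^2) \hookrightarrow L^q(\mathbb R^2)$ of Remark \ref{rem.Sob.emb.}, which contributes only a factor $T^{1/p}$ in the time integration. The only genuinely delicate ingredient is the smoothing inequality on the $P_{ac}$ component, since it is exactly the step where the sharp threshold $s < 2/r$ emerges; all other parts reduce to the spectral theorem, the trivial bound $e^{2t\xi}\le 1$ on $\xi\le 0$, and the $H^1_\alpha$-Sobolev embedding.
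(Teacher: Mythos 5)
Your proof is correct and follows essentially the same route as the paper: the heart of both arguments is the spectral-calculus smoothing bound $\sup_{\xi\in\sigma(\Delta_\alpha)}(\lambda-\xi)^{s}e^{2t\xi}\lesssim_T t^{-s}$, which gives $\|e^{\Delta_\alpha t}u_0\|_{H^{s+1}_\alpha}\lesssim (1+t^{-s/2})\|u_0\|_{H^1_\alpha}$ and hence $L^r_T$-integrability precisely when $s<\frac{2}{r}$, followed by the Sobolev embedding of $H^1_\alpha(\mathbb R^2)$ for the $L^p_TL^q$ bound. The only difference is presentational: you split $u_0=P_du_0+P_{ac}u_0$ and treat the rank-one eigenpiece separately, whereas the paper keeps the full operator and simply absorbs the factor $e^{TE_\alpha}$ coming from the eigenvalue into the constant $C(T)$, using its $L^2$ bound from Remark \ref{rem.L2.sem.es.} for the $L^\infty_TH^1_\alpha$ part.
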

\begin{proof}\hfill\\
Choosing $\omega=1+E_\alpha$, where $E_\alpha$ is defined in \eqref{eigenvalue}, it follows from Remark \ref{rem.Lap-alpha.dom.} that
$$ \|g\|_{H^1_\alpha(\mathbb R^2)} \simeq \|(\omega-\Delta_\alpha)^{1/2} g\|_{L^2(\mathbb R^2)} . $$
Therefore, thanks to what we underlined in Remark \ref{rem.L2.sem.es.}, it holds that
$$ \left\|e^{\Delta_\alpha t} u_0\right\|_{L^\infty_T H^1_\alpha}\lesssim  \left\|(\omega - \Delta_\alpha)^{1/2}e^{\Delta_\alpha t} u_0\right\|_{L^\infty_T L^2} $$
$$ = \left\|e^{\Delta_\alpha t} (\omega - \Delta_\alpha)^{1/2}u_0\right\|_{L^\infty_T L^2} \lesssim  \left\|(\omega - \Delta_\alpha)^{1/2}u_0\right\|_{L^2(\mathbb R^2)}\lesssim \|u_0\|_{H^1_\alpha(\mathbb R^2)}. $$
Let us pass to the other estimates: 
$$ \left\|e^{\Delta_\alpha t} u_0\right\|_{H^{s+1}_\alpha(\mathbb R^2)}\lesssim \left\|(\omega - \Delta_\alpha)^{s/2}e^{\Delta_\alpha t} (\omega -\Delta_\alpha)^{1/2}u_0\right\|_{L^2(\mathbb R^2)}\lesssim \left(1+t^{-s/2}\right)\|u_0\|_{H^1_\alpha(\mathbb R^2)},  $$
where the last inequality follows from the Spectral Theorem. So, since $sr<2$, 
$$ \left\|e^{\Delta_\alpha t} u_0\right\|_{L^r_T H^{s+1}_\alpha}\lesssim \|u_0\|_{H^1_\alpha(\mathbb R^2)}. $$
Finally, the $L^pL^q$-estimate can be done directly by the semigroup estimates from Theorem \ref{t.loc.sem.es.}:
$$ \left\|e^ {\Delta_\alpha t}u_0\right\|_{L^p_TL^q}\le T^{1/p}\|u_0\|_{L^q(\mathbb R^2)}\lesssim T^{1/p}\|u_0\|_{H^1_\alpha(\mathbb R^2)},  $$
where the last inequality follows from the Sobolev embeddings of $H^1_\alpha(\mathbb R^2)$ from Remark \ref{rem.Sob.emb.}.
\end{proof}
\begin{lem}\label{l.lin.es.loc.ex.2}
    Let $T>0$, $r>2$, $s\in\left(0,\frac{2}{r}\right)$, $p\ge 1$ and $q\in[2,\infty)$ then there is $\varepsilon_0(r,s)>0$ sufficiently small such that, for any $\varepsilon<\varepsilon_0$ and for any
    $$ f\in L^r_TL^\frac{2}{1+\varepsilon}\cap L^1_TL^\frac{1}{1-\varepsilon}, $$
    the function
    $$ w(t)=\int_0^t e^{\Delta_\alpha(t-\tau)}f(\tau)d\tau $$
    satisfies
    $$ \|w\|_{L^\infty_T H^1_\alpha} + \|w\|_{L^r_T H^{s+1}_\alpha} + \|w\|_{L^p_TL^q}\le C(T)\|f\|_{L^r_TL^\frac{2}{1+\varepsilon}}, $$
    for some $C(T)=C(T,r,s,p,q,\varepsilon)>0$ continuous for $T\ge 0$.
\end{lem}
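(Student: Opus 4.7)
The plan is to bound each of the three norms on the left-hand side directly from the Duhamel representation $w(t) = \int_0^t e^{\Delta_\alpha(t-\tau)} f(\tau)\,d\tau$, leveraging the spectral characterization $\|u\|_{H^\sigma_\alpha}\simeq\|(\omega-\Delta_\alpha)^{\sigma/2}u\|_{L^2}$ (valid for $\omega = 1 + E_\alpha$, as in the proof of Lemma \ref{l.lin.es.loc.ex.1} and Remark \ref{rem.Lap-alpha.dom.}) together with the decay estimates of Theorem \ref{t.loc.sem.es.}.

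The key preliminary step is a unified kernel bound. I split $e^{\Delta_\alpha s} = e^{\Delta_\alpha s/2}\circ e^{\Delta_\alpha s/2}$ and apply spectral calculus to the first factor: since $\omega > \max\sigma(\Delta_\alpha) = E_\alpha$ and $\sigma(\Delta_\alpha) \subseteq (-\infty,0]\cup\{E_\alpha\}$, the function $\lambda \mapsto (\omega-\lambda)^{\sigma/2}e^{\lambda s/2}$ is bounded on $\sigma(\Delta_\alpha)$ by $C(T)\,s^{-\sigma/2}$ uniformly for $s \in (0,T]$, hence $\|(\omega-\Delta_\alpha)^{\sigma/2}e^{\Delta_\alpha s/2}\|_{L^2\to L^2}\lesssim C(T)\,s^{-\sigma/2}$. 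To the second factor I apply Theorem \ref{t.loc.sem.es.} with $q_1 = 2/(1+\varepsilon)$ and $p_1 = 2$, producing $\|e^{\Delta_\alpha s/2}\|_{L^{2/(1+\varepsilon)} \to L^2} \le C(T)\,s^{-\varepsilon/2}$. Composing these yields the pointwise-in-time estimate
\begin{equation*}
\bigl\|(\omega-\Delta_\alpha)^{\sigma/2} e^{\Delta_\alpha (t-\tau)} f(\tau)\bigr\|_{L^2(\mathbb R^2)} \lesssim C(T)\,(t-\tau)^{-(\sigma+\varepsilon)/2}\,\|f(\tau)\|_{L^{2/(1+\varepsilon)}(\mathbb R^2)}.
\end{equation*}

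With this in hand, each of the three norms reduces to a one-dimensional time-convolution estimate. For $\|w\|_{L^\infty_T H^1_\alpha}$ I take $\sigma = 1$ and apply H\"older's inequality in $\tau$ with exponents $r',r$; the required integrability of $(t-\tau)^{-(1+\varepsilon)r'/2}$ near zero amounts to $r > 2/(1-\varepsilon)$, which is consistent with $r>2$ once $\varepsilon_0 < 1 - 2/r$. For $\|w\|_{L^r_T H^{s+1}_\alpha}$ I take $\sigma = s+1$ and invoke Young's convolution inequality $\|K * g\|_{L^r_T} \le \|K\|_{L^1_T}\|g\|_{L^r_T}$; the kernel $\tau \mapsto \tau^{-(s+1+\varepsilon)/2}$ lies in $L^1(0,T)$ exactly when $s + \varepsilon < 1$, compatible with $s < 2/r < 1$ after further shrinking $\varepsilon_0$. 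For $\|w\|_{L^p_T L^q}$ I bypass the fractional apparatus entirely: the Sobolev embedding $H^1_\alpha(\mathbb R^2) \hookrightarrow L^q(\mathbb R^2)$ from Remark \ref{rem.Sob.emb.} combined with H\"older in time gives
\begin{equation*}
\|w\|_{L^p_T L^q} \le T^{1/p}\,\|w\|_{L^\infty_T L^q} \lesssim T^{1/p}\,\|w\|_{L^\infty_T H^1_\alpha},
\end{equation*}
so this third term is absorbed into the first.

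The main obstacle is the choice of a single $\varepsilon_0 = \varepsilon_0(r,s) > 0$ so that both kernel-integrability conditions — $\varepsilon_0 < 1 - 2/r$ from the $H^1_\alpha$ bound and $\varepsilon_0 < 1 - s$ from the $H^{s+1}_\alpha$ bound — hold simultaneously; any $\varepsilon_0 < \min\{1-s,\,1-2/r\}$ suffices. This reflects the structural constraint already flagged in Subsection \ref{subsec.en.es.}: because $\nabla G_\lambda \notin L^2(\mathbb R^2)$, one cannot close the estimate with $f \in L^2_T L^2$ and so must work with $f$ in $L^{2/(1+\varepsilon)}$ while paying the cost of losing the endpoint $H^2_\alpha$ regularity. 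Continuity of $C(T)$ for $T \ge 0$ follows automatically, as every time integral above evaluates to a strictly positive power of $T$ times fixed constants inherited from Theorem \ref{t.loc.sem.es.}.
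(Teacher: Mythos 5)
Your proof is correct and follows essentially the same route as the paper: the $L^\infty_TH^1_\alpha$ and $L^r_TH^{s+1}_\alpha$ bounds are obtained, as in the paper, from the smoothing estimate $\|(\omega-\Delta_\alpha)^{\sigma/2}e^{\Delta_\alpha(t-\tau)}f(\tau)\|_{L^2}\lesssim C(T)(t-\tau)^{-(\sigma+\varepsilon)/2}\|f(\tau)\|_{L^{2/(1+\varepsilon)}}$ followed by H\"older (resp.\ Young) in time, with the same smallness conditions $\varepsilon<1-\tfrac{2}{r}$ and $\varepsilon<1-s$; your explicit splitting $e^{\Delta_\alpha s}=e^{\Delta_\alpha s/2}e^{\Delta_\alpha s/2}$ with functional calculus on one factor and Theorem \ref{t.loc.sem.es.} on the other just makes precise a step the paper leaves implicit. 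The only genuine deviation is the $L^p_TL^q$ term: the paper estimates it directly via Young's convolution inequality with a case split $p\ge r$ versus $p<r$, whereas you absorb it into the $L^\infty_TH^1_\alpha$ bound using the embedding $H^1_\alpha(\mathbb R^2)\hookrightarrow L^q(\mathbb R^2)$ of Remark \ref{rem.Sob.emb.} and H\"older in time; this is simpler and fully adequate here, since the statement permits a $T$-dependent constant and both arguments yield $C(T)$ continuous down to $T=0$.
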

\begin{proof}\hfill\\
As in the previous proof, thanks to Theorem \ref{t.loc.sem.es.}
$$ \left\|\int_0^t e^{\Delta_\alpha(t-\tau)}f(\tau)d\tau\right\|_{H^1_\alpha(\mathbb R^2)}\lesssim \left\|(\omega-\Delta_\alpha)^{1/2}\int_0^t e^{\Delta_\alpha(t-\tau)}f(\tau)d\tau\right\|_{L^2(\mathbb R^2)} $$
$$ \lesssim \int_0^t (t-\tau)^{-\frac{1}{2}-\left(\frac{1+\varepsilon}{2}-\frac{1}{2}\right)}\|f(\tau)\|_{L^\frac{2}{1+\varepsilon}(\mathbb R^2)}d\tau $$
$$ = \int_0^t (t-\tau)^{-\frac{1+\varepsilon}{2}}\|f(\tau)\|_{L^\frac{2}{1+\varepsilon}(\mathbb R^2)}d\tau \le \left(\int_0^t (t-\tau)^{-\frac{1+\varepsilon}{2}\cdot \frac{r}{r-1}}d\tau\right)^\frac{r-1}{r}\|f\|_{L^r_TL^\frac{2}{1+\varepsilon}}. $$
In particular, since $r>2$, then $r^\prime=\frac{r}{r-1}<2$ and for $\varepsilon>0$ sufficiently small the first factor is finite. Let us pass to the $L^r_TH^{s+1}_\alpha$-norm. As before
$$ \left\|(\omega-\Delta_\alpha)^\frac{s+1}{2}\int_0^t e^{\Delta_\alpha(t-\tau)}f(\tau)d\tau\right\|_{L^r_TL^2} $$
$$ \lesssim \int_0^t (t-\tau)^{-\frac{1+s}{2}-\left(\frac{1+\varepsilon}{2}-\frac{1}{2}\right)}\|f(\tau)\|_{L^\frac{2}{1+\varepsilon}(\mathbb R^2)}d\tau =  \int_0^t (t-\tau)^{-\frac{1+s+\varepsilon}{2}}\|f(\tau)\|_{L^\frac{2}{1+\varepsilon}(\mathbb R^2)}d\tau $$ 
$$ = \int_\mathbb R (t-\tau)^{-\frac{1+s+\varepsilon}{2}}\mathbbm{1}_{(0,T)}(t-\tau)\|f(\tau)\|_{L^\frac{2}{1+\varepsilon}(\mathbb R^2)}\mathbbm{1}_{(0,T)}(\tau)d\tau $$
$$ = \left(t^{-\frac{1+s+\varepsilon}{2}}\mathbbm{1}_{(0,T)}\right)*\left(\|f\|_{L^\frac{2}{1+\varepsilon}(\mathbb R^2)}\mathbbm{1}_{(0,T)}\right). $$
So, by Young's inequality
$$ \left\|\int_0^t (t-\tau)^{-\frac{1+s+\varepsilon}{2}}\|f(\tau)\|_{L^\frac{2}{1+\varepsilon}(\mathbb R^2)}d\tau\right\|_{L^r((0,T))} $$
$$ \lesssim \left\|t^{-\frac{1+s+\varepsilon}{2}}\right\|_{L^1((0,T))}\|f\|_{L^r_TL^\frac{2}{1+\varepsilon}}\lesssim \|f\|_{L^r_TL^\frac{2}{1+\varepsilon}}, $$
for $\varepsilon>0$ sufficiently small.
Let us study now the $L^pL^q$-norm. We need to distinguish some cases:
\begin{itemize}
    \item If $p\ge r$, using Young's inequality as before
    $$ \left\|\int_0^t e^{\Delta_\alpha(t-\tau)}f(\tau)d\tau\right\|_{L^p_TL^q}\lesssim \left\|\int_0^t (t-\tau)^{-\left(\frac{1+\varepsilon}{2}-\frac{1}{q}\right)}\|f(\tau)\|_{L^\frac{2}{1+\varepsilon}(\mathbb R^2)}d\tau\right\|_{L^p((0,T))} $$
    $$ \lesssim \left\|t^{-\left(\frac{1+\varepsilon}{2}-\frac{1}{q}\right)}\right\|_{L^h((0,T))}\|f\|_{L^r_TL^\frac{2}{1+\varepsilon}},  $$
    where
    $$ 1+\frac{1}{p} = \frac{1}{h} + \frac{1}{r}. $$
    We notice that, since $p\ge r$ and $r>2$, $h\in[1,2)$. So, choosing $\varepsilon>0$ sufficiently small, it holds
     $$ \left\|\int_0^t e^{\Delta_\alpha(t-\tau)}f(\tau)d\tau\right\|_{L^p_TL^q}\lesssim \|f\|_{L^r_TL^\frac{2}{1+\varepsilon}}.  $$
    \item If $p<r$, then from the previous point we get
     $$ \left\|\int_0^t e^{\Delta_\alpha(t-\tau)}f(\tau)d\tau\right\|_{L^p_TL^q}\le T^\frac{pr}{r-p} \left\|\int_0^t e^{\Delta_\alpha(t-\tau)}f(\tau)d\tau\right\|_{L^r_TL^q}\lesssim \|f\|_{L^r_TL^\frac{2}{1+\varepsilon}}. $$
\end{itemize}
\end{proof}
As a consequence, we have the a priori estimate on the linear local problem:
\begin{prop}\label{p.lin.loc.es.}
Let $\alpha\in\mathbb R$, $T>0$, $r>2$, $s\in\left(0,\frac{2}{r}\right)$, $p\ge 1$ and $q\in[2,\infty)$, let $u_0\in H^1_\alpha(\mathbb R^2)$, then there is $\varepsilon_0(r,s)>0$ sufficiently small such that, for any $\varepsilon<\varepsilon_0$ and for any
$$ f\in L^r_TL^\frac{2}{1+\varepsilon}\cap L^1_TL^\frac{1}{1-\varepsilon}, $$
the system \eqref{loc.ex.sys.lin.} admits a unique solution 
$$ u\in L^\infty_TH^1_\alpha\cap L^rH^{s+1}_\alpha\cap L^p_TL^q, $$
with 
$$ \|u\|_{L^\infty_T H^1_\alpha} + \|u\|_{L^r_T H^{s+1}_\alpha} + \|u\|_{L^p_TL^q}\le C(T)\|f\|_{L^r_TL^\frac{2}{1+\varepsilon}}, $$
for some $C(T)=C(T,r,s,p,q,\varepsilon)>0$ continuous for $T\ge 0$.
\end{prop}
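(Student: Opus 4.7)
The plan is to define $u$ directly through the Duhamel formula \eqref{Duh.for.lin.} and control each summand using the two preceding lemmas. Setting
$$ u(t) \Def e^{\Delta_\alpha t} u_0 + \int_0^t e^{\Delta_\alpha (t-\tau)} f(\tau)\, d\tau, $$
Lemma \ref{l.lin.es.loc.ex.1} provides the bound for the linear term in all three norms $L^\infty_T H^1_\alpha$, $L^r_T H^{s+1}_\alpha$ and $L^p_T L^q$ in terms of $\|u_0\|_{H^1_\alpha}$, while Lemma \ref{l.lin.es.loc.ex.2} yields the corresponding bound for the Duhamel integral in terms of $\|f\|_{L^r_T L^{2/(1+\varepsilon)}}$. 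Summing the two bounds with a common constant $C(T)$ continuous for $T\ge 0$ immediately gives the required a priori estimate.

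The second step is to verify that $u$ so defined is indeed a solution of \eqref{loc.ex.sys.lin.} in the mild/distributional sense. The hypothesis $f \in L^1_T L^{1/(1-\varepsilon)}$ is used here to ensure that the time integral in the Duhamel formula converges absolutely when tested against smooth functions, so the construction is unambiguous, and to justify a smoothing argument. Concretely, I would replace $f$ by a sequence $f_n \in C^\infty_c((0,T)\times \mathbb R^2)$ converging to $f$ simultaneously in $L^r_T L^{2/(1+\varepsilon)}$ and in $L^1_T L^{1/(1-\varepsilon)}$, solve the regularised problem by standard analytic semigroup theory (where everything is classical because $\Delta_\alpha$ generates a strongly continuous semigroup on $L^2$), and pass to the limit using the linearity of \eqref{loc.ex.sys.lin.} together with the uniform bounds coming from Lemmas \ref{l.lin.es.loc.ex.1} and \ref{l.lin.es.loc.ex.2} applied to $f_n - f_m$. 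This shows both that $u$ solves $(\partial_t - \Delta_\alpha) u = f$ with $u(0) = u_0$ and that it belongs to the claimed solution space.

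Uniqueness then follows from linearity: if $u_1, u_2$ are two solutions in the stated class with the same data $(u_0, f)$, then $w = u_1 - u_2 \in L^\infty_T H^1_\alpha$ satisfies the homogeneous problem with $w(0) = 0$. Using self-adjointness of $\Delta_\alpha$ together with the bound $\sigma(\Delta_\alpha) \subseteq (-\infty, 0] \cup \{E_\alpha\}$, the energy identity
$$ \tfrac{d}{dt}\|w(t)\|_{L^2}^2 \;=\; 2\langle \Delta_\alpha w(t), w(t)\rangle \;\le\; 2 E_\alpha \|w(t)\|_{L^2}^2 $$
combined with Gronwall's inequality forces $w \equiv 0$ on $[0,T]$.

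The main technical subtlety lies in the approximation step of the second paragraph: one must ensure the smooth approximants $f_n$ can be chosen to converge to $f$ simultaneously in both of the scales $L^r_T L^{2/(1+\varepsilon)}$ and $L^1_T L^{1/(1-\varepsilon)}$, and that the corresponding mild solutions $u_n$ converge to $u$ in all three solution norms. The auxiliary $L^1$-type compatibility assumption on $f$ is exactly what makes this step work cleanly; without it, the pointwise-in-$t$ meaning of the Duhamel integral would not be automatic for the full range of Lebesgue exponents appearing in the conclusion. Everything else is a routine consequence of the semigroup estimates of Section 2.1.
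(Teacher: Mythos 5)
Your proposal is correct and follows essentially the same route as the paper, which obtains Proposition \ref{p.lin.loc.es.} directly as a consequence of Lemmas \ref{l.lin.es.loc.ex.1} and \ref{l.lin.es.loc.ex.2} applied to the two terms of the Duhamel formula \eqref{Duh.for.lin.}. The extra material you supply (approximation to verify the mild solution solves \eqref{loc.ex.sys.lin.}, and uniqueness via the spectral bound $\langle \Delta_\alpha w, w\rangle \le E_\alpha \|w\|_{L^2}^2$ plus Gronwall) is standard and consistent with what the paper leaves implicit.
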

Let us pass to the case of $T=\infty$: compared with the local existence, here we have to deal with the presence of a positive eigenvalue for $\Delta_\alpha$. We can not expect the existence of a solution for the system \eqref{loc.ex.sys.lin.} for $T=+\infty$: $e^{\Delta_\alpha t}u_0$ grows exponentially in time for $u_0\in H^1_\alpha(\mathbb R^2)$. For this reason, we need to take the projection $P_{ac}$ defined in \eqref{proj.def.}: let us consider the system
\begin{equation}\label{gl.ex.lin.sys.}
    \left\{ \begin{array}{ll}
        (\partial_t-P_{ac}\Delta_\alpha)u=P_{ac} f & \mathbb R_+\times \mathbb R^2 \\
        u(0)=P_{ac}u_0 & \mathbb R^2.
    \end{array}\right. 
\end{equation}
As before, the solution of the equation \eqref{gl.ex.lin.sys.} can be written through its Duhamel Formula:
$$ u(t)= e^{P_{ac}\Delta_\alpha t}P_{ac}u_0  + \int_0^t e^{P_{ac}\Delta_\alpha(t-\tau)}P_{ac}f(\tau)d\tau. $$
\begin{lem}\label{l.lin.es.gl.ex.1}
    Let $r>2$, $s\in\left(0,\frac{2}{r}\right)$, let $p,q\in(1,\infty)$ such that $\frac{1}{p}+\frac{1}{q}<1$, let $u_0\in L^1\cap H^1_\alpha(\mathbb R^2)$, then
    $$ \left\|e^{P_{ac}\Delta_\alpha t} P_{ac} u_0\right\|_{L^\infty H^1_\alpha} + \left\|e^{P_{ac}\Delta_\alpha t} P_{ac}u_0\right\|_{L^r H^{s+1}_\alpha} +  \left\|e^{P_{ac}\Delta_\alpha t} P_{ac} u_0\right\|_{L^p L^q}\lesssim \|u_0\|_{L^1\cap H^1_\alpha(\mathbb R^2)}. $$
\end{lem}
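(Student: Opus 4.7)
The plan is to reduce to the operator $e^{\Delta_\alpha t}P_{ac}$ via \eqref{eq.exp=} of Proposition \ref{prop.eq res=}, exploit that its range lies in $P_{ac}L^2$, on which the spectrum of $\Delta_\alpha$ coincides with $(-\infty,0]$, and then split the time integral at $t=1$: short times are handled by spectral calculus or the local-in-time estimates of Theorem \ref{t.loc.sem.es.}, while long times are handled by the global decay estimates of Theorem \ref{t.expDelta alpha}. Throughout I fix $\omega=1+E_\alpha$ so that $\|\cdot\|_{H^1_\alpha}\simeq\|(\omega-\Delta_\alpha)^{1/2}\cdot\|_{L^2}$ by Remark \ref{rem.Lap-alpha.dom.}.

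The $L^\infty H^1_\alpha$ bound is immediate: since $P_{ac}$ commutes with every functional-calculus expression of $\Delta_\alpha$ (Proposition \ref{p.proj.comm.}) and $\|e^{\Delta_\alpha t}v\|_{L^2}\le\|v\|_{L^2}$ for $v\in P_{ac}L^2$ (the spectrum there being non-positive), one obtains
$$\left\|e^{\Delta_\alpha t}P_{ac}u_0\right\|_{H^1_\alpha}\simeq\left\|e^{\Delta_\alpha t}(\omega-\Delta_\alpha)^{1/2}P_{ac}u_0\right\|_{L^2}\le\left\|(\omega-\Delta_\alpha)^{1/2}P_{ac}u_0\right\|_{L^2}\lesssim\|u_0\|_{H^1_\alpha},$$
uniformly in $t\ge 0$.

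For the $L^rH^{s+1}_\alpha$ bound, on $(0,1]$ the spectral calculus on $P_{ac}L^2$ yields $\|(\omega-\Delta_\alpha)^{s/2}e^{\Delta_\alpha t}w\|_{L^2}\lesssim t^{-s/2}\|w\|_{L^2}$ (from $\sup_{\mu\ge 0}(\omega+\mu)^{s/2}e^{-\mu t}\lesssim t^{-s/2}$ for small $t$), and with $w=(\omega-\Delta_\alpha)^{1/2}P_{ac}u_0$ this is $L^r$-integrable because $s<2/r$. For $t\in(1,\infty)$ I factor $e^{\Delta_\alpha t}=e^{\Delta_\alpha/2}\cdot e^{\Delta_\alpha(t-1/2)}$: the operator $(\omega-\Delta_\alpha)^{(s+1)/2}e^{\Delta_\alpha/2}$ is bounded on $P_{ac}L^2$ by spectral calculus, and Theorem \ref{t.expDelta alpha} applied to the second factor gives $\|e^{\Delta_\alpha(t-1/2)}P_{ac}u_0\|_{L^2}\lesssim t^{-(1/\tilde q-1/2)}\|u_0\|_{L^{\tilde q}}$ for any $\tilde q\in(1,2)$. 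Choosing $\tilde q$ such that $1/\tilde q>1/r+1/2$ (possible because $r>2$) makes this integrable in $L^r((1,\infty))$, and since $\tilde q\in(1,2)$ the interpolation $L^1\cap L^2\hookrightarrow L^{\tilde q}$ controls $\|u_0\|_{L^{\tilde q}}$ by $\|u_0\|_{L^1\cap H^1_\alpha}$.

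For the $L^pL^q$ bound I split again at $t=1$. On $(0,1]$ Theorem \ref{t.loc.sem.es.} gives $\|e^{\Delta_\alpha t}P_{ac}u_0\|_{L^q}\lesssim t^{-(1/q_1-1/q)}\|u_0\|_{L^{q_1}}$ for any $q_1\in(1,q)$; picking $q_1>1/(1/p+1/q)$ makes the exponent strictly smaller than $1/p$, yielding $L^p((0,1))$-integrability. On $(1,\infty)$ Theorem \ref{t.expDelta alpha} gives $\|e^{\Delta_\alpha t}P_{ac}u_0\|_{L^q}\lesssim t^{-(1/\tilde q-1/q)}\|u_0\|_{L^{\tilde q}}$ for any $\tilde q\in(1,q)$; choosing $\tilde q<1/(1/p+1/q)$ forces the exponent strictly above $1/p$, yielding $L^p((1,\infty))$-integrability. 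Both intervals for $q_1$ and $\tilde q$ are non-empty precisely because $1/p+1/q<1$, and the Sobolev embedding of Remark \ref{rem.Sob.emb.} combined with interpolation against $L^1$ embeds $L^1\cap H^1_\alpha$ into every $L^{q_0}$ with $q_0\in[1,\infty)$, so the data norms $\|u_0\|_{L^{q_1}}$ and $\|u_0\|_{L^{\tilde q}}$ are majorised by $\|u_0\|_{L^1\cap H^1_\alpha}$. The most delicate point is precisely this $L^pL^q$ estimate: the intermediate exponent must be chosen \emph{differently} on the two time windows, and the assumption $1/p+1/q<1$ is exactly what guarantees both windows admit a choice of intermediate exponent strictly greater than one.
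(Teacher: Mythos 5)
Your proposal is correct and follows essentially the same route as the paper: reduce to $e^{\Delta_\alpha t}P_{ac}$, use spectral calculus on $P_{ac}L^2$ for the $H^1_\alpha$ and $H^{s+1}_\alpha$ norms, split time at $t=1$ with local estimates for short times, and invoke the decay of Theorem \ref{t.expDelta alpha} from an exponent near $L^1$ (your $\tilde q$, the paper's $\tfrac{1}{1-\varepsilon}$) for long times, with $\tfrac1p+\tfrac1q<1$ giving integrability at infinity. The only differences are cosmetic (factorizing at a fixed time $1/2$ instead of $t/2$, and using a smoothing $L^{q_1}\to L^q$ bound on $(0,1)$ rather than a same-exponent bound), so no further comment is needed.
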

\begin{proof}\hfill\\
Firstly, thanks to the identities
$$P_{ac}\Delta_\alpha=\Delta_\alpha P_{ac}, \quad P_{ac}e^{P_{ac}\Delta_\alpha t}u_0=e^{P_{ac}\Delta_\alpha t}P_{ac} u_0 $$
proven in Proposition \ref{p.proj.comm.} and \ref{prop.eq res=}, and the fact that 
$$ \|P_{ac} g\|_{L^h(\mathbb R^2)}\lesssim \|g\|_{L^h(\mathbb R^2)} \quad g\in L^h\left(\mathbb R^2\right),\quad h\in(1,\infty),  $$
we can suppose for simplicity $u_0=P_{ac} u_0$. For this reason, we can write
$$ \|u_0\|_{H^1_\alpha(\mathbb R^2)} \simeq \|(1-P_{ac}\Delta_\alpha)^{1/2} u_0\|_{L^2(\mathbb R^2)} . $$ 
Therefore,
$$ \left\|e^{P_{ac}\Delta_\alpha t} u_0\right\|_{L^\infty H^1_\alpha}\lesssim  \left\|(1-P_{ac}\Delta_\alpha)^{1/2}e^{P_{ac}\Delta_\alpha t} u_0\right\|_{L^\infty L^2} $$
$$ = \left\|e^{P_{ac}\Delta_\alpha t} (1-P_{ac}\Delta_\alpha)^{1/2}u_0\right\|_{L^\infty L^2} \lesssim  \left\|(1-P_{ac}\Delta_\alpha)^{1/2}u_0\right\|_{L^2(\mathbb R^2)}\lesssim \|u_0\|_{H^1_\alpha(\mathbb R^2)}. $$
For the other two estimates we consider only the case $t\ge 1$, since for $t<1$ we can repeat the argument of Lemma \ref{l.lin.es.loc.ex.1}. By the Spectral Theory, it holds
$$ \|g\|_{H^{s+1}_\alpha(\mathbb R^2)}\lesssim \|g\|_{L^2(\mathbb R^2)} + \|(-P_{ac}\Delta_\alpha)^{\frac{s+1}{2}}g\|_{L^2(\mathbb R^2)} $$
for any $g=P_{ac} g \in H^{s+1}_\alpha(\mathbb R^2)$. Moreover, 
$$ \left\|(-P_{ac}\Delta_\alpha)^{\frac{s+1}{2}}e^{P_{ac}\Delta_\alpha t} u_0\right\|_{L^2(\mathbb R^2)}\lesssim t^{-\frac{s+1}{2}}\|e^{P_{ac}\Delta_\alpha t/2}u_0\|_{L^2(\mathbb R^2)}\lesssim \|e^{P_{ac}\Delta_\alpha t/2}u_0\|_{L^2(\mathbb R^2)}. $$
So, we focus on the $L^2(\mathbb R^2)$-term: by Theorem \ref{t.expDelta alpha}
$$ \left\|e^{P_{ac}\Delta_\alpha t} u_0\right\|_{L^2(\mathbb R^2)}\lesssim t^{-\left(1-\varepsilon - \frac{1}{2}\right)}\|u_0\|_{L^\frac{1}{1-\varepsilon}(\mathbb R^2)}\lesssim t^{-\frac{1}{2}+\varepsilon}\|u_0\|_{L^1\cap L^2(\mathbb R^2)},  $$
for any $\varepsilon\in\left(0,\frac{1}{2}\right]$. In particular, since $r>2$, we can find $\varepsilon>0$ sufficiently small such that
$$ \left\|e^{P_{ac}\Delta_\alpha t} u_0\right\|_{L^r((1,\infty);L^2(\mathbb R^2))}\lesssim \|u_0\|_{L^1\cap H^1_\alpha(\mathbb R^2)}. $$
Finally, let us pass to the $L^pL^q$-norm: 
$$ \left\|e^{P_{ac}\Delta_\alpha t}u_0\right\|_{L^q(\mathbb R^2)}\lesssim t^{-\left(1-\varepsilon-\frac{1}{q}\right)}\|u_0\|_{L^\frac{1}{1-\varepsilon}(\mathbb R^2)}. $$
So, since 
$$ p\left(1-\frac{1}{q}\right)>1 \quad \forall q\in(1,\infty), $$
for any $\varepsilon>0$ sufficiently small it holds 
$$ \left\|e^{P_{ac}\Delta_\alpha t}u_0\right\|_{L^p((1,\infty);L^q(\mathbb R^2))}\lesssim \|u_0\|_{L^1\cap L^2(\mathbb R^2)}. $$
\end{proof}
\begin{lem}\label{l.lin.es.gl.ex.2}
Let $r>2$, $s\in\left(0,\frac{2}{r}\right)$, let $p,q\in(1,\infty)$ such that $\frac{1}{p}+\frac{1}{q}<1$, let 
$$ f\in L^rL^\frac{2}{1+\varepsilon}\cap L^1L^\frac{1}{1-\varepsilon} $$
for $\varepsilon>0$ sufficiently small, let 
$$ w(t)=\int_0^t e^{P_{ac}\Delta_\alpha(t-\tau)}f(\tau)d\tau, $$
then
$$ \|w\|_{L^\infty H^1_\alpha} + \|w\|_{L^r H^{s+1}_\alpha} + \|w\|_{L^pL^q}\lesssim \|f\|_{L^rL^\frac{2}{1+\varepsilon}} + \|f\|_{L^1L^\frac{1}{1-\varepsilon}}. $$
\end{lem}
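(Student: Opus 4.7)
The proof follows the template of Lemma \ref{l.lin.es.loc.ex.2} but must now be uniform in $t\in\mathbb R_+$. The key is the bilateral hypothesis on $f$: the norm $\|f\|_{L^rL^{2/(1+\varepsilon)}}$ controls the short-time (singular) contribution, while $\|f\|_{L^1L^{1/(1-\varepsilon)}}$, combined with the long-time decay of Theorem \ref{t.expDelta alpha}, controls the long-time contribution. Accordingly, I split
$$ w(t)=w_{\mathrm{near}}(t)+w_{\mathrm{far}}(t),\qquad w_{\mathrm{near}}(t)=\int_{\max(0,t-1)}^{t}e^{P_{ac}\Delta_\alpha(t-\tau)}f(\tau)\,d\tau, $$
so that $w_{\mathrm{far}}$ involves only $t-\tau\ge 1$. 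Since $w(0)=0$ and $P_{ac}\Delta_\alpha$ vanishes on $\operatorname{Span}\{\psi_\alpha\}$, one may reduce to $f=P_{ac}f$, so that Theorem \ref{t.expDelta alpha} applies and $\|\cdot\|_{H^\sigma_\alpha}\simeq\|(1-P_{ac}\Delta_\alpha)^{\sigma/2}\cdot\|_{L^2}$ for $\sigma\in[0,2]$.

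For $w_{\mathrm{near}}$ the argument of Lemma \ref{l.lin.es.loc.ex.2} applies verbatim on a window of length $1$: combining Theorem \ref{t.loc.sem.es.} (with the finite constant $C(1)$) and the spectral estimate $\|(1-P_{ac}\Delta_\alpha)^{\sigma}e^{P_{ac}\Delta_\alpha s}\|_{L^2\to L^2}\lesssim s^{-\sigma}$ reproduces the very kernels $(t-\tau)^{-(1+\varepsilon)/2}$, $(t-\tau)^{-(1+s+\varepsilon)/2}$, $(t-\tau)^{-((1+\varepsilon)/2-1/q)}$ of the local lemma; H\"older in time with exponent $r'<2$ and Young's convolution inequality on an interval of length $1$ then yield
$$ \|w_{\mathrm{near}}\|_{L^\infty H^1_\alpha}+\|w_{\mathrm{near}}\|_{L^rH^{s+1}_\alpha}+\|w_{\mathrm{near}}\|_{L^pL^q}\lesssim\|f\|_{L^rL^{2/(1+\varepsilon)}} $$
uniformly in $t$, provided $\varepsilon$ is small relative to $r,s,p,q$.

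For $w_{\mathrm{far}}$ the new ingredient is Theorem \ref{t.expDelta alpha}: applied with $(q_0,p_0)=(1/(1-\varepsilon),2)$ it gives $\|e^{P_{ac}\Delta_\alpha(t-\tau)}f(\tau)\|_{L^2}\lesssim (t-\tau)^{-(1/2-\varepsilon)}\|f(\tau)\|_{L^{1/(1-\varepsilon)}}$. Derivatives are introduced by the splitting $e^{P_{ac}\Delta_\alpha(t-\tau)}=e^{P_{ac}\Delta_\alpha(t-\tau)/2}\cdot e^{P_{ac}\Delta_\alpha(t-\tau)/2}$: since $(s+1)/2<1$ and $(t-\tau)/2\ge 1/2$, spectral calculus forces $\|(1-P_{ac}\Delta_\alpha)^{(s+1)/2}e^{P_{ac}\Delta_\alpha(t-\tau)/2}\|_{L^2\to L^2}=O(1)$ and the second factor produces the decay. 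For the $L^q$ estimate I apply Theorem \ref{t.expDelta alpha} directly with $(q_0,p_0)=(1/(1-\varepsilon),q)$, producing kernel $(t-\tau)^{-(1-\varepsilon-1/q)}$. On $(1,\infty)$ these kernels lie in the correct Lebesgue space for Young's convolution against $\|f(\cdot)\|_{L^{1/(1-\varepsilon)}}\in L^1$: trivially in $L^\infty$ for the $L^\infty H^1_\alpha$ estimate; in $L^r$ because $r(1/2-\varepsilon)>1$ when $r>2$ and $\varepsilon$ is small; and in $L^p$ because $p(1-\varepsilon-1/q)>1$ when $1/p+1/q<1$ and $\varepsilon$ is small.

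The only substantive obstacle is bookkeeping: all the smallness conditions on $\varepsilon$ extracted from both the near-part kernels on $(0,1)$ and the far-part kernels on $(1,\infty)$ must be satisfied simultaneously. Each threshold is strictly positive precisely under the hypotheses $r>2$, $s<2/r$ and $1/p+1/q<1$, so their intersection is a nonempty interval $(0,\varepsilon_0(r,s,p,q))$, and each individual verification is routine and of the same flavour as the calculations in Lemmas \ref{l.lin.es.loc.ex.2} and \ref{l.lin.es.gl.ex.1}.
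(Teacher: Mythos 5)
Your proof takes essentially the same route as the paper's: the same splitting of the Duhamel integral at $t-\tau=1$ (after reducing to $f=P_{ac}f$), with the near part handled by the local-lemma kernels and the $L^rL^{2/(1+\varepsilon)}$ norm, and the far part handled via Theorem \ref{t.expDelta alpha}, spectral calculus for the extra derivatives, and Young's inequality on $(1,\infty)$ against the $L^1L^{1/(1-\varepsilon)}$ norm, under the same exponent conditions $r>2$ and $\frac{1}{p}+\frac{1}{q}<1$. The only difference (writing $e^{P_{ac}\Delta_\alpha(t-\tau)}$ as two half-time factors to absorb $(1-P_{ac}\Delta_\alpha)^{(s+1)/2}$, instead of invoking $t-\tau\ge1$ directly as the paper does) is cosmetic, so the proposal matches the paper's argument.
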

\begin{proof}\hfill\\
As before we can suppose $f=P_{ac} f$ and $t\ge 1$, so that 
$$ \left\|\int_0^t e^{P_{ac}\Delta_\alpha(t-\tau)}f(\tau)d\tau\right\|_{H^1_\alpha(\mathbb R^2)} $$
$$ \le \left\|\int_0^{t-1} e^{P_{ac}\Delta_\alpha(t-\tau)}f(\tau)d\tau\right\|_{H^1_\alpha(\mathbb R^2)} + \left\|\int_{t-1}^t e^{P_{ac}\Delta_\alpha(t-\tau)}f(\tau)d\tau\right\|_{H^1_\alpha(\mathbb R^2)}. $$
The second term, since $t-\tau\in(0,1)$, can be done as for the local case, so we focus only on the first term: by Spectral Theory
$$ \left\|\int_0^{t-1} e^{P_{ac}\Delta_\alpha(t-\tau)}f(\tau)d\tau\right\|_{H^1_\alpha(\mathbb R^2)} $$
$$ \lesssim \left\|\int_0^t e^{P_{ac}\Delta_\alpha(t-\tau)}f(\tau)d\tau\right\|_{L^2(\mathbb R^2)} + \left\|\int_0^t (-P_{ac}\Delta_\alpha)^{1/2}e^{P_{ac}\Delta_\alpha(t-\tau)}f(\tau)d\tau\right\|_{L^2(\mathbb R^2)} $$
$$ \lesssim \int_0^{t-1} (t-\tau)^{-1+\varepsilon+\frac{1}{2}}\|f(\tau)\|_{L^\frac{1}{1-\varepsilon}(\mathbb R^2)}d\tau + \int_0^{t-1} (t-\tau)^{-1+\varepsilon+1}\|f(\tau)\|_{L^\frac{1}{1-\varepsilon}(\mathbb R^2)}d\tau \le \|f\|_{L^1L^\frac{1}{1-\varepsilon}},  $$
where we used that $t-\tau\ge 1$ for $\tau\le t-1$. Let us pass to the $L^rH^{s+1}_\alpha$-norm. As before, it is sufficient to estimate 
$$ \left\|\int_0^{t-1} e^{P_{ac}\Delta_\alpha(t-\tau)}f(\tau)d\tau\right\|_{L^r((1,\infty);H^{s+1}_\alpha(\mathbb R^2))}. $$
Again, by Spectral Theory and the semigroup estimates from Theorem \ref{t.expDelta alpha}, since $t-\tau\ge 1$, it holds
$$ \left\|\int_0^{t-1} e^{P_{ac}\Delta_\alpha(t-\tau)}f(\tau)d\tau\right\|_{L^r((1,\infty);H^{s+1}_\alpha(\mathbb R^2))} $$
$$ \lesssim \left\|\int_0^{t-1} (t-\tau)^{-\left(1-\varepsilon-\frac{1}{2}\right)}\|f(\tau)\|_{L^\frac{1}{1-\varepsilon}(\mathbb R^2)}d\tau\right\|_{L^r((1,\infty))} $$
$$ \lesssim \left\|t^{-\frac{1-2\varepsilon}{2}}\right\|_{L^r((1,\infty))}\|f\|_{L^1L^\frac{1}{1-\varepsilon}} \lesssim \|f\|_{L^1L^\frac{1}{1-\varepsilon}}, $$
for $\varepsilon>0$ sufficiently small, where we used Young's inequality and the condition $r>2$. Finally,
$$ \left\|\int_0^{t-1} e^{P_{ac}\Delta_\alpha(t-\tau)}f(\tau)d\tau\right\|_{L^p((1,\infty);L^q(\mathbb R^2))}\lesssim \left\|\int_0^{t-1} (t-\tau)^{-\left(1-\varepsilon - \frac{1}{q}\right)}\|f(\tau)\|_{L^\frac{1}{1-\varepsilon}(\mathbb R^2)}d\tau\right\|_{L^p((1,\infty))} $$
$$ \lesssim  \left\|t^{-\left(1-\varepsilon-\frac{1}{q}\right)}\right\|_{L^p((1,\infty))}\|f\|_{L^1L^\frac{1}{1-\varepsilon}}\lesssim \|f\|_{L^1L^\frac{1}{1-\varepsilon}}, $$
since
$$ p\left(1-\varepsilon-\frac{1}{q}\right)<1 $$
for $p>\frac{q}{q-1}$ and $\varepsilon>0$ sufficiently small.
\end{proof}
So we get the existence of a solution for \eqref{gl.ex.lin.sys.}:
\begin{prop}\label{p.gl.lin.es.}
Let $\alpha\in\mathbb R$, $r>2$, $s\in\left(0,\frac{2}{r}\right)$, let $p,q\in(1,\infty)$ such that $\frac{1}{p}+\frac{1}{q}<1$, let $u_0\in L^1\cap H^1_\alpha(\mathbb R^2)$, then there is $\varepsilon>0$ sufficiently small such that, for
$$ f\in L^rL^\frac{2}{1+\varepsilon}\cap L^1L^\frac{1}{1-\varepsilon}, $$
the system \eqref{gl.ex.lin.sys.} admits a unique solution $u$ such that
$$ \|u\|_{L^\infty H^1_\alpha} + \|u\|_{L^r H^{s+1}_\alpha} + \|u\|_{L^pL^q}\lesssim \|u_0\|_{L^1\cap H^1_\alpha(\mathbb R^2)} + \|f\|_{L^rL^\frac{2}{1+\varepsilon}} + \|f\|_{L^1L^\frac{1}{1-\varepsilon}}. $$
\end{prop}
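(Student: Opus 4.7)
The plan is to deduce Proposition \ref{p.gl.lin.es.} essentially as a corollary of the two preceding lemmas, by expressing the solution through its Duhamel formula
\begin{equation}
    u(t)= e^{P_{ac}\Delta_\alpha t}P_{ac}u_0  + \int_0^t e^{P_{ac}\Delta_\alpha(t-\tau)}P_{ac}f(\tau)\,d\tau,
\end{equation}
and then controlling each summand separately. The first term is handled directly by Lemma \ref{l.lin.es.gl.ex.1}, which gives
$$ \bigl\|e^{P_{ac}\Delta_\alpha t}P_{ac}u_0\bigr\|_{L^\infty H^1_\alpha\,\cap\,L^r H^{s+1}_\alpha\,\cap\,L^pL^q}\lesssim \|u_0\|_{L^1\cap H^1_\alpha(\mathbb R^2)}. $$
For the forcing integral, since $\|P_{ac}f(\tau)\|_{L^h(\mathbb R^2)}\lesssim\|f(\tau)\|_{L^h(\mathbb R^2)}$ for every $h\in(1,\infty)$, the projected source $P_{ac}f$ lies in the same mixed Lebesgue spaces as $f$, and hence Lemma \ref{l.lin.es.gl.ex.2} applies to yield
$$ \left\|\int_0^t e^{P_{ac}\Delta_\alpha(t-\tau)}P_{ac}f(\tau)\,d\tau\right\|_{L^\infty H^1_\alpha\,\cap\,L^r H^{s+1}_\alpha\,\cap\,L^pL^q}\lesssim \|f\|_{L^rL^\frac{2}{1+\varepsilon}}+\|f\|_{L^1L^\frac{1}{1-\varepsilon}}. $$
Adding these two estimates gives the claimed a priori bound. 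The parameter $\varepsilon>0$ is chosen so small that all the restrictions appearing in Lemmas \ref{l.lin.es.gl.ex.1} and \ref{l.lin.es.gl.ex.2} (in particular those coming from the integrability of the time kernels $t^{-(1-\varepsilon-1/q)}$ and $t^{-(1-2\varepsilon)/2}$ against the conjugate Lebesgue exponents of $r$ and $p$) are simultaneously satisfied; this is possible because the strict inequalities $r>2$ and $1/p+1/q<1$ are open conditions in $\varepsilon$.

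For existence, the Duhamel formula itself defines a candidate solution that, by Proposition \ref{p.proj.comm.} and Proposition \ref{prop.eq res=}, automatically satisfies $P_{ac}u(t)=u(t)$ for all $t$; combined with the preceding bounds this shows $u\in L^\infty H^1_\alpha\cap L^rH^{s+1}_\alpha\cap L^pL^q$. That $u$ actually solves \eqref{gl.ex.lin.sys.} is verified by differentiating the Duhamel representation in $t$ on the dense subspace where $P_{ac}u_0\in\mathcal D(\Delta_\alpha)$ and $P_{ac}f$ is smooth in time, and then passing to the limit using the linear estimate we have just proved. Uniqueness follows by linearity: if $u_1,u_2$ are two solutions, their difference $v=u_1-u_2$ solves \eqref{gl.ex.lin.sys.} with vanishing data $u_0=0$ and $f=0$, so the a priori bound forces $\|v\|_{L^\infty H^1_\alpha}=0$.

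I do not expect any serious obstacle here, since the proof is a routine combination of the two lemmas already proved and the standard Duhamel principle. The only point requiring a little care is making sure that a \emph{single} $\varepsilon>0$ can be chosen that works for all the exponent conditions imposed on $r$, $s$, $p$ and $q$ simultaneously; this is a finite intersection of open conditions on $\varepsilon$, each strictly satisfied at $\varepsilon=0$, and so is automatic.
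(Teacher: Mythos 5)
Your argument is correct and is essentially the paper's own (implicit) proof: the paper states Proposition \ref{p.gl.lin.es.} as a direct consequence of Lemmas \ref{l.lin.es.gl.ex.1} and \ref{l.lin.es.gl.ex.2} applied to the two terms of the Duhamel formula, using the $L^h$-boundedness of $P_{ac}$ for $h\in(1,\infty)$, exactly as you do. Your extra remarks on choosing a single small $\varepsilon$ and on uniqueness by linearity are consistent with the paper's framework, where solutions are defined through the Duhamel representation.
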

We can say something more about the system \eqref{gl.ex.lin.sys.}: the presence of $P_{ac}f$ and of $P_{ac}u_0$ forces the solution to satisfy the condition $P_{ac}u=u$: due to the identity 
\begin{equation}\label{proj.id.2}
P_{ac}e^{P_{ac}\Delta_\alpha t} g= e^{P_{ac}\Delta_\alpha t}P_{ac}g \quad  g\in L^h\left(\mathbb R^2\right),\quad h\in(1,\infty),
\end{equation}
we notice that
$$ u(t)=e^{P_{ac}\Delta_\alpha t}P_{ac}u_0 + \int_0^t e^{P_{ac}\Delta_\alpha(t-\tau)}P_{ac}f(\tau)d\tau = P_{ac}\left[e^{P_{ac}\Delta_\alpha t}u_0 + \int_0^te^{P_{ac}\Delta_\alpha (t-\tau)}f(\tau)d\tau\right].  $$
In particular, since $P_{ac}\Delta_\alpha =\Delta_\alpha P_{ac}$, it holds
$$ (\partial_t-P_{ac}\Delta_\alpha)u=(\partial_t-\Delta_\alpha)u. $$
So, $u$ solves
\begin{equation}\label{sys.aux.}
    \left\{\begin{array}{ll}
    (\partial_t-\Delta_\alpha)u=P_{ac}f & \mathbb R_+\times\mathbb R^2 \\
    P_{ac}u=u & \mathbb R_+\times \mathbb R^2 \\
    u(0)=P_{ac}u_0 & \mathbb R^2.
\end{array}\right.
\end{equation}
In general, we are not allowed to delete the projection on $f$ in the right hand side of the equation \eqref{sys.aux.}. However, it is possible if we introduce a Lagrange multiplier associated with the condition $P_{ac}u=u$:
\begin{prop}\label{p.ex.Lagr.sys.}
Let $\alpha\in\mathbb R$, $r>2$, $s\in\left(0,\frac{2}{r}\right)$, let $p,q\in(1,\infty)$ such that $\frac{1}{p}+\frac{1}{q}<1$, let $u_0$ and $f$ as in Proposition \ref{p.gl.lin.es.}, then there is a unique couple $(u,\rho)$ which solves the system
\begin{equation}\label{lin.gl.sys.Lagr.}
\left\{\begin{array}{ll}
    (\partial_t-\Delta_\alpha)u + \psi_\alpha \rho=f  & \mathbb R_+\times \mathbb R^2 \\
    P_{ac}u=u & \mathbb R_+\times \mathbb R^2 \\
    u(0)=P_{ac}u_0 & \mathbb R^2,
\end{array}\right. 
\end{equation}
such that 
$$ \|u\|_{L^\infty H^1_\alpha} + \|u\|_{L^r H^{s+1}_\alpha} + \|u\|_{L^pL^q} + \|\rho\|_{L^r\cap L^1(\mathbb R_+)}\lesssim \|u_0\|_{L^1\cap H^1_\alpha(\mathbb R^2)} + \|f\|_{L^rL^\frac{2}{1+\varepsilon}} + \|f\|_{L^1L^\frac{1}{1-\varepsilon}}, $$
where $\psi_\alpha$ was defined in \eqref{eigenfuntion}. Moreover, let
$$ u\in L^\infty H^1_\alpha\cap L^r H^{s+1}_\alpha \cap L^pL^q, $$
then $u$ solves the system \eqref{gl.ex.lin.sys.} if and only if $(u,\rho)$ solves \eqref{lin.gl.sys.Lagr.} with 
$$ \rho(t)=\left<f(t),\psi_\alpha\right>_{L^2(\mathbb R^2)}\quad t>0. $$
\end{prop}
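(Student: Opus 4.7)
The plan is to use the spectral decomposition $I = P_d + P_{ac}$ to split \eqref{lin.gl.sys.Lagr.} into two independent pieces: one that identifies $\rho$ explicitly, and one that is exactly the system \eqref{gl.ex.lin.sys.} already solved in Proposition \ref{p.gl.lin.es.}. Concretely, I first apply $P_d$ to the equation in \eqref{lin.gl.sys.Lagr.}. Using that $P_d$ commutes with $\partial_t$ and with $\Delta_\alpha$ (Proposition \ref{p.proj.comm.}), the constraint $P_{ac}u=u$ forces $P_d u\equiv 0$, so the left-hand side reduces to $\psi_\alpha\rho(t)$. Since $P_d f=\langle f,\psi_\alpha\rangle_{L^2}\psi_\alpha$, matching coefficients gives $\rho(t)=\langle f(t),\psi_\alpha\rangle_{L^2}$, which both determines $\rho$ and proves the second claim of the proposition.

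Next I apply $P_{ac}$ to the equation. The term $P_{ac}(\psi_\alpha\rho)$ vanishes because $\psi_\alpha\in\mathrm{Ran}\,P_d$, and the commutation of $P_{ac}$ with $\partial_t-\Delta_\alpha$ together with the constraint $P_{ac}u=u$ reduces the equation to $(\partial_t-P_{ac}\Delta_\alpha)u=P_{ac}f$ with initial datum $P_{ac}u_0$, i.e.\ precisely the system \eqref{gl.ex.lin.sys.}. Existence, uniqueness and the estimate for $u$ in $L^\infty H^1_\alpha\cap L^r H^{s+1}_\alpha\cap L^pL^q$ then follow directly from Proposition \ref{p.gl.lin.es.}. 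Conversely, given a solution $u$ of \eqref{gl.ex.lin.sys.} with $P_{ac}u=u$ (which, as already noted after Proposition \ref{p.gl.lin.es.}, is automatic), setting $\rho(t):=\langle f(t),\psi_\alpha\rangle_{L^2}$ and using $\Delta_\alpha u=P_{ac}\Delta_\alpha u$ yields
\begin{equation*}
(\partial_t-\Delta_\alpha)u+\psi_\alpha\rho=(\partial_t-P_{ac}\Delta_\alpha)u+P_d f=P_{ac}f+P_d f=f,
\end{equation*}
establishing the equivalence of the two formulations.

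The only remaining point is to bound $\rho$. By Hölder's inequality,
\begin{equation*}
|\rho(t)|\le\|\psi_\alpha\|_{L^{\frac{2}{1-\varepsilon}}(\mathbb R^2)}\|f(t)\|_{L^{\frac{2}{1+\varepsilon}}(\mathbb R^2)}\quad\text{and}\quad|\rho(t)|\le\|\psi_\alpha\|_{L^{\frac{1}{\varepsilon}}(\mathbb R^2)}\|f(t)\|_{L^{\frac{1}{1-\varepsilon}}(\mathbb R^2)}.
\end{equation*}
Because $\psi_\alpha=G_{E_\alpha}/\|G_{E_\alpha}\|_{L^2}$ is a multiple of $K_0(\sqrt{E_\alpha}|x|)$, which has only a logarithmic singularity at the origin and exponential decay at infinity, $\psi_\alpha\in L^h(\mathbb R^2)$ for every $h\in[1,\infty)$; thus both factors are finite, and integrating in time gives $\rho\in L^r(\mathbb R_+)$ from the first bound and $\rho\in L^1(\mathbb R_+)$ from the second, with the required estimate
\begin{equation*}
\|\rho\|_{L^r\cap L^1(\mathbb R_+)}\lesssim\|f\|_{L^rL^{\frac{2}{1+\varepsilon}}}+\|f\|_{L^1L^{\frac{1}{1-\varepsilon}}}.
\end{equation*}
There is no real obstacle here; the main subtlety is simply keeping track of the projections in the first step, since it is the orthogonality of $\psi_\alpha$ to $\mathrm{Ran}\,P_{ac}$ that both identifies $\rho$ uniquely and reduces the problem to one already solved.
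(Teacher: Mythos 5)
Your proposal is correct and follows essentially the same route as the paper: reduce to the already-solved system \eqref{gl.ex.lin.sys.} via Proposition \ref{p.gl.lin.es.}, identify $\rho(t)=\langle f(t),\psi_\alpha\rangle_{L^2}$ by projecting onto the discrete part, and bound $\rho$ in $L^1\cap L^r$ by H\"older using $\psi_\alpha\in L^h(\mathbb R^2)$ for all $h\in[1,\infty)$. The only cosmetic difference is that you organize the argument by applying $P_d$ and $P_{ac}$ directly to the equation, whereas the paper phrases the same facts through the Duhamel formula and the identity $f=P_{ac}f+\rho\,\psi_\alpha$; the content is identical.
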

\begin{proof}\hfill\\
Let $u$ be the solution from Proposition \ref{p.gl.lin.es.}. We have already shown that $u$ solves the system \eqref{sys.aux.}, that is
$$ (\partial_t-\Delta_\alpha)u=P_{ac}f=f-P_df. $$
We recall that 
$$ P_df=\left<f,\psi_\alpha\right>\psi_\alpha. $$
Therefore it is sufficient to notice that 
$$ |\left<f,\psi_\alpha\right>|\lesssim \min\left\{\|f\|_{L^\frac{2}{1+\varepsilon}};\|f\|_{L^\frac{1}{1-\varepsilon}}\right\}, $$
where we used that $\psi_\alpha\in L^h(\mathbb R^2)$ for any $h\in[1,\infty)$. Let us prove that, if $(u,\rho)$ is a solution of \eqref{lin.gl.sys.Lagr.}, then $u$ solves \eqref{gl.ex.lin.sys.}: $P_{ac}u=u$, so 
$$ u(t)=P_{ac}u(t)=P_{ac}e^{\Delta_\alpha t}u_0 + P_{ac}\int_0^te^{\Delta_\alpha(t-\tau)}(f + \psi_\alpha \rho)(\tau)d\tau $$
$$ = e^{P_{ac}\Delta_\alpha t}P_{ac}u_0 +  \int_0^te^{P_{ac}\Delta_\alpha(t-\tau)}P_{ac}f(\tau)d\tau, $$
where we used the identity \eqref{proj.id.2}. In particular $u$ solves \eqref{gl.ex.lin.sys.}. This also proved the uniqueness of the solutions of \eqref{lin.gl.sys.Lagr.}: let $(u_1,\rho_1)$ and $(u_2,\rho_2)$ be two solutions of \eqref{lin.gl.sys.Lagr.}, then we have just proved that $u_1=u_2$ is the only solution of \eqref{gl.ex.lin.sys.}. Finally
$$ f=(\partial_t-\Delta_\alpha)u_j  + \rho_j\psi_\alpha=(\partial_t-P_{ac}\Delta_\alpha)u_j + \rho_j\psi_\alpha=P_{ac}f + \rho_j\psi_\alpha. $$
Therefore 
$$ \rho_j\psi_\alpha=P_df \quad j=1,2. $$
\end{proof}

\section{Proof of Theorem \ref{t.ex.loc.main}}

Let us consider the system 
\begin{equation}\label{loc.ex.sys.}
\left\{\begin{array}{ll}
     (\partial_t-\Delta_\alpha)u= a\cdot \nabla(|u|^\gamma) & (0,T)\times \mathbb R^2 \\
     u(0)=u_0 & \mathbb R^2.
\end{array}\right. 
\end{equation}
To prove the existence of a solution for \eqref{loc.ex.sys.}, we want to solve the Duhamel equation: 
$$ u(t)=e^{\Delta_\alpha t}u_0 + \int_0^t e^{\Delta_\alpha(t-\tau)}a\cdot\nabla(|u|^\gamma)(\tau)d\tau. $$
We consider the map 
$$ \Phi(v)=e^{\Delta_\alpha t}u_0 + \int_0^t e^{\Delta_\alpha(t-\tau)}a\cdot\nabla(|v|^\gamma)(\tau)d\tau. $$
Thanks to the a priori estimate we found in Subsection \ref{subsec.en.es.}, we are going to prove that $\Phi\colon V\to V$ is a contraction, for a proper choice of $V\subseteq L^2(\mathbb R^2)$ Banach space. 
\begin{proof}[Proof of Theorem \ref{t.ex.loc.main}]\hfill\\
Let us fix $s<\frac{2}{r}$, $p\ge 1$, $q\in[2,\infty)$ and let us define the spaces
$$ X_T= L^\infty_T H^1_\alpha\cap L^r_T H^{s+1}_\alpha\cap L^p_TL^q $$
and
$$ Y_T=\left\{v\in X_T\mid v(0)=v_0\right\}, $$
endowed with the norm 
$$ \|v\|_{Y_T}=\|v\|_{L^\infty_TH^s_\alpha} + \|v\|_{L^2_TH^{s+1}_\alpha}+\|v\|_{L^p_TL^q}. $$
Let $\Phi$ be the map such that $\Phi(v)=u$ for $v\in Y_T$, where $u$ solves
$$ \left\{\begin{array}{ll}
    (\partial_t-\Delta_\alpha)u=a\cdot\nabla (|v|^\gamma) & (0,T)\times\mathbb R^2 \\
    u(0)=u_0 & \mathbb R^2. 
\end{array}\right. $$
Firstly, we want to prove that $\Phi\colon Y_T\to Y_T$. Since $v(t)\in H^{s+1}_\alpha(\mathbb R^2)$ for a.e. $t\in(0,T)$, we have from Remark \ref{rem.Lap-alpha.dom.} that 
$$ v(t,x)=\phi_\lambda(t,x) + q(t)G_\lambda(x) \quad \text{for a.e.}\quad  (t,x)\in(0,T)\times\mathbb R^2, $$
with $\phi_\lambda(t)\in H^{s+1}(\mathbb R^2)$ for a.e. $t\in(0,T)$. Moreover
$$ |\nabla(|v|^\gamma)|\lesssim |\nabla v||v|^{\gamma-1}\le |\nabla \phi_\lambda||v|^{\gamma-1} + |q||\nabla G_\lambda||v|^{\gamma-1}. $$
We want to use the linear estimates from Lemma \ref{l.lin.es.loc.ex.1} and \ref{l.lin.es.loc.ex.2}. So, we need to prove that 
$$  \||\nabla \phi_\lambda||v|^{\gamma-1}\|_{L^r_TL^\frac{2}{1+\varepsilon}} + \||q||\nabla G_\lambda||v|^{\gamma-1}\|_{L^r_TL^\frac{2}{1+\varepsilon}}<+\infty, $$
for some small $\varepsilon>0$. 
$$ \||\nabla \phi_\lambda| |v|^{\gamma-1}\|_{L^r_TL^\frac{2}{1+\varepsilon}}\le \left\|\|\nabla \phi_\lambda\|_{L^2(\mathbb R^2)} \|v\|_{L^\frac{2(\gamma-1)}{\varepsilon}(\mathbb R^2)}\right\|_{L^r((0,T))}. $$
Due to Remark \ref{rem.Sob.emb.}, $H^1_\alpha(\mathbb R^2)\hookrightarrow L^h(\mathbb R^2)$ for any $h\in[2,\infty)$, so
$$ \left\|\|\nabla \phi_\lambda\|_{L^2(\mathbb R^2)} \|v\|_{L^\frac{2(\gamma-1)}{\varepsilon}(\mathbb R^2)}\right\|_{L^r((0,T))}\lesssim \|v\|_{L^\infty_T H^1_\alpha}^{\gamma-1}\|\nabla \phi_\lambda\|_{L^r_TL^2} $$
$$ \le T^{1/r}\|v\|_{L^\infty_T H^1_\alpha}^\gamma\lesssim T^{1/r}\|v\|_Y^\gamma. $$
On the other hand,
$$ \||q||\nabla G_\lambda| |v|^{\gamma-1}\|_{L^r_TL^\frac{2}{1+\varepsilon}}\le \left\||q|\|\nabla G_\lambda\|_{L^\frac{2}{1+\frac{1}{2}\varepsilon}(\mathbb R^2)} \|v\|_{L^\frac{4(\gamma-1)}{\varepsilon}(\mathbb R^2)}^{\gamma-1}\right\|_{L^r((0,T))} $$
$$ \lesssim \|v\|_{L^\infty_T H^1_\alpha}^{\gamma-1}\|v\|_{L^r_TH^{1+\delta}_\alpha}, $$
for any $\delta>0$. In particular, choosing $\delta>0$ small, by interpolation it holds
$$ \|v\|_{H^{1+\delta}(\mathbb R^2)}\lesssim \|v\|_{H^1(\mathbb R^2)}^\theta\|v\|_{H^{s+1}(\mathbb R^2)}^{1-\theta}, $$
with $\theta=1-\frac{\delta}{s}$. Then
$$ \|v\|_{L^\infty_T H^1_\alpha}^{\gamma-1}\|v\|_{L^r_TH^{1+\delta}_\alpha}\lesssim T^\beta \|v\|_Y^\gamma, $$
for some $\beta>0$. So
$$ \|\nabla(|v|^\gamma)\|_{L^rL^\frac{2}{1+\varepsilon}}\lesssim T^\beta\|v\|_Y^\gamma, $$
and from Proposition \ref{p.lin.loc.es.}, we have that 
\begin{equation}\label{proof.loc.es.1}
    \|\Phi(v)\|_{Y}\le CT^\beta \left[\|u_0\|_{H^1_\alpha(\mathbb R^2)} + \|v\|^\gamma_Y\right]
\end{equation}
for some $C,\beta >0$. Moreover
$$ \left|\nabla(|v_1|^\gamma)-\nabla(|v_2|^\gamma)\right|=\gamma\left|v_1|v_1|^{\gamma-2}\nabla v_1-v_2|v_2|^{\gamma-2}\nabla v_2\right| $$
$$ \le \gamma\left[|v_1-v_2||v_1|^{\gamma-2}|\nabla v_1| + (\gamma-2)|v_2|^{\gamma-2}|v_1-v_2||\nabla v_1| + |v_2|^{\gamma-1}|\nabla(v_1-v_2)|\right]. $$
So, similarly, we get
\begin{equation}\label{proof.loc.es.2}
    \|\Phi(v_1)-\Phi(v_2)\|_{Y_T}\le  MT^\beta \left(\|v_1\|_{Y_T}^{\gamma-1} + \|v_2\|_{Y_T}^{\gamma-1}\right)\|v_1-v_2\|_{Y_T},
\end{equation}
for some $M,\beta>0$. Finally, let us define the space
$$ Z_\omega=\{v\in Y_T\mid \|v\|_{Y_T}\le \omega\}. $$
We want to prove that $\Phi\colon Z_\omega\to Z_\omega$ for a suitable $\omega, T>0$. This comes from \eqref{proof.loc.es.1}:
$$ \|\Phi(v)\|_{Y_T}\le C\left[\|u_0\|_{H^1_\alpha(\mathbb R^2)} + T^\beta \omega^\gamma\right]. $$
So, choosing 
$$ \omega = 2C\|u_0\|_{H^1_\alpha(\mathbb R^2)} $$
and $T>0$ sufficiently small such that
$$ T^\beta\omega^\gamma<\|u_0\|_{H^1_\alpha(\mathbb R^2)}, $$
we get that $\Phi\colon Z_\omega \to Z_\omega$. Moreover, from \eqref{proof.loc.es.2}
$$ \|\Phi(v_1)-\Phi(v_2)\|_{Y_T}\le  2\omega^{\gamma-1} MT^\beta\|v_1-v_2\|_{Y_T}. $$
So, for $T>0$ sufficiently small, we get that $\Phi\colon Z_\omega\to Z_\omega$ is a contraction. Finally, by the Fixed Point Theorem, there is a unique solution $u\in Z_\omega$ for the system \eqref{loc.ex.sys.}. 

Let us prove now that $u$ is unique in $Y_T$: let $u_1, u_2\in Y_T$ and let us denote 
$$ R_j=\|u_j\|_{Y_T}\quad j=1,2 . $$
In particular, $u_1$ and $u_2$ are fixed points for the function $\Phi$. Then, for any $T_0<T$, by the estimate \eqref{proof.loc.es.2} we get
$$ \|u_1-u_2\|_{Y_{T_0}}\lesssim T_0^\beta \left(R_1^{\gamma-1}+R_2^{\gamma-2}\right)\|u_1-u_2\|_{Y_{T_0}}. $$
So, if we choose $T_0=T_0(R_1,R_2)$ sufficiently small, we get that $u_1(t)=u_2(t)$ for a.e. $t\in(0,T_0)$. Since the choice of $T_0$ does not depend on $u_0$, we can repeat the argument choosing as starting point $T_0$. In this way, in a finite number of steps, we prove that $u_1=u_2$ in $Y_T$.

Finally, to conclude, it is sufficient to notice that the estimate \eqref{proof.loc.es.1} works for any $s\in\left(0,\frac{2}{r}\right)$ and for any $p\ge1$ and $q\in[2,\infty)$, so we can prove analogously that 
$$ u\in L^r_TH^{s+1}_\alpha\cap L^p_TL^q \quad \forall s\in\left(0,\frac{2}{r}\right),\quad p\ge 1,\quad q\in[2,\infty). $$
\end{proof}

\section{Global Existence}
\subsection{Proof of Theorems \ref{t.ex.gl.m.} and \ref{t.ex.gl.m.2}}

Let us consider the systems
\begin{equation}\label{sys.gl.}
    \left\{\begin{array}{ll}
        (\partial_t-P_{ac}\Delta_\alpha)u=P_{ac}(a\cdot\nabla)(|u|^\gamma) & \mathbb R_+\times\mathbb R^2 \\
        P_{ac}u=u & \mathbb R_+\times\mathbb R^2 \\
        u_0=P_{ac}u_0 & \mathbb R^2
    \end{array}\right.
\end{equation}
and 
\begin{equation}\label{sys.gl.2}
    \left\{\begin{array}{ll}
        (\partial_t-\Delta_\alpha)u + \rho\psi_\alpha = a\cdot\nabla(|u|^\gamma) & \mathbb R_+\times\mathbb R^2 \\
        P_{ac}u=u & \mathbb R_+\times\mathbb R^2 \\
        u_0=P_{ac}u_0 & \mathbb R^2,
    \end{array}\right.
\end{equation}
where $\psi_\alpha$ is the eigenfunction \eqref{eigenfuntion} for $\Delta_\alpha$. The strategy for the system \eqref{sys.gl.} is, as before, to apply a contraction argument on the map
$$ \Phi(v)=e^{P_{ac}\Delta_\alpha t}P_{ac}u_0 + \int_0^t e^{P_{ac}\Delta_\alpha(t-\tau)}P_{ac}(a\cdot \nabla)(|v|^\gamma)d\tau. $$
Once we prove the existence for the system \eqref{sys.gl.}, thanks to Proposition \ref{p.gl.lin.es.}, we have that 
$$ \rho(t)=\left<a\cdot \nabla(|u|^\gamma),\psi_\alpha\right>_{L^2(\mathbb R^2)} $$
and we get the existence for the system \eqref{sys.gl.2}.
\begin{proof}[Proof of Theorems \ref{t.ex.gl.m.} and \ref{t.ex.gl.m.2}]\hfill\\
Let 
$$ X= L^\infty H^1_\alpha\cap L^r H^{s+1}_\alpha L^{p_1}L^{q_1}\cap L^{p_2}L^{q_2}, $$
for $p_1,p_2\in (1,\infty)$ and $q_1,q_2\in[2,\infty)$ with
$$ \frac{1}{p_i} + \frac{1}{q_i} < 1\quad i=1,2 $$
that will be choose properly later. Moreover, let us define 
$$ Y=\left\{v\in X\mid v(0)=v_0\right\}, $$
endowed with the norm 
$$ \|v\|_{Y}=\|v\|_{L^\infty_TH^s_\alpha} + \|v\|_{L^2_TH^{s+1}_\alpha} + \|v\|_{L^{p_1}L^{q_1}} + \|v\|_{L^{p_2}L^{q_2}}. $$
We consider now the map $\Phi(v)=u$ for $v\in Y$, where $u$ solves
$$ \left\{\begin{array}{ll}
    (\partial_t-P_{ac}\Delta_\alpha)u=P_{ac}(a\cdot \nabla) (|v|^\gamma) & \mathbb R_+\times\mathbb R^2 \\
    u(0)=P_{ac}u_0 & \mathbb R^2. 
\end{array}\right. $$
As before, let us start proving that $\psi\colon Y\to Y$. From Remark \ref{rem.Lap-alpha.dom.} 
$$ v(t,x)=\phi_\lambda(t,x) + q(t)G_\lambda(x) \quad \text{for a.e.}\quad  (t,x)\in\mathbb R_+\times\mathbb R^2, $$
with $\phi_\lambda(t)\in H^{s+1}(\mathbb R^2)$ for a.e. $t>0$. 
$$ |\nabla(|v|^\gamma)|\lesssim |\nabla v||v|^{\gamma-1}\le |\nabla \phi_\lambda||v|^{\gamma-1} + |q||\nabla G_\lambda||v|^{\gamma-1}. $$
We want to use the linear estimates from Lemma \ref{l.lin.es.gl.ex.1} and \ref{l.lin.es.gl.ex.2} and therefore we need to control
$$ \||\nabla \phi_\lambda||v|^{\gamma-1}\|_{L^1L^\frac{1}{1-\varepsilon}} + \||\nabla \phi_\lambda||v|^{\gamma-1}\|_{L^rL^\frac{2}{1+\varepsilon}} $$
and 
$$ \||q||\nabla G_\lambda||v|^{\gamma-1}\|_{L^1L^\frac{1}{1-\varepsilon}} + \||q||\nabla G_\lambda||v|^{\gamma-1}\|_{L^rL^\frac{2}{1+\varepsilon}}, $$
for $\delta>0$ sufficiently small.
$$ \||\nabla \phi_\lambda||v|^{\gamma-1}\|_{L^1L^\frac{1}{1-\varepsilon}(\mathbb R^2)}\le \|\nabla \phi_\lambda\|_{L^r L^2}\|v\|_{L^\frac{r(\gamma-1)}{r-1}L^\frac{2(\gamma-1)}{1-2\varepsilon}}^{\gamma-1}. $$
We notice that 
$$ \|\nabla \phi_\lambda\|_{L^rL^2}\lesssim \|v\|_{L^rH^{s+1}_\alpha}. $$
Moreover, 
$$ \frac{r(\gamma-1)}{r-1}>\frac{2(\gamma-1)}{2(\gamma-1)-1+2\varepsilon}=\frac{2(\gamma-1)}{2\gamma-3+2\varepsilon}\:\Leftrightarrow\: r(2\gamma-3+2\varepsilon)>2(r-1). $$
If $\gamma \ge \frac{5}{2}$ the estimate is true. Otherwise we need to take $r$ such that
$$ r<\frac{2}{5-2\gamma-2\varepsilon}. $$
This is possible if and only if
$$ 2<\frac{2}{5-2\gamma -2\varepsilon}\:\Leftrightarrow\: 5-2\gamma-2\varepsilon<1\:\Leftrightarrow\: \gamma>2-\varepsilon. $$
In particular is true for $\gamma\ge 2$. So, if we choose 
$$ (p_1,q_1)=\left(\frac{r(\gamma-1)}{r-1},\frac{2(\gamma-1)}{1-2\varepsilon}\right), $$
we get
$$ \||\nabla \phi_\lambda||v|^{\gamma-1}\|_{L^1L^\frac{1}{1-\varepsilon}(\mathbb R^2)}\le \|\nabla \phi_\lambda\|_{L^r L^2}\|v\|_{L^\frac{r(\gamma-1)}{r-1}L^\frac{2(\gamma-1)}{1-2\varepsilon}}^{\gamma-1}\lesssim \|v\|_Y^\gamma. $$
Similarly, using the fact that $\nabla G_\lambda\in L^h(\mathbb R^2)$ for $h\in[1,2)$ and
$$ |q|\lesssim \|\phi_\lambda\|_{H^{s+1}_\alpha(\mathbb R^2)}\lesssim \|v\|_{H^{s+1}_\alpha(\mathbb R^2)},  $$
we get 
$$ \||q||\nabla G_\lambda||v|^{\gamma-1}\|_{L^1L^\frac{1}{1-\varepsilon}}\le \left\||q|\|\nabla G_\lambda\|_{L^\frac{2}{1+\varepsilon}(\mathbb R^2)}\|v\|_{L^\frac{2(\gamma-1)}{1-3\varepsilon}(\mathbb R^2)}^{\gamma-1}\right\|_{L^1(\mathbb R_+)} $$
$$ \lesssim \left\|\|v\|_{H^{s+1}_\alpha(\mathbb R^2)}\|v\|_{L^\frac{2(\gamma-1)}{1-3\varepsilon}(\mathbb R^2)}^{\gamma -1}\right\|_{L^1(\mathbb R_+)}\le \|v\|_{L^rH^{s+1}_\alpha}\|v\|_{L^\frac{r(\gamma-1)}{r-1}L^\frac{2(\gamma-1)}{1-3\varepsilon}}\le \|v\|_{Y}^\gamma. $$
Therefore, choosing
$$ (p_2,q_2)=\left(\frac{r(\gamma-1)}{r-1},\frac{2(\gamma-1)}{1-3\varepsilon}\right), $$
we have 
\begin{equation}\label{gl.es.1}
    \|\nabla(|v|^\gamma)\|_{L^1L^\frac{1}{1-\varepsilon}}\lesssim \|v\|_Y^\gamma.
\end{equation}
Let us consider now the $L^rL^\frac{2}{1+\varepsilon}$-norm:
$$ \||\nabla \phi_\lambda| |v|^{\gamma-1}\|_{L^rL^\frac{2}{1+\varepsilon}}\le \left\|\|\nabla \phi_\lambda\|_{L^2(\mathbb R^2)} \|v\|_{L^\frac{2(\gamma-1)}{\varepsilon}(\mathbb R^2)}^{\gamma-1}\right\|_{L^r(\mathbb R_+)}. $$
Thanks to the Sobolev embedding $H^1_\alpha(\mathbb R^2)\hookrightarrow L^h(\mathbb R^2)$ from Remark \ref{rem.Sob.emb.}, it holds
$$ \left\|\|\nabla \phi_\lambda\|_{L^2(\mathbb R^2)} \|v\|_{L^\frac{2(\gamma-1)}{\varepsilon}(\mathbb R^2)}^{\gamma-1}\right\|_{L^r(\mathbb R_+)}\lesssim \|v\|_{L^\infty H^1_\alpha}^{\gamma-1}\|\nabla \phi_\lambda\|_{L^rL^2}\lesssim \|v\|_{L^\infty H^1_\alpha}^{\gamma-1}\|v\|_{L^rH^{s+1}_\alpha}\lesssim \|v\|_Y^\gamma. $$
On the other hand
$$ \left\||q||\nabla G_\lambda| |v|^{\gamma-1}\right\|_{L^rL^\frac{2}{1+\varepsilon}}\le \left\||q|\|\nabla G_\lambda\|_{L^\frac{2}{1+\frac{1}{2}\varepsilon}(\mathbb R^2)} \|v\|_{L^\frac{4(\gamma-1)}{\varepsilon}(\mathbb R^2)}^{\gamma-1}\right\|_{L^r(\mathbb R_+)} $$
$$ \lesssim \|v\|_{L^\infty H^1_\alpha}^{\gamma-1}\|v\|_{L^rH^{s+1}_\alpha} \le \|v\|_Y^\gamma. $$
So
\begin{equation}\label{gl.es.2}
    \|\nabla(|v|^\gamma)\|_{L^rL^\frac{2}{1+\varepsilon}}\lesssim \|v\|_Y^\gamma.
\end{equation}
Thanks to \eqref{gl.es.1}, \eqref{gl.es.2} and Proposition \ref{p.gl.lin.es.}, we have that 
\begin{equation}\label{proof.gl.es.3}
    \|\Phi(v)\|_{Y}\le C\left[\|u_0\|_{L^1\cap H^1_\alpha(\mathbb R^2)} + \|v\|^\gamma_Y\right],
\end{equation}
for some $C>0$, and similarly
\begin{equation}\label{proof.gl.es.4}
    \|\Phi(v_1)-\Phi(v_2)\|_{Y}\le  M\left(\|v_1\|_{Y}^{\gamma-1} + \|v_2\|_{Y}^{\gamma-1}\right)\|v_1-v_2\|_{Y},
\end{equation}
for some $M>0$. Let now
$$ Z_\varepsilon=\{v\in Y\mid \|v\|_{Y}\le 2C\varepsilon\}. $$
We want to prove that $\Phi\colon Z_\varepsilon\to Z_\varepsilon$, where we recall that 
$$ \|u_0\|_{L^1\cap H^1_\alpha(\mathbb R^2)}\le \varepsilon. $$
With this hypothesis, we know from \eqref{proof.gl.es.3} that for any $v\in Z_\varepsilon$
$$ \|\Phi(v)\|_{Y}\le C\left[\|u_0\|_{H^s_\alpha(\mathbb R^2)} + \|v\|_Y^\gamma\right]\le C\varepsilon(1+2^\gamma C^\gamma\varepsilon^{\gamma-1}). $$
So, if we consider $\varepsilon>0$ sufficiently small such that 
$$ 1+2^\gamma C^\gamma\varepsilon^{\gamma-1}\le 2, $$
then $\Phi\colon Z_\varepsilon\to Z_\varepsilon$. Moreover, from \eqref{proof.gl.es.4}
$$ \|\Phi(v_1)-\Phi(v_2)\|_{Y}\le  M\left(\|v_1\|_{Y}^{\gamma-1} + \|v_2\|_{Y}^{\gamma-1}\right)\|v_1-v_2\|_{Y}\le 2^{\gamma}C^{\gamma-1}M\varepsilon^{\gamma-1}\|v_1-v_2\|_Y, $$
for any $v_1,v_2\in Z_\varepsilon$, so choosing $\varepsilon>0$ sufficiently small $\Phi\colon Z_\varepsilon\to Z_\varepsilon$ is a contraction. So, there is a unique solution in $Z_\varepsilon$ and, similarly as for the local existence, it can be seen that the solution is unique in $Y$. As for the local existence, it can be proved that 
$$ u\in L^rH^{s+1}\cap L^pL^q, $$
for any $s\in\left(0,\frac{2}{r}\right)$ and for any $p,q\in(1,\infty)$ such that
$$ \frac{1}{p} + \frac{1}{q} < 1. $$
This proves Theorem \ref{t.ex.gl.m.}. Finally, thanks to Proposition \ref{p.ex.Lagr.sys.}, we can find $\rho(t)$ which satisfies
$$ \|\rho\|_{L^1\cap L^r}\lesssim \|a\cdot \nabla(|u|^\gamma)\|_{L^1L^\frac{1}{1-\varepsilon}\cap L^rL^\frac{2}{1+\varepsilon}}\lesssim \|u\|_Y^\gamma, $$
where we used \eqref{gl.es.1} and \eqref{gl.es.2}.
\end{proof}

\subsection{Decay Estimates and Proof of Theorem \ref{t.decay.m.}}

We have just proved in the previous section Theorem \ref{t.ex.gl.m.2}, which ensures the existence of a solution $(u,\rho)$ for  
$$ \left\{\begin{array}{ll}
    (\partial_t-\Delta_\alpha)u + \rho\psi_\alpha=a\cdot\nabla(|u|^\gamma) & \mathbb R_+\times\mathbb R^2 \\
    P_{ac}u=u & \mathbb R_+\times \mathbb R^2 \\
    u(0)=P_{ac}u_0 & \mathbb R^2.
\end{array}\right. $$
Finally, to prove Theorem \ref{t.decay.m.}, it remains to show that such a solution decays in time. Let $h_1,h_2\in(1,\infty)$ and $\delta>0$, then we define the space
\begin{equation}\label{def.D}
  D_{h_1,h_2,\delta}=\left\{v\in L^\infty\left(\mathbb R_+;H^1_\alpha\left(\mathbb R^2\right)\right)\:\Big|\: |v|_{D_{h_1,h_2,\delta}}<+\infty\right\},  
\end{equation}
where
$$ |v|_{D_{h_1,h_2,\delta}}=\sup_{t\ge 1}t^{1-\frac{1}{h_1}-\delta}\|v(t)\|_{L^{h_1}(\mathbb R^2)} + \sup_{t\ge 1}t^{\frac{3}{2}-\frac{1}{h_2}-\delta}\|\nabla v(t)\|_{L^{h_2}(\mathbb R^2)}. $$
To be noticed that, for any $v\in D_{h_1,h_2,\delta}$, it holds 
$$ \|v(t)\|_{L^{h_1}(\mathbb R^2)}\le |v|_{D_{h_1,h_2,\delta}}t^{-1+\frac{1}{h_1}+\delta},\quad \|\nabla v(t)\|_{L^{h_2}(\mathbb R^2)}\le |v|_{D_{h_1,h_2,\delta}}t^{-\frac{3}{2}+\frac{1}{h_2}+\delta}. $$
In particular, if we manage to the prove that our solutions belong to $D_{h_1,h_2,\delta}$ for $\delta=\delta(h_1,h_2)>0$ sufficiently small, then we get also a polynomial decay in time for the functions. In order to do so, let us consider the Duhamel formula correspondent to the solution of the system \eqref{m.sys.}:
$$ u(t)=e^{P_{ac}\Delta_\alpha t}u_0 + \int_0^t e^{P_{ac}\Delta_\alpha (t-\tau)} a\cdot \nabla(|u(\tau)|^\gamma)d\tau. $$
Our goal is to bound each term of the Duhamel formula in $|\cdot|_{D_{h_1,h_2,\delta}}$:
\begin{lem}\label{l.dec.es.1}
Let $u_0\in L^1\cap L^2$, then for any $q\in(1,\infty)$ and $p\in(1,2)$ there exists $\delta_0=\delta_0(p,q)$ such that
$$ \left|e^{P_{ac}\Delta_\alpha t}u_0\right|_{D_{q,p,\delta}}\lesssim \|u_0\|_{L^1\cap L^2} \quad \forall \delta\in(0,\delta_0). $$
\end{lem}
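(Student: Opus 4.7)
The plan is to reduce both pieces of the norm $|\,\cdot\,|_{D_{q,p,\delta}}$ to the semigroup decay bounds already established in Theorem \ref{t.expDelta alpha} and Theorem \ref{t.sem-grad.es.}, at the cost of replacing the $L^1$ input by the slightly larger space $L^{q_0}$ with $q_0 := 1/(1-\delta)$. This shift is exactly what allows the strict inequalities $q_0<q$ and $q_0<p$ demanded by those two theorems to hold.

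First I would use identity \eqref{eq.exp=} from Proposition \ref{prop.eq res=} to rewrite $e^{P_{ac}\Delta_\alpha t}u_0$ as $e^{\Delta_\alpha t}P_{ac}u_0$ (the $P_d$-component, if any, is stationary under $e^{P_{ac}\Delta_\alpha t}$ and hence cannot contribute to the decaying norm, so this is essentially the only piece that matters). Since $P_{ac}$ is bounded on $L^{q_0}(\mathbb R^2)$ for every $q_0\in(1,\infty)$, I have $\|P_{ac}u_0\|_{L^{q_0}}\le\|u_0\|_{L^{q_0}}$, and Riesz–Thorin interpolation between $L^1$ and $L^2$ gives
\[
\|u_0\|_{L^{q_0}(\mathbb R^2)}\le \|u_0\|_{L^1}^{1-2\delta}\|u_0\|_{L^2}^{2\delta}\lesssim \|u_0\|_{L^1\cap L^2},
\]
valid as long as $\delta\le 1/2$.

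Next I apply Theorem \ref{t.expDelta alpha} with source exponent $q_0$ and destination $q$ — this requires $1<q_0<q$, equivalently $\delta<1-1/q$ — to obtain, for $t\ge 1$,
\[
\|e^{\Delta_\alpha t}P_{ac}u_0\|_{L^q(\mathbb R^2)}\lesssim t^{-((1-\delta)-1/q)}\|P_{ac}u_0\|_{L^{q_0}}=t^{-(1-1/q-\delta)}\|u_0\|_{L^1\cap L^2},
\]
which handles the first supremum in $|\,\cdot\,|_{D_{q,p,\delta}}$. Symmetrically, Theorem \ref{t.sem-grad.es.} with the same source $q_0$ and destination $p$ — requiring $1<q_0<p<2$, i.e.\ $p<2$ (given) and $\delta<1-1/p$ — yields
\[
\|\nabla e^{\Delta_\alpha t}P_{ac}u_0\|_{L^p(\mathbb R^2)}\lesssim t^{-\frac12-((1-\delta)-1/p)}\|P_{ac}u_0\|_{L^{q_0}}=t^{-(3/2-1/p-\delta)}\|u_0\|_{L^1\cap L^2},
\]
controlling the second supremum. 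Setting $\delta_0:=\min\{\tfrac12,\,1-1/q,\,1-1/p\}>0$ ensures all constraints are met simultaneously for every $\delta\in(0,\delta_0)$.

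The main obstacle is the gradient term. Theorem \ref{t.sem-grad.es.} is only available in the window $1<q_0<p<2$, which is precisely why the lemma requires $p<2$ and why one cannot reach the endpoint $q_0=1$: one is forced to pay a genuine $\delta$ in the source exponent, and the rest of the proof is tuned around this. A minor bookkeeping point is to verify that the same $q_0=1/(1-\delta)$ works for both estimates, which is clear since the two conditions $\delta<1-1/q$ and $\delta<1-1/p$ are both automatic for $\delta$ sufficiently small whenever $q,p>1$.
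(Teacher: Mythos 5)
Your argument is correct and is essentially the paper's own proof: both reduce to the semigroup bounds of Theorem \ref{t.expDelta alpha} and Theorem \ref{t.sem-grad.es.} with source exponent $\tfrac{1}{1-\delta}$ and control $\|u_0\|_{L^{1/(1-\delta)}}$ by interpolation between $L^1$ and $L^2$, with the same smallness constraints on $\delta$. The only caveat is your dismissal of the $P_d$-component: a stationary nonzero $P_du_0$ would actually make $|\cdot|_{D_{q,p,\delta}}$ infinite rather than "not contribute", so, exactly like the paper, you are implicitly using that the lemma is applied to data with $P_du_0=0$ (i.e.\ to $P_{ac}u_0$, as in the Duhamel formula for Theorem \ref{t.decay.m.}).
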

\begin{proof}\hfill\\
By Theorem \ref{t.expDelta alpha}
$$ \|e^{P_{ac}\Delta_\alpha t}u_0\|_{L^q(\mathbb R^2)}\lesssim t^{-1+\delta+\frac{1}{q}}\|u_0\|_{L^\frac{1}{1-\delta}(\mathbb R^2)}\lesssim t^{-1+\delta+\frac{1}{q}}\|u_0\|_{L^1\cap L^2(\mathbb R^2)}. $$
Thanks to Theorem \ref{t.sem-grad.es.}, we can do the same to bound the gradient.
\end{proof}
For the inhomogeneous term, we need a technical lemma:
\begin{lem}\label{l.tec.}
Let $\alpha<1$ and $\beta\in\mathbb R$, then 
$$ \int_1^t (t-\tau)^{-\alpha}\tau^{-\beta}d\tau\lesssim t^{1-\alpha-\beta} \quad \forall t\ge 1. $$
\end{lem}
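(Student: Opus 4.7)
The plan is a standard splitting argument for convolution integrals of beta-function type, separating the interval at the midpoint $t/2$:
\[
\int_1^t (t-\tau)^{-\alpha}\tau^{-\beta}\,d\tau = \int_1^{t/2} (t-\tau)^{-\alpha}\tau^{-\beta}\,d\tau + \int_{t/2}^{t} (t-\tau)^{-\alpha}\tau^{-\beta}\,d\tau.
\]
On each sub-interval one of the two factors is essentially constant in $\tau$ (comparable to a fixed power of $t$) and can be pulled out, leaving an elementary one-variable integral of the remaining factor.

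First I would handle the contribution on $[1, t/2]$. There $t - \tau \geq t/2$, hence $(t-\tau)^{-\alpha} \lesssim t^{-\alpha}$, and one is reduced to estimating $\int_1^{t/2} \tau^{-\beta}\,d\tau$, which by direct computation is bounded by $C\,t^{1-\beta}$. Thus this piece contributes $\lesssim t^{-\alpha}\cdot t^{1-\beta} = t^{1-\alpha-\beta}$, as desired.

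Second I would handle the contribution on $[t/2, t]$. There $\tau$ is comparable to $t$, so $\tau^{-\beta} \lesssim t^{-\beta}$ (using $\tau \geq t/2$ when $\beta \geq 0$ and $\tau \leq t$ when $\beta < 0$). Pulling this factor out reduces matters to $\int_{t/2}^{t} (t-\tau)^{-\alpha}\,d\tau$, which is an integrable singularity precisely because of the hypothesis $\alpha < 1$ and evaluates to $\lesssim t^{1-\alpha}$. Hence this piece also contributes $\lesssim t^{1-\alpha-\beta}$. Summing the two contributions yields the claim.

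The main delicate point is the borderline case $\beta = 1$ in the first piece, where the elementary integral produces a logarithm $\log(t/2)$ rather than a pure power; this is handled by absorbing the logarithm using $\log(t/2) \lesssim t^{\varepsilon}$ for any small $\varepsilon > 0$. As a conceptual alternative that makes the dependence on the exponents transparent, one can perform the scaling change of variable $\tau = ts$, reducing the problem to
\[
\int_1^t (t-\tau)^{-\alpha}\tau^{-\beta}\,d\tau = t^{1-\alpha-\beta}\int_{1/t}^{1} (1-s)^{-\alpha}s^{-\beta}\,ds,
\]
so that the stated bound amounts to the uniform boundedness (for $t\ge 1$) of the $s$-integral, a property that is clear from $\alpha < 1$ and the same splitting of $(0,1)$ into $(0,1/2)$ and $(1/2,1)$.
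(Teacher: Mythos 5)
Your argument is the same midpoint splitting the paper itself uses: on $[1,t/2]$ freeze $(t-\tau)^{-\alpha}\lesssim t^{-\alpha}$, on $[t/2,t]$ freeze $\tau^{-\beta}\lesssim t^{-\beta}$, and integrate the remaining factor, the hypothesis $\alpha<1$ entering only through the integrability of $(t-\tau)^{-\alpha}$ at $\tau=t$. One small point: for $1\le t<2$ the split point $t/2$ lies below the lower limit $1$, which is why the paper treats $t\in[1,2]$ separately by a direct bound; your splitting needs the same adjustment (split at $\max\{1,t/2\}$), although your scaling alternative $\tau=ts$ covers all $t\ge1$ at once.

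The more substantive issue is the range of $\beta$. Your step $\int_1^{t/2}\tau^{-\beta}\,d\tau\lesssim t^{1-\beta}$ is valid only for $\beta<1$. At $\beta=1$ you correctly see a logarithm, but absorbing it via $\log(t/2)\lesssim t^{\varepsilon}$ proves only the weaker bound $t^{1-\alpha-\beta+\varepsilon}$, not the stated estimate; and for $\beta>1$ the first piece is of size $t^{-\alpha}$ (since $\int_1^{t/2}\tau^{-\beta}\,d\tau$ converges to a constant), so the inequality as stated actually fails: for $\alpha=0$, $\beta=2$ the left-hand side tends to $1$ while $t^{1-\alpha-\beta}=t^{-1}\to 0$. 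The same restriction also affects your scaling alternative, where the uniform boundedness of $\int_{1/t}^{1}(1-s)^{-\alpha}s^{-\beta}\,ds$ in $t\ge1$ again requires $\beta<1$, not just $\alpha<1$. To be fair, the paper's own proof carries the identical implicit assumption (it writes $\int_1^{t/2}\tau^{-\beta}\,d\tau\simeq t^{1-\beta}$), so in effect both arguments establish the lemma for $\beta<1$; in that range your proof is complete and coincides with the paper's. But the case $\beta\ge1$ cannot be repaired by an $\varepsilon$-loss, because the statement as written (for all $\beta\in\mathbb R$) is false there; the honest conclusion is that the hypothesis should read $\beta<1$ (or the bound should be replaced by $t^{1-\alpha-\beta}+t^{-\alpha}$).
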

\begin{proof}\hfill\\
If $t\le 2$
$$ \int_1^t (t-\tau)^{-\alpha}\tau^{-\beta}d\tau \le \int_1^t (t-\tau)^{-\alpha}d\tau=\frac{1}{1-\alpha}(t-1)^{1-\alpha}\lesssim 1, $$
which is sufficient since $1\le t\le 2$. Let us suppose now $t>2$, then we split the integral in two pieces:
$$ \int_1^t (t-\tau)^{-\alpha}\tau^{-\beta}d\tau= \int_1^{t/2} (t-\tau)^{-\alpha}\tau^{-\beta}d\tau + \int_{t/2}^t (t-\tau)^{-\alpha}\tau^{-\beta}d\tau. $$
So
$$ \int_1^{t/2}(t-\tau)^{-\alpha}\tau^{-\beta}d\tau\lesssim t^{-\alpha}\int_1^{t/2}\tau^{-\beta}d\tau\simeq t^{1-\alpha-\beta}, $$
$$ \int_{t/2}^t(t-\tau)^{-\alpha}\tau^{-\beta}d\tau \lesssim t^{-\beta}\int_{t/2}^t(t-\tau)^{-\alpha}d\tau\simeq t^{1-\alpha-\beta}. $$
\end{proof}
\begin{lem}\label{l.dec.es.2}
Let $\gamma\ge 2$, $a\in\mathbb R^2$ and let $q\in(1,\infty)$,  $p\in(1,2)$ such that there are $\theta_1,\theta_2\in[0,1]$ with
$$ \theta_1+\theta_2>\frac{3}{2},\quad \max\left\{\frac{1}{p};\frac{1}{q}\right\}<\frac{\theta_1}{q}+\frac{\theta_2}{p}+\frac{1-\theta_2}{2}<1, $$
then there is $\delta_0>0$ sufficiently small such that, for any $\delta<\delta_0$ and for any $v\in D_{q,p,\delta}$
it holds
$$ \left|\int_0^te^{P_{ac}\Delta_\alpha (t-\tau)}a\cdot \nabla (|v(\tau)|^\gamma)d\tau\right|_{D_{q,p,\delta}}\lesssim \|v\|_{L^\infty H^1_\alpha}^{\gamma} + |v|_{D_{q,p,\delta}}^\gamma. $$
\end{lem}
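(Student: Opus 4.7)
The plan is to bound the Duhamel integral $w(t)=\int_0^t e^{P_{ac}\Delta_\alpha(t-\tau)}a\cdot\nabla(|v|^\gamma)(\tau)\,d\tau$ in each of the two components of the $D_{q,p,\delta}$ seminorm, namely the weighted $L^q$-norm of $w(t)$ and the weighted $L^p$-norm of $\nabla w(t)$. The two tools are the semigroup decay estimates from Theorem \ref{t.expDelta alpha} (for the $L^q$ output) and Theorem \ref{t.sem-grad.es.} (for the $L^p$ gradient output), combined with a H\"older/interpolation estimate on the pointwise bound $|\nabla(|v|^\gamma)|\lesssim |v|^{\gamma-1}|\nabla v|$, and finally the time-convolution Lemma \ref{l.tec.}. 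For $t\ge1$, I would split $\int_0^t=\int_0^1+\int_1^t$ so that the short-time piece is controlled by $\|v\|_{L^\infty H^1_\alpha}$ while the long-time piece is controlled by $|v|_{D_{q,p,\delta}}$.

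The short-time piece is the easier one. Using the Sobolev embedding $H^1_\alpha(\mathbb{R}^2)\hookrightarrow L^r(\mathbb{R}^2)$ for every finite $r$ (Remark \ref{rem.Sob.emb.}), H\"older with $L^2$ on the gradient gives $\|\nabla(|v|^\gamma)(\tau)\|_{L^{1/(1-\sigma)}}\lesssim \|v\|_{L^\infty H^1_\alpha}^\gamma$ uniformly in $\tau\in(0,1)$ for some small $\sigma>0$. Since $t-\tau\ge t/2$ for $\tau\in[0,1]$ and $t\ge2$, the semigroup estimate gives a factor bounded by $(t/2)^{-(1-1/q-\delta)}$; the case $t\in[1,2]$ is handled by the same short-time argument as in Lemma \ref{l.lin.es.loc.ex.2}. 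The $\nabla w$ estimate is parallel using Theorem \ref{t.sem-grad.es.}, and both contributions are $\lesssim \|v\|_{L^\infty H^1_\alpha}^\gamma$ in the appropriate weighted norm.

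For the long-time piece $\int_1^t$, I would apply H\"older with $1/h=(\gamma-1)/s + 1/b$, then interpolate $\|v\|_{L^s}$ between $L^q$ (with the decay $\|v(\tau)\|_{L^q}\le |v|_{D_{q,p,\delta}}\tau^{-1+1/q+\delta}$) and some $L^r$ for $r$ large controlled by $\|v\|_{L^\infty H^1_\alpha}$ via Sobolev, and interpolate $\|\nabla v\|_{L^b}$ between $L^p$ (decay $\tau^{-3/2+1/p+\delta}$) and $L^2$ (bounded by $\|v\|_{L^\infty H^1_\alpha}$). Writing the interpolation exponents as $\theta_1$ and $\theta_2$ and sending $r\to\infty$, the resulting $1/h$ matches $\theta_1/q+\theta_2/p+(1-\theta_2)/2$. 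Theorem \ref{t.expDelta alpha} contributes the factor $(t-\tau)^{-(1/h-1/q)}$; the hypothesis $\theta_1/q+\theta_2/p+(1-\theta_2)/2<1$ makes the exponent $\alpha=1/h-1/q$ satisfy $\alpha<1$ so that Lemma \ref{l.tec.} applies, while the joint condition $\theta_1+\theta_2>3/2$ together with $1/h>1/q$ produces $\alpha+\beta\ge 2-1/q-\delta$ for the $\tau$-decay exponent $\beta=\theta_1(1-1/q-\delta)+\theta_2(3/2-1/p-\delta)$, giving the required $t^{-(1-1/q-\delta)}$. The $\nabla w$ side is parallel using Theorem \ref{t.sem-grad.es.} with kernel $(t-\tau)^{-1/2-(1/h-1/p)}$, and the condition $1/h>1/p$ combined with $\theta_1+\theta_2>3/2$ yields the matching exponent $5/2-1/p-\delta$.

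The main obstacle is bookkeeping: one must choose the interpolation parameters and the auxiliary Sobolev exponent $r$ so that all of the following hold simultaneously, and remain compatible as $\delta\to0$: the H\"older balance $1/h=(\gamma-1)/s+1/b$, the semigroup admissibility $1<h<q$ in Theorem \ref{t.expDelta alpha} and $1<h<p<2$ in Theorem \ref{t.sem-grad.es.}, the borderline condition $\alpha<1$ of Lemma \ref{l.tec.}, and the final total-exponent inequalities that force the right decay in $t$. The open strict-inequality window encoded by the hypothesis $\max\{1/p,1/q\}<\theta_1/q+\theta_2/p+(1-\theta_2)/2<1$ together with $\theta_1+\theta_2>3/2$ is precisely what guarantees that this window is nonempty, and by continuity one can then extract a $\delta_0>0$ for which all the estimates close up, yielding the claimed bound $\lesssim \|v\|_{L^\infty H^1_\alpha}^\gamma+|v|_{D_{q,p,\delta}}^\gamma$.
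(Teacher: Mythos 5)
Your proposal follows essentially the same route as the paper: split the Duhamel integral at $\tau=1$, control the short-time piece by $\|v\|_{L^\infty H^1_\alpha}^\gamma$ via H\"older and the Sobolev embedding of $H^1_\alpha$, and control the long-time piece by H\"older/interpolation of $|v|^{\gamma-1}|\nabla v|$ with weights $\theta_1,\theta_2$ (the paper writes this as a four-factor H\"older with an auxiliary large exponent playing the role of your $r\to\infty$ slack), then conclude with Theorems \ref{t.expDelta alpha}, \ref{t.sem-grad.es.} and Lemma \ref{l.tec.}; your exponent bookkeeping matches the paper's. This is the same argument, so no further comparison is needed.
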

\begin{proof}\hfill\\
We recall from the definition of $D_{q,p,\delta}$ in \eqref{def.D}, that $t\ge 1$. Then
$$ \int_0^t e^{P_{ac}\Delta_\alpha (t-\tau)} a\cdot \nabla(|v(\tau)|^\gamma)d\tau= I_1(t) + I_2(t), $$
where
$$ I_1(t) = \int_0^1 e^{P_{ac}\Delta_\alpha (t-\tau)} a\cdot\nabla(|v(\tau)|^\gamma)d\tau, $$
$$ I_2(t)= \int_1^t e^{P_{ac}\Delta_\alpha (t-\tau)} a\cdot \nabla(|v(\tau)|^\gamma)d\tau. $$
Let us start from $I_1(t)$: from Theorem \ref{t.expDelta alpha}
$$ \|I_1(t)\|_{L^q(\mathbb R^2)}\lesssim \int_0^1 (t-\tau)^{-1+\delta + \frac{1}{q}}\|v(\tau)^{\gamma-1}\nabla v(\tau)\|_{L^\frac{1}{1-\delta}(\mathbb R^2)}d\tau $$
$$ \le \int_0^1 (t-\tau)^{-1+\delta + \frac{1}{q}}\|v(\tau)\|_{L^\frac{2(\gamma-1)}{1-2\delta}(\mathbb R^2)}^{\gamma-1}\|\nabla v(\tau)\|_{L^2(\mathbb R^2)}d\tau \lesssim \|v\|_{L^\infty H^1_\alpha}^\gamma\int_0^1(t-\tau)^{-1+\delta+\frac{1}{q}}d\tau,  $$
where in the last inequality we used the Sobolev embedding $H^1_\alpha(\mathbb R^2)\hookrightarrow L^r(\mathbb R^2) $ for any $r\in[2,\infty)$. So
$$ \sup_{t\ge 1}\|I_1(t)\|_{L^q(\mathbb R^2)}\lesssim \|v\|_{L^\infty H^1_\alpha}^\gamma \sup_{t\ge 1}t^{1-\frac{1}{q}-\delta}\left[t^{\frac{1}{q}+\delta}-(t-1)^{\frac{1}{q}+\delta}\right]\lesssim \|v\|_{L^\infty H^1_\alpha}^\gamma. $$
Similarly, by Theorem \ref{t.sem-grad.es.} it holds
$$ \|\nabla I_1(t)\|_{L^p(\mathbb R^2)}\le \int_0^1 (t-\tau)^{-\frac{3}{2}+\delta + \frac{1}{p}}\|v(\tau)^{\gamma-1}\nabla v(\tau)\|_{L^\frac{1}{1-\delta}(\mathbb R^2)}d\tau $$
$$ \lesssim \|v\|_{L^\infty H^1_\alpha}^\gamma\int_0^1(t-\tau)^{-\frac{3}{2}+\delta+\frac{1}{p}}d\tau.  $$
So, since $\frac{3}{2}-\delta-\frac{1}{p}<1$ for $p<2$, we get
$$ |I_1|_{D_{q,p,\delta} }\lesssim \|v\|_{L^\infty H^1_\alpha}^\gamma. $$
Let us pass to $I_2(t)$: thanks to the condition on $p,q$, we can find $\theta_1,\theta_2$ and $h>1$ sufficiently large such that 
$$ \|I_2(t)\|_{L^q}\lesssim \int_1^t (t-\tau)^{-\frac{1}{\ell}+\frac{1}{q}}\|v(\tau)^{\gamma-1}\nabla v(\tau)\|_{L^\ell}d\tau,  $$
for 
$$ \frac{1}{\ell}=\frac{\theta_1}{q} + \frac{\theta_2}{p}+\frac{1-\theta_2}{2} + \frac{1}{h}\in\left(\frac{1}{q},1\right). $$
We notice that 
$$ u(\tau)^{\gamma-1}\nabla u(\tau)=u(\tau)^{\theta_1}u(\tau)^{\gamma-1-\theta_1}\nabla u(\tau)^{\theta_2}\nabla u(\tau)^{1-\theta_2}, $$
so by H\"older inequality and Sobolev embeddings
$$ \|u(\tau)^{\gamma-1}\nabla u(\tau)\|_{L^\ell(\mathbb R^2)}\lesssim \|u(\tau)\|_{L^q(\mathbb R^2)}^{\theta_1}\|u\|_{L^{\widetilde \ell}(\mathbb R^2)}^{\gamma-1-\theta_1}\|\nabla u(\tau)\|_{L^p}^{\theta_2}\|\nabla u(\tau)\|_{L^2}^{1-\theta_2} $$
$$ \le\tau^{-\theta_1\left(1-\frac{1}{q}+\delta\right)-\theta_2\left(\frac{3}{2}-\frac{1}{p}\right)}|u|_{D_{q,p,\delta}}^{\theta_1+\theta_2}\|u\|_{L^\infty H^1_\alpha}^{\gamma-\theta_1-\theta_2}. $$
Therefore, if we call
$$ \|u\|_Y=\|u\|_{L^\infty H^1_\alpha} + |u|_{D_{q,p,\delta}}, $$
we get
$$ \|I_2(t)\|_{L^q(\mathbb R^2)}\lesssim \|u\|_Y^\gamma\int_1^t(t-\tau)^{-\frac{1}{\ell}+\frac{1}{q}}\tau^{-\theta_1\left(1-\frac{1}{q}+\delta\right)-\theta_2\left(\frac{3}{2}-\frac{1}{p}-\delta\right)}d\tau. $$
So, by Lemma \ref{l.tec.}, we get 
$$ \|I_2(t)\|_{L^q}\lesssim t^{\frac{1}{2}-\frac{1}{h}+\frac{1}{q}-(1-\delta)(\theta_1+\theta_2)}\left[\|u\|_{L^\infty H^1_\alpha}^\gamma + |u|_{D_{q,p,\delta}}^\gamma\right]. $$
In particular, choosing $\delta$ sufficiently small, $h$ sufficiently large and thanks to the condition $\theta_1+\theta_2>\frac{3}{2}$, we get 
$$ \|I_2(t)\|_{L^q(\mathbb R^2)}\lesssim t^{-1+\frac{1}{q}+\delta}\left[\|u\|_{L^\infty H^1_\alpha}^\gamma + |u|_{D_{q,p,\delta}}^\gamma\right]. $$
Similarly, thanks to the hypothesis, we can find $\ell>1$ such that
$$ \frac{1}{\ell}=\frac{\theta_1}{q} + \frac{\theta_2}{p}+\frac{1-\theta_2}{2} + \frac{1}{h}\in\left(\frac{1}{p},1\right). $$
Then
$$ \|\nabla I_2(t)\|_{L^p(\mathbb R^2)}\lesssim \int_1^t(t-\tau)^{-\frac{1}{2}-\frac{1}{\ell}+\frac{1}{p}}\|u(\tau)^{\gamma-1}\nabla u(\tau)\|_{L^\ell(\mathbb R^2)}d\tau $$
$$ \lesssim \|u\|_Y^\gamma\int_1^t (t-\tau)^{-\frac{1}{2}-\frac{1}{\ell}+\frac{1}{p}}\tau^{-\theta_1\left(1-\frac{1}{q}+\delta\right)-\theta_2\left(\frac{3}{2}-\frac{1}{p}-\delta\right)}d\tau 
 $$
 $$ \lesssim t^{-\frac{1}{h}+\frac{1}{p}-(1-\delta)(\theta_1+\theta_2)}\|u\|_Y^\gamma\le t^{-\frac{3}{2}+\frac{1}{p}}\|u\|_Y^\gamma, $$
choosing $\delta>0$ sufficiently small and $h$ sufficiently large.
\end{proof}
We are finally ready to prove the decay in time of the solution:
\begin{proof}[Proof of Theorem \ref{t.decay.m.}]\hfill\\
Let as before
$$ X= L^\infty H^1_\alpha\cap L^r H^{s+1}_\alpha \cap L^{p_1}L^{q_1}\cap L^{p_2}L^{q_2}, $$
for $p_1,p_2\in (1,\infty)$ and $(q_1,q_2)\in[2,\infty)$ such that 
$$ \frac{1}{p_i} + \frac{1}{q_i} < 1\quad i=1,2, $$
to be chosen later. In this case, we consider
$$ Y=\left\{v\in X\:\Big|\: v(0)=v_0,\quad |v|_{D_{h_1,h_2,\varepsilon}}<+\infty\right\}, $$
endowed with the norm
$$ \|v\|_{Y}=\|v\|_X + |v|_{D_{h_1,h_2,\varepsilon}}, $$
where
$$ \|v\|_X= \|v\|_{L^\infty_TH^s_\alpha} + \|v\|_{L^2_TH^{s+1}_\alpha} + \|v\|_{L^{p_1}L^{q_1}} + \|v\|_{L^{p_2}L^{q_2}}. $$
Let $\Phi\colon X\to X$ where $\Phi(v)=u$ with $u$ solution of
$$ \left\{\begin{array}{ll}
    (\partial_t-P_{ac}\Delta_\alpha)u=P_{ac}(a\cdot\nabla) (|v|^\gamma) & \mathbb R_+\times\mathbb R^2 \\
    P_{ac}u=u & \mathbb R_+\times \mathbb R^2 \\
    u(0)=P_{ac}u_0 & \mathbb R^2. 
\end{array}\right. $$
Thanks to Lemma \ref{l.dec.es.1} and \ref{l.dec.es.2}
\begin{equation}\label{proof.dec.es.1}
    |\Phi(v)|_{D_{h_1,h_2,\varepsilon}}\lesssim \|u_0\|_{L^1\cap L^2(\mathbb R^2)} + \|v\|_Y^\gamma.
\end{equation}
So, using also \eqref{proof.gl.es.4}, we get
\begin{equation}\label{proof.gl.dec.2}
    \|\Phi(v)\|_{Y}\le C\left[\|u_0\|_{L^1\cap H^1_\alpha(\mathbb R^2)} + \|v\|^\gamma_{Y}\right],
\end{equation}
and similarly
\begin{equation}\label{proof.gl.dec.3}
    \|\Phi(v_1)-\Phi(v_2)\|_{Y}\le  M\left(\|v_1\|_{Y}^{\gamma-1} + \|v_2\|_{Y}^{\gamma-1}\right)\|v_1-v_2\|_{Y},
\end{equation}
for some $C,M>0$. As before, we deduce that $\Phi$ admits a fixed point $u$ in $Y$, which is consequently a solution for the system \eqref{sys.gl.}. Finally, as we did in the proof of Lemma \ref{l.dec.es.1}, it can be seen that 
$$ |\rho(t)|\le \|a\cdot\nabla|u(t)|^\gamma\|_{L^\frac{1}{1-\varepsilon}(\mathbb R^2)}\lesssim t^{-\theta_1\left(1-\frac{1}{h_1}-\varepsilon\right)-\theta_2\left(\frac{3}{2}-\frac{1}{h_2}-\varepsilon\right)}.
$$
We notice that 
$$ -\theta_1\left(1-\frac{1}{h_1}-\varepsilon\right)-\theta_2\left(\frac{3}{2}-\frac{1}{h_2}-\varepsilon\right) $$
$$ = -\frac{1}{2}+\frac{\theta_1}{h_1}+\frac{\theta_2}{h_2}+\frac{1-\theta_2}{2}-(1-\varepsilon)(\theta_1+\theta_2) <\frac{1}{2}-\theta_1-\theta_2 + o(\varepsilon), $$
where we used the conditions of $\theta_1,\theta_2,p,q$. In particular, since $\theta_1+\theta_2>\frac{3}{2}$, choosing $\varepsilon$ sufficiently small we conclude.
\end{proof}

\vspace{2mm}

\textbf{Acknowledgements:} The authors were partially supported by INDAM, GNAMPA group. D.B. is supported by the project E53D23005450006 “Nonlinear dispersive equations in presence of singularities” - funded by European Union - Next Generation EU within the PRIN 2022 program (D.D. 104 - 02/02/2022 Ministero dell’Università e della Ricerca). This manuscript reflects only the author’s views and opinions and the Ministry cannot be considered responsible for them.

\clearpage
\bibliographystyle{plain}
 \bibliography{BM}

\end{document}